\newcommand{\ignore}[1]{}
\newcommand{\hide}[1]{}
\DeclareMathOperator{\Der}{Der}
\newcommand{\N}{\mathbb N}
\newcommand{\Z}[0]{\mathbb Z}
\newtheorem{dummy}{Dummy}
\newtheorem{lemma}[dummy]{Lemma}
\newtheorem{thm}[dummy]{Theorem}
\newtheorem{prop}[dummy]{Proposition}
\theoremstyle{definition}
\newtheorem{definition}[dummy]{Definition}
\theoremstyle{remark}
\newtheorem*{rem*}{Remark to ourselves}
\begin{document}

\bibliographystyle{alpha}

\title{Graded Lie algebras of maximal class of type $n$}

\author{Sandro Mattarei}
\address{Charlotte Scott Research Centre for Algebra\\
University of Lincoln \\
Brayford Pool
Lincoln, LN6 7TS\\
United Kingdom}
\email{smattarei@lincoln.ac.uk}

\author{Simone Ugolini}
\address{Dipartimento di Matematica\\
Universit\`a degli Studi di Trento\\
via Sommarive 14\\
I-38123 Povo (Trento)\\
Italy}
\email{s.ugolini@unitn.it}

\subjclass[2010]{17B70 (Primary), 17B65, 17B05 (Secondary)}

\keywords{Modular Lie algebra; graded Lie algebra; Lie algebra of maximal class}

\begin{abstract}
Let $n>1$ be an integer.
The algebras of the title, which we abbreviate as {\em algebras of type $n$,}
are infinite-dimensional graded Lie algebras
$
L= \bigoplus_{i=1}^{\infty}L_i,
$
which are generated by an element of degree $1$ and an element of degree $n$, and satisfy
$[L_i,L_1]=L_{i+1}$ for $i\ge n$.
Algebras of  type $2$ were classified
by Caranti and Vaughan-Lee in 2000 over any field of odd characteristic.
In this paper we lay the foundations for a classification
of algebras of arbitrary type $n$,
over fields of sufficiently large characteristic relative to $n$.
Our main result describes precisely all possibilities
for the first constituent length
of an algebra of type $n$, which is a numerical invariant closely related to the dimension of its
largest metabelian quotient.
\end{abstract}

Author Accepted Manuscript

Published in Journal of Algebra \textbf{593} (2022), 142--177.

\verb#https://dx.doi.org/10.1016/j.jalgebra.2021.11.012#

\bigskip

\maketitle

\section{Introduction}\label{sec:intro}

In this paper an {\em algebra (of maximal class) of type $n$,}
where $n>1$ is an integer, is a graded Lie algebra
$L=\bigoplus_{i=1}^{\infty}L_i$ over a field,
with $\dim L_i=0$ for $1<i<n$ and $\dim L_i=1$ otherwise,
such that $[L_i,L_1]=L_{i+1}$ for all $i\ge n$.
In particular, according to this definition $L$ has infinite dimension.
The qualifier `of maximal class', which we will omit as a rule,
refers to the fact that each Lie power $L^i$ of $L$ with $i>2$
has codimension one in the previous Lie power $L^{i-1}$,
and hence the quotient $L/L^i$ is nilpotent of nilpotency class $i-1$,
thus one less than its dimension.
Motivations for investigating such algebras can be ultimately
traced back to the {\em coclass theory} of pro-$p$ groups,
but we refer to the Introduction of~\cite{IMS} for a discussion
of those origins.

Over a field of characteristic zero a classification of algebras of type $2$
was obtained by Shalev and Zelmanov in~\cite{ShZe:narrow-Witt},
within a broader investigation.
In fact,~\cite[Theorem~7.1]{ShZe:narrow-Witt} showed, among else, that
over a field of characteristic zero there are precisely
three algebras of type $2$, up to isomorphism of graded Lie algebras.
One of them is the positive part of the Witt algebra
$\Der\bigl(F[X]\bigr)$,
hence the subalgebra $W$ with graded basis given by
$E_i=x^{i+1}\partial/\partial x$
for $i>0$ (having degree $i$), with Lie product
$[E_i,E_j]=(j-i)E_{i+j}$.
The other two are soluble, and have a maximal abelian ideal of codimension one and two, respectively
(see Section~\ref{sec:sequence}).

Each of those three Lie algebras of type $2$ makes sense
in positive characteristic $p$, but only the two soluble ones
are algebras of type $2$
(due to $[E_{p+1},E_1]=0$ in the Witt algebra).
Caranti and Vaughan-Lee classified algebras of type $2$
over a field of odd characteristic in~\cite{CVL00},
and over a field of characteristic two in~\cite{CVL03},
but their classification involves another degree of complexity due
to a connection with algebras of type $1$, which we now introduce.

For the purposes of this paper, again limiting ourselves to
infinite-dimensional algebras,
an {\em algebra (of maximal class) of type $1$}
is a graded Lie algebra
$L=\bigoplus_{i=1}^{\infty}L_i$ over a field,
with $\dim L_1=2$,
$\dim L_i=1$ for $i>1$,
and such that $[L_i,L_1]=L_{i+1}$ for all $i\ge 1$.
In particular, such algebras are generated by their
first homogeneous component $L_1$, thus justifying their name,
whilst algebras of type $n$ are clearly generated by an element of degree $1$ and one of degree $n$.

It is not hard to see that over a field of characteristic zero
all algebras of type $1$ are isomorphic, and have an abelian maximal ideal.
In~\cite{Sha:max} Shalev constructed a countable family of
insoluble algebras of type $1$, over an arbitrary field of prime characteristic.
A systematic study of algebras of type $1$ started with~\cite{CMN}.
There it was shown that over any field $F$ of positive characteristic there are $|F|^{\aleph_0}$
non-isomorphic algebras of type $1$.
This large cardinality arose from possibly repeating
certain procedures countably many times.
In~\cite{CN} Caranti and Newman classified all algebras of type $1$
over a field of odd characteristic, by showing that
suitable application of the machinery developed in~\cite{CMN} is able to account for all of them.
A similar result in characteristic $2$ followed in~\cite{Ju:maximal}.
Thus, algebras of type $1$ have been classified, although the classification statement
may look unconventional as it involves the possible application of certain steps countably many times.
This classification is also important because classifications of other classes of Lie algebras rely on it as an essential ingredient.
In particular, this is the case for the algebras
of type $n$
studied in this paper, and also for {\em thin} Lie algebras,
see~\cite{AviMat:A-Z} and the references therein.

A classification of algebras of type $n$ is bound to
incorporate all the complexity of algebras of type $1$ due to the following simple observation.
An algebra of type $1$ is called {\em uncovered} if there is an element $z\in L_1$ such that $[L_i,z]=L_{i+1}$ for all $i>1$.
This condition can always be achieved by extending the ground field, and is automatically satisfied if that is uncountable.
If $L$ is an uncovered algebra of type $1$, then the subalgebra generated by $z$ and $L_n$ is an algebra of type $n$.
Different choices of $z$ in $L_1$ may lead to non-isomorphic subalgebras of type $n$.
Thus, in any prospective classification of algebras of type $n$
we should expect subalgebras of algebras of type $1$
to form a substantial ingredient.

Caranti and Vaughan-Lee obtained a full classification of algebras of type $2$ in~\cite{CVL00,CVL03}.
In particular, in~\cite{CVL00} they showed that
any algebra of type $2$ over a field of odd characteristic,
which is not a graded subalgebra of an algebra of type $1$ in the way described,
belongs to an explicit countable family of soluble algebras,
with an additional family appearing only in characteristic $3$.
A classification of algebras of type $2$ in characteristic $2$ was then
obtained in~\cite{CVL03}.
The differences between~\cite{CVL00} and~\cite{CVL03} suggested a
distinction, in methods and conclusions, between
the tasks of classifying algebras of type $n$ with $n$
less than the characteristic $p$, and algebras of type $p$.
Progress on each of those more general problems
was made in the PhD theses of Ugolini~\cite{Ugo:thesis}
and Scarbolo~\cite{Sca:thesis} (under the supervision
of the first author, jointly with Caranti in the former case).

A crucial concept for the study of algebras of type $1$
in~\cite{CMN,CN}, that of constituents and their lengths,
was extended to algebras of type $2$ in~\cite{CVL00},
and slightly updated in~\cite{CVL03} to better include characteristic $2$.
Following the latter, constituents for algebras of arbitrary type $n>1$
were defined in~\cite[Section~3]{IMS}.
We postpone recalling the detailed definition to
Section~\ref{sec:constituents},
and content ourselves in this introduction to quoting
from~\cite[Section~5]{IMS} a convenient characterization of their lengths
in terms of the Lie powers (or terms of the lower central series)
of the Lie square $L^2$ of $L$.

Assuming $[L_{n+1},L_n]=0$,
the {\em constituent lengths} of an algebra $L$ of type $n>1$
are given by
$\ell=\ell_1=\dim\bigl(L^2/(L^2)^2\bigr)+n$,
and
$\ell_r=\dim\bigl((L^2)^r/(L^2)^{r+1}\bigr)$ for $r>1$.
These equations are valid also for algebras of type $1$,
and unconditionally in that case.
However, for $n>1$ they do require the hypothesis $[L_{n+1},L_n]=0$,
which is easily seen to be equivalent to $[L^2,L_n]\subseteq L^{n+3}$.
The natural definition of constituents given in~\cite[Section~3]{IMS}
implies $\ell\ge 2n$, and $\ell$ takes its minimal allowed value $2n$
precisely when $[L_{n+1},L_n]\neq 0$.
In that case, however,
one loses the direct connection stated above
with the dimension of $L/(L^2)^2$, the largest metabelian quotient of $L$.
For example, it is easy to see (\cite[Section~2]{IMS}) that,
over a field of arbitrary characteristic,
for $n>1$ there are precisely
two metabelian algebras of type $n$ up to isomorphism.
However, one has $\ell=\infty$ and the other has $\ell=2n$,
although $(L^2)^2=0$ in both cases.
In fact, in the exceptional situation where  $[L_{n+1},L_n]\neq 0$
the constituents of an algebra $L$ of type $n$
are actually of rather limited use,
as was already noted in the case $n=2$:
according to~\cite{CVL00}
such algebras ``do not admit a theory of constituents,''
and they had to be dealt with in an {\em ad hoc} manner there.
In the main result of this paper, which we describe now,
we will actually need a stronger assumption
on $\ell$ than just $\ell>2n$ to avoid problematic settings.

The first constituent length $\ell$ is the main structural parameter
of an algebra of type $n$.
Finding the possible values it may take is a fundamental
starting point of a classification work.
In particular, according to~\cite[Theorem~5.5]{CMN}
(or the revisited proof in~\cite{Mat:chain_lengths}),
the first constituent length $\ell$ of a non-metabelian
algebra of type $1$ always equals $2q$ for some power
$q$ of the characteristic $p$.
When $n>1$ small values of $\ell$ cause complications,
and hence we will assume $\ell>4p$ here
for cleaner proofs and statements.
The main goal of this paper is proving an analogous result
for algebras of type $n$ with $1<n<p$,
which is part of the following result.

\begin{thm}\label{thm:first_length}
Let an algebra of type $n$, over a field of odd characteristic $p$, with $1<n\le p$,
have its first constituent of finite length $\ell>4p$.
Then
\begin{itemize}
\item either $\ell=2q$, where $q$ is a power of $p$,
\item or $q<\ell\le q+n$, where $q$ is a power of $p$ and $\ell$ is even.
\end{itemize}
\end{thm}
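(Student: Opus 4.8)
The plan is to reduce the entire structure of $L$ to a single sequence of structure constants and then extract the constraint on $\ell$ from a Pascal-type recurrence together with antisymmetry. First I would fix a nonzero $y\in L_1$ and a generator $v\in L_n$, set $e_n=v$ and $e_i=[e_{i-1},y]$ for $i>n$, so that $e_i$ spans $L_i$ and $D=\ad y$ (acting on the right) is a degree-raising derivation with $e_i=D^{\,i-n}(v)$. Writing $[e_i,e_j]=c_{i,j}\,e_{i+j}$ for $i,j\ge n$, the derivation property of $D$ gives at once the recurrence
\[
c_{i,j}=c_{i+1,j}+c_{i,j+1}.
\]
Hence, with $\alpha_j:=c_{n,j}$ the coefficients of the brackets $[v,e_j]$, every structure constant is a finite difference $c_{n+a,j}=\sum_{m}(-1)^m\binom{a}{m}\alpha_{j+m}$ of the single sequence $(\alpha_j)$, so $L$ is completely encoded by $(\alpha_j)_{j\ge n}$, subject to the antisymmetry $c_{i,j}=-c_{j,i}$, which in terms of $(\alpha_j)$ becomes the functional equation $(\Delta^a\alpha)_{n+b}=-(\Delta^b\alpha)_{n+a}$ with $\Delta\alpha_j=\alpha_j-\alpha_{j+1}$.

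Next I would translate the hypothesis on $\ell$ into the language of the $\alpha_j$. Since $L^2=\bigoplus_{i\ge n+1}L_i$ and any graded ideal is upward closed from its minimal degree, the characterization recalled in the introduction shows that finiteness of the first constituent means exactly that $(L^2)^2$ is the tail $\bigoplus_{i\ge\ell+1}L_i$; equivalently $c_{i,j}=0$ whenever $i,j\ge n+1$ and $i+j\le\ell$, while some bracket at degree $\ell+1$ is nonzero. The vanishing of row $n+1$, namely $c_{n+1,j}=\alpha_j-\alpha_{j+1}=0$ for $n+1\le j\le\ell-n-1$, forces $(\alpha_j)$ to be constant on the block $n+1\le j\le\ell-n$, of length $\ell-2n$, and the first reappearance of a nonzero bracket at degree $\ell+1$ says precisely that this constancy breaks at the next step. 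The task is then to determine for which $\ell$ a sequence that is constant on a block of length $\ell-2n$ can satisfy the antisymmetry functional equation.

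The heart of the argument is then a binomial-coefficient computation over $\F_p$. Feeding the block of equal values into the relations $(\Delta^a\alpha)_{n+b}=-(\Delta^b\alpha)_{n+a}$ expresses the values of $\alpha_j$ just past the block in terms of binomial coefficients $\binom{\ell-c}{\,\cdot\,}$, where $c$ is an explicit shift of order $n$; the requirement that the block close up exactly at degree $\ell+1$, and not earlier, becomes the condition that one such coefficient be nonzero modulo $p$ while a whole initial segment vanishes. Using the Frobenius identity $(1-t)^{q}=1-t^{q}$ in $\F_p[t]$ — equivalently Kummer's theorem on the $p$-adic valuation of binomial coefficients — a maximal run of vanishing coefficients of this length is possible only when the relevant top index is a power $q$ of $p$. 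Reading this back through the shift $c$ yields either the clean symmetric case $\ell=2q$ (the block closing exactly as for type $1$) or the shifted possibility $q<\ell\le q+n$, the parity constraint that $\ell$ be even coming from the antisymmetry $c_{i,j}=-c_{j,i}$ together with $p$ odd, which is incompatible with an odd closing length.

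I expect the offset analysis to be the main obstacle. Unlike type $1$, the second generator sits in degree $n$ rather than degree $1$, which shifts all indices by $n$ and is exactly what splits the conclusion into the two alternatives; controlling this shift — pinning down the admissible range $q<\ell\le q+n$, establishing the parity, and ruling out the intermediate values a naive count would allow — is delicate. The hypotheses $1<n\le p$, $p$ odd, and $\ell>4p$ are what make this clean: they guarantee that the binomial coefficients governing the closing lie in the range where Kummer's theorem applies without interference, that no small-degree or characteristic-two degeneracy disturbs the antisymmetry, and that the block $n+1\le j\le\ell-n$ is long enough to force the power-of-$p$ conclusion rather than a spurious short pattern.
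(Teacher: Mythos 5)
Your setup is sound and is essentially the paper's: the Pascal recurrence $c_{i,j}=c_{i+1,j}+c_{i,j+1}$ is the generalized Jacobi identity, the finite-difference formula $c_{n+a,j}=\sum_m(-1)^m\binom{a}{m}\alpha_{j+m}$ is exactly how the paper expands $[e_a,e_b]$, and the translation of $\ell$ into the vanishing of $(L^2)^2$ below degree $\ell+1$ matches Proposition~\ref{prop:constituent_lengths}. But the heart of your argument has a genuine gap. Antisymmetry fed only with the zero block of the \emph{first} constituent cannot determine $\ell$: the last $n$ entries $\beta_{\ell-n+1},\dots,\beta_\ell$ of the first constituent are genuinely free parameters at this stage (the paper packages them into an unknown monic polynomial $g(x)$ of degree $n-1$), and the relations you propose to ``feed the block into'' only produce a useful run of vanishing coefficients of $(x-1)^{\ell-n+1}g(x)$ when the index reaches into the \emph{second} constituent. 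Concretely, the paper derives $[x^j](x-1)^{\ell-n+1}g(x)=0$ for $\ell-\ell_2<j\le\ell-n$ from $[e_{\ell+1},e_j]=0$ for $n\le j<\ell_2$, and then needs the nontrivial lower bound $\ell_2\ge\ell/2$ (Lemma~\ref{lemma:constituent_bound}) to make that range cover ``roughly the upper half'' so that Theorem~\ref{thm:polynomials} applies. Nothing in your sketch supplies either the second-constituent input or this lower bound, and without them the Frobenius/Kummer step has nothing of the required length to act on.

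The second, larger omission is that even after the binomial analysis is done correctly, it does \emph{not} yield the stated conclusion: it leaves open the whole range $q<\ell\le q+2n-2$, and the values $q+n<\ell\le q+2n-3$ are not ``intermediate values a naive count would allow'' that fall away from a sharper count --- they are consistent with every relation used so far and must be killed by new Lie-algebra computations. You acknowledge this step is ``delicate'' but propose no mechanism for it. In the paper this exclusion is the dominant part of the proof: it forces $\ell_2\in\{q,q-1\}$ via bracket computations around the trailing term $\beta_{q+s}$ of the first constituent (Subsection~\ref{subsec:spurious}), and then Theorem~\ref{thm:final_contradiction}, proved over all of Section~\ref{sec:final_contradiction}, rules out those two cases by solving explicit linear systems for stretches of the second and third constituents (Lemma~\ref{lemma:linear_system}) and deriving a contradiction. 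As written, your proposal proves at best the weaker statement ``$\ell=2q$ or $q<\ell<q+2n-2$'' --- and only modulo the missing second-constituent input above.
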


In the special case $n=2$ Theorem~\ref{thm:first_length}
says  $\ell$
equals either $2q$ or $q+1$ for some power $q$ of $p$ if $p>2$
(as in~\cite[Section~4]{CVL03}),
and equals either $2q$ or $q+2$ if $p=2$
(as in~\cite[Proposition~3.1]{CVL03},
with the latter case equivalently written in the form $2q'+2$ there).

The special case of Theorem~\ref{thm:first_length} where $n=p$
was obtained in~\cite{Sca:thesis}, and appears
as~\cite[Theorem~2]{IMS} with a new proof.
In fact, extending the results of~\cite{CVL03},
algebras of type $p$ were fully classified
in~\cite{Sca:thesis} under the assumption $\ell>4p$
(erroneously omitted in~\cite{Sca:thesis},
but required for that proof).
An exposition of some ingredients of the classification proof
for $n=p$ is given in~\cite{IMS}.

When Theorem~\ref{thm:first_length} is combined with the results
of~\cite{Ugo:thesis}, it yields the following classification of algebras of type $n$, under an additional assumption $p>4n$.

\begin{thm}\label{thm:classification}
Let $L$ be an algebra of type $n > 1$ over a field of characteristic $p$.
If $p>4n$ and $\ell>4p$, then $L$ is either a graded subalgebra of an uncovered algebra of type $1$
or $L$ belongs to an explicitly described countable family $\mathcal{E}$.
\end{thm}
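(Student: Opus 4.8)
The plan is to obtain Theorem~\ref{thm:classification} by combining the dichotomy on $\ell$ furnished by Theorem~\ref{thm:first_length} with the structural analysis of~\cite{Ugo:thesis}, matching each of the two possibilities for $\ell$ against one of the two alternatives in the conclusion. Since $p>4n$ gives $1<n<p$, Theorem~\ref{thm:first_length} is available once $\ell$ is finite. I would first reduce to non-metabelian $L$ with $\ell$ finite: the metabelian algebras of type $n$ are the two explicit ones recalled above (\cite[Section~2]{IMS}), one with $\ell=2n\le 2p$, excluded by $\ell>4p$, and one with $\ell=\infty$, an isolated case to be placed by hand. Theorem~\ref{thm:first_length} then leaves exactly $\ell=2q$, or $q<\ell\le q+n$ with $\ell$ even, where $q$ is a power of $p$.

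For the case $\ell=2q$ the goal is to reconstruct, from $L$, an ambient uncovered algebra $M$ of type $1$ such that $L$ is the subalgebra of $M$ generated by a suitable $z\in M_1$ and by $M_n$, reversing the embedding observation of the Introduction. The idea is to adjoin a second generator in degree $1$ and to fill in the homogeneous components of degrees $2,\dots,n-1$ missing from $L$, so that the enlarged graded algebra is generated by its first component and is of type $1$. The relevance of the value $\ell=2q$ is that, by~\cite[Theorem~5.5]{CMN} (or~\cite{Mat:chain_lengths}), this is precisely the first constituent length of a non-metabelian algebra of type $1$, so the reconstructed $M$ has the correct invariant to be legitimate; the classification of algebras of type $1$ in~\cite{CN} then accounts for all such $L$. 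Showing that this reconstruction is always consistent, and is governed by the covering and inflation machinery of~\cite{CMN,CN}, is the work done in~\cite{Ugo:thesis}, and it is here that the strengthened hypothesis $p>4n$ serves to keep the later constituents under control and to exclude the small-characteristic interference that would otherwise obstruct the construction.

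In the remaining case $q<\ell\le q+n$ with $\ell$ even, the short first constituent pins $L$ down to a rigid shape determined by finitely much data, which one enumerates to exhibit the explicit countable family $\mathcal{E}$. This generalises the case $n=2$, where the analogous algebras---those with $\ell=q+1$ for odd $p$---form the explicit soluble family of~\cite{CVL00,CVL03}; I would follow the same route, analysing the lower central series of $L^2$ directly and using $\ell>4p$ together with $p>4n$ to show that the structure stabilises after boundedly many constituents, so that only countably many graded isomorphism types survive.

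The main obstacle is the case $\ell=2q$: one must prove that \emph{every} algebra of type $n$ with this first constituent length actually embeds in an algebra of type $1$, rather than merely sharing the invariant $\ell=2q$ with such subalgebras. This is a genuine reconstruction problem---the defining data of $L$ must be extended consistently to the richer data of a type $1$ algebra so that the procedures of~\cite{CMN,CN} apply verbatim---and controlling this extension uniformly in all degrees, while checking that $p>4n$ and $\ell>4p$ suffice to rule out the degenerate configurations, is the technical heart of the argument and the content that~\cite{Ugo:thesis} supplies.
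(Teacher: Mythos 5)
Your proposal matches the paper's own (very brief) argument: Theorem~\ref{thm:classification} is obtained exactly by combining the dichotomy $\ell=2q$ versus $q<\ell\le q+n$ from Theorem~\ref{thm:first_length} with the classification in~\cite{Ugo:thesis} of algebras of type $n$ having precisely those first constituent lengths under the hypothesis $p>4n$, the only new ingredient of this paper being the elimination of the spurious range of $\ell$ that~\cite{Ugo:thesis} could not exclude. Your additional remarks on the metabelian edge cases and on what the reconstruction in~\cite{Ugo:thesis} must accomplish are consistent with how the paper frames the division of labour, so the approach is essentially the same.
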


In fact, Ugolini~\cite{Ugo:thesis}
classified algebras of type $n$ with first constituent length
$q<\ell \le q+n$ or $\ell = 2q$, for any power $q$ of $p$
but under the hypothesis $p>4n$.
However, the above classification result of algebras of type $n$
could not be established in~\cite{Ugo:thesis}.
That is because
only a weaker version of Theorem~\ref{thm:first_length} was attained in~\cite{Ugo:thesis},
allowing a further range of possibilities for $\ell$
which cannot really occur for algebras of type $n$.
In fact, excluding that spurious range of values for $\ell$
is the hardest and most technical part of our proof of
Theorem~\ref{thm:first_length}.
That was already the case in the special case $n=p$
of Theorem~\ref{thm:first_length} proved in~\cite{IMS}, but much more so
in the case $n<p$ proved here, where it occupies
a large part of Section~\ref{sec:first_length_proof},
and the whole long
Section~\ref{sec:final_contradiction}.
It appears likely that the results of~\cite{Ugo:thesis}
should remain true after replacing the hypothesis $4n<p$ with
$n<p$ and $\ell>4p$.
If that is confirmed then Theorem~\ref{thm:first_length}
would hold under the weaker hypotheses
$n<p$ and $\ell>4p$.

Each of the possibilities for $\ell$ listed in Theorem~\ref{thm:first_length} does occur for some of the algebras of Theorem~\ref{thm:classification}.
In particular, the first constituent of $L$ has length $\ell=2q$
whenever $L$ is a graded subalgebra of an uncovered algebra of type $1$.
The remaining possibilities occur for the algebras of the exceptional family $\mathcal{E}$.
In this paper we also give an explicit construction
for the algebras of the family $\mathcal{E}$.
We summarize here our conclusions as an existence result, and refer to Section~\ref{sec:exceptional} for more precise
structural information on those algebras.

\begin{thm}\label{thm:exceptional}
Let $q$ be a power of an odd prime $p$, with $q>p$, and let $1<n<p$.
For every $0<m<n$ there is a soluble algebra $L$ of type $n$, over a field of odd characteristic $p$,
satisfying the following:
\begin{itemize}
\item
the length $\ell$ of the first constituent of $L$ equals $q+m$ if $m$ is odd, and $q+m+1$ if $m$ is even;
\item
all constituents past the first have length $q$,
except for the second constituent if $m$ is even, which has then length $q-1$.
\end{itemize}
\end{thm}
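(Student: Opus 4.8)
The plan is to prove the existence statement constructively, exhibiting for each admissible triple $(q,n,m)$ an explicit soluble graded Lie algebra $L$ of type $n$ and then reading off its constituent lengths from the formulas recalled in the Introduction. I would describe $L$ through a graded basis consisting of $y\in L_1$ together with $v_i\in L_i$ for every $i\ge n$, so that $\dim L_1=1$, $\dim L_i=1$ for $i\ge n$ and $\dim L_i=0$ for $1<i<n$. After normalizing $[v_i,y]=v_{i+1}$ for all $i\ge n$, the maximal-class condition $[L_i,L_1]=L_{i+1}$ holds by construction, and $L$ is generated by $y$ and $w:=v_n$. All remaining freedom sits in the brackets $[v_i,v_j]=\mu_{i,j}v_{i+j}$ for $i,j\ge n$, which I would choose so that $L$ is soluble and the lower central series of $L^2=\bigoplus_{i>n}L_i$ is the descending chain of graded tails $(L^2)^r=\bigoplus_{i\ge a_r}L_i$ with prescribed starting degrees $a_r$.

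Concretely, the constituent lengths are exactly the gaps $a_{r+1}-a_r$ of this chain, since each graded component of $L^2$ is one-dimensional; thus I need $a_1=n+1$, $a_2-a_1=\ell-n$ (equivalently $a_2=\ell+1$), and $a_{r+1}-a_r=q$ for $r\ge 2$, with the single exception $a_3-a_2=q-1$ when $m$ is even. The eventual period $q$ I would install by letting the tail be governed by a soluble period-$q$ model, whose resets arise from the vanishing $\binom{q}{k}\equiv 0\pmod p$ for $0<k<q$; onto this I would graft a short first constituent carrying the extra generator $w$ by prescribing that the first nonzero bracket $[v_i,v_j]$ with both $i,j>n$ occurs in degree $a_2=\ell+1$ while all such brackets of lower degree vanish. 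The brackets $[w,v_j]$ of the degree-$n$ generator, by contrast, may be nonzero already within the first constituent, as they contribute to $L^2/(L^2)^2$ rather than to $(L^2)^2$. Because $q>2n$ we have $\ell>2n$, hence $[L_{n+1},L_n]=0$, so the recalled formulas $\ell=\dim\bigl(L^2/(L^2)^2\bigr)+n$ and $\ell_r=\dim\bigl((L^2)^r/(L^2)^{r+1}\bigr)$ are valid; evaluating them against the tails $(L^2)^r$ returns first constituent length $\ell$ and the prescribed later lengths.

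The split according to the parity of $m$ reflects a parity constraint on $\ell$ that the construction must respect, consistently with the even-$\ell$ conclusion of Theorem~\ref{thm:first_length}. Since $q$ is a power of the odd prime $p$ it is odd, so $q+m$ is even exactly when $m$ is odd; thus for odd $m$ the target $\ell=q+m$ is compatible with an even first constituent and is realized directly, whereas for even $m$ one is forced to the next even value $\ell=q+m+1$, the surplus unit of degree being compensated one step later by a second constituent of length $q-1$. I expect the main obstacle to be precisely the verification that the grafted initial configurations of the structure constants $\mu_{i,j}$ are compatible with every instance of the Jacobi identity and graded antisymmetry — in particular that they force the first break to fall on the parity dictated above — and that these constants propagate consistently to all higher degrees while keeping $L$ soluble and preserving the period-$q$ tail. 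Once this consistency is in hand, the type-$n$ axioms and the bookkeeping of the starting degrees $a_r$ follow routinely.
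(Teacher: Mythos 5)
Your proposal does not actually contain a proof: it defers the entire mathematical content of the theorem to a step you explicitly leave open. You set up a graded basis $y,v_n,v_{n+1},\ldots$ with $[v_i,y]=v_{i+1}$, declare that ``all remaining freedom sits in the brackets $[v_i,v_j]=\mu_{i,j}v_{i+j}$,'' and then say you would \emph{choose} these so that the lower central series of $L^2$ has prescribed jumps. But the $\mu_{i,j}$ cannot be chosen freely: once the sequence $(\beta_i)$ (equivalently, the brackets $[v_i,v_n]$) is fixed, all other $\mu_{i,j}$ are forced by repeated use of the Jacobi identity, and the Jacobi identity then imposes severe compatibility constraints on the $\beta_i$ themselves. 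Indeed, the whole point of Theorem~\ref{thm:first_length} is that these constraints rule out almost all values of $\ell$; so the existence of \emph{any} consistent choice realizing $\ell=q+m$ (or $q+m+1$) with the stated tail is precisely what must be established, and you acknowledge this (``I expect the main obstacle to be precisely the verification \ldots'') without carrying it out. No candidate structure constants are exhibited, so nothing is proved. Your reduction of constituent lengths to the gaps $a_{r+1}-a_r$ of the chain $(L^2)^r=\bigoplus_{i\ge a_r}L_i$ via Proposition~\ref{prop:constituent_lengths} is correct, and your parity heuristic for why even $m$ forces $\ell=q+m+1$ with a compensating second constituent of length $q-1$ matches what actually happens; but these are bookkeeping observations, not the construction.

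For comparison, the paper resolves the existence question by an explicit realization: it takes the divided power algebra $K[x;c]$ over $K=F(t)$, views it as an abelian Lie algebra acted on by $\mathrm{gl}(K[x;c])$, and generates $L$ inside the semidirect sum by $z=(0,Z)$ with $Z=-\partial-tx^{(q-1)}I$ and $e_n=(x^{(q+m-n)},tx^{(q-n)}I)$. The Jacobi identity holds automatically because $L$ lives inside an honest Lie algebra, solubility is visible from the abelian ideal coming from $K[x;c]$, the indeterminate $t$ produces the period-$q$ tail via $e_{rq+m+j}=(t^rx^{(q-j)},0)$, and the constituent lengths are read off from explicit binomial-coefficient evaluations of $[e_{q+m-j},e_n]$ using Lemma~\ref{lucas_cor}. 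Some such concrete model (or, alternatively, a direct verification that a fully specified sequence $(\beta_i)$ satisfies all instances of the Jacobi identity) is indispensable; without it your argument is a restatement of the theorem rather than a proof of it.
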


Note that the algebras of Theorem~\ref{thm:exceptional} attain all even values of $\ell$ in the interval
$q<\ell\le q+n$ of~Theorem~\ref{thm:first_length}.

We now outline the structure of the paper.
In Sections~\ref{sec:sequence} and~\ref{sec:constituents}
we recall definitions pertaining to algebras of type $n$,
most notably constituents, and recall some of their basic structural properties,
referring to~\cite[Sections~2--5]{IMS} for a broader discussion and proofs.

Our original contributions start with Section~\ref{sec:polynomials},
which establishes a technical result in a polynomial context.
In fact, the central argument of the proof of
Theorem~\ref{thm:first_length}, in Subsection~\ref{subsec:main},
translates a sequence of Lie algebraic
equations which hold in an algebra of type $n$
into a polynomial condition, which is more easily and tidily
dealt with separately.
This general scheme of proof was employed in~\cite{IMS}
for algebras of type $p$, with the polynomial approach
built on an idea first introduced in~\cite{Mat:chain_lengths}.
Section~\ref{sec:polynomials} resembles~\cite[Section~6]{IMS},
but various adaptations were needed to go from the case $n=p$
of~\cite{IMS} to the case $1<n<p$ of interest here.

The polynomial results of Section~\ref{sec:polynomials},
when applied in Subsection~\ref{subsec:main} to an algebra $L$ of type $n$
with first constituent length $\ell$,
restrict the possibilities for $\ell$ to those listed in
Theorem~\ref{thm:first_length}, plus a further range of spurious values.
Ruling out the latter requires further Lie algebra calculations
and forms the longest part of the proof of Theorem~\ref{thm:first_length},
in fact much longer than in the broadly similar proof in~\cite{IMS}.
The main part of that argument is addressed in
Subsection~\ref{subsec:spurious}, leaving us with a special configuration
which is particularly hard to rule out.
Because of the even more technical nature of the proof we postpone
dealing with that configuration to Section~\ref{sec:final_contradiction}.

Finally, in Section~\ref{sec:exceptional} we present a construction
of the algebras of the exceptional family $\mathcal{E}$ of
Theorem~\ref{thm:classification}.
Our construction works under more general hypotheses than those
of Theorem~\ref{thm:classification}.
It extends a construction given in~\cite[Section~10]{IMS}
for the case $n=p$, based on derivations of a ring of divided powers,
and is more efficient and compact
than the original construction of those algebras
given in~\cite{Ugo:thesis}.
In particular, our construction yields Theorem~\ref{thm:exceptional},
but also much more precise information on those algebras.
In particular, full structural information on those algebras
(that is, their multiplication table if we wish)
is condensed in certain formal power series,
for which we exhibit explicit expressions as rational functions
in Proposition~\ref{prop:gen_func}.

\section{Algebras of type $n$}\label{sec:sequence}

In this section we introduce the Lie algebras
of interest for the present study,
referring to~\cite[Section~2]{IMS} for a broader review
and more detailed explanations.

A systematic study of graded Lie algebras of maximal class
begun in~\cite{CMN}.
Those were defined as graded Lie algebras
$L=\bigoplus_{i=1}^{\infty}L_i$, over a field,
with $\dim L_1=2$, $\dim L_i\le 1$ for $i>1$,
and $[L_i,L_1]=L_{i+1}$ for all $i$.
Graded Lie algebras of maximal class according
to this definition are generated by $L_1$.
This was a natural assumption coming to graded Lie algebras
from the study of $p$-groups (and pro-$p$ groups),
being satisfied by the graded Lie algebra
associated to the lower central series of a $p$-group.
Because Lie algebras of maximal class with other gradings
are of interest, we will refer to those graded Lie algebras
of maximal class defined above as {\em algebras of type $1$}.
Also, we will only be interested in infinite-dimensional
Lie algebras in this paper, and we take infinite-dimensionality
of all Lie algebras we mention from now on as a blanket assumption.
Thus, an algebra of type $1$ is a graded Lie algebra
$L=\bigoplus_{i=1}^{\infty}L_i$, over a field,
with $\dim L_1=2$, $\dim L_i=1$ for $i>1$,
and $[L_i,L_1]=L_{i+1}$ for all $i$.
As we mentioned in the Introduction, over any field
of positive characteristic there are very many
algebras of type $1$, but they may be considered classified
according to~\cite{CN} in odd characteristic, and~\cite{Ju:maximal}
in characteristic $2$.

A basic tool for the study of algebras of type $1$,
in~\cite{CMN,CN,Ju:maximal}, was the sequence of two-step centralizers.
This terminology goes back to early work on $p$-groups of maximal class
started by Norman Blackburn in 1958, but because this connection
disappears for algebras of type $n>1$, we will simply refer to it
as the {\em sequence} of an algebra of type $1$.
In the original definition (see~\cite[Section~2]{IMS}) the terms of
this sequence are elements of a projective line over the base field $F$,
but we will place ourselves here in a special situation where
the sequence can be defined more succinctly.

If $L$ is any algebra of type $1$ then we may choose a nonzero element
$y$ of $L_1$ such that $[L_2,y]=0$.
Following~\cite{CMN} we say that an algebra $L$ of type $1$ is {\em uncovered} if there is an element
$z\in L_1$ such that $L_i=[L_{i-1},z]$ for all $i \ge 1$.
Being uncovered is a mild restriction as it can always
be achieved by extending the base field $F$ of $L$
(and is automatically satisfied if $F$ is uncountable),
but allows a simpler description of $L$.
In fact, if $L$ is uncovered and we have made a choice of $z$,
then we may use that to recursively define a basis of $L$
(together with $z$ itself) by setting $e_2=[y,z]$ and
$e_i=[e_{i-1},z]$ for all $i>2$.
The {\em sequence} $(\alpha_i)_{i>1}$
of $L$ is then defined by the equation
$[e_i,y]=\alpha_{i}e_{i+1}$ for all $i>1$.
Hence the sequence of $L$ encodes the adjoint action of $y$ on $L$,
and completely determines $L$ up to isomorphism
(because $y$ and $z$ generate $L$ as an algebra, and
the adjoint action of $z$ is used to recursively define the elements $e_i$).
Beware that, despite our use of the definite article,
{\em the} sequence of $L$ actually depends on the choice
of $y$ and $z$,
but that will not concern us in this paper.

We briefly pause our discussion to introduce some notation
and a useful general tool.
Long Lie products are to be interpreted using the left-normed convention, meaning that $[a,b,c]$ stands for $[[a,b],c]$.
A convenient shorthand is $[a, c^i]$,
where $[a,c^0]=a$ and $[a,c^i]=[a,c^{i-1},c]$ for $i>0$.
We will make frequent use of the {\em generalized Jacobi identity}
\[
[a,[b,c^i]]=\sum_{j=0}^{i}(-1)^j\binom{i}{j}[a,c^j,b,c^{i-j}].
\]

The special choice of an element $y$ with $[L_2,y]=0$
in an algebra $L$ of type $1$ has the crucial consequence
that at least one of each pair of consecutive entries of the sequence vanishes~\cite[Lemma~3.3]{CMN}.
This fact is equivalent to $[L_i,y,y]=0$ for all $i\ge 1$.
When $i$ equals $1$ or $2$ this holds because $[L_2,y]=0$
by definition of $y$.
Proceeding inductively,
suppose $[L_i,y]\neq 0$ for some $i>2$,
and assume $[L_{i-1},y]=0$ as we may.
Then letting $u$
span $L_{i-1}$ we have that $[u,z]$ spans $L_i$, and
\begin{equation*}
0=[u,[z,y,y]]=[u,z,y,y]-2[u,y,z,y]+[u,y,y,z]=[u,z,y,y]
\end{equation*}
yields $[L_i,y,y]=0$ as desired.
Incidentally, in characteristic different from two
the condition $[L,y,y]=0$ means that
$y$ is a {\em sandwich element} of $L$.
Some implications of this for algebras of type $1$ and for
{\em thin Lie algebras} as well are discussed in~\cite{Mat:chain_lengths}.
For the present discussion, it means that the sequence of an algebra of type $1$ consists of isolated nonzero entries,
separated by blocks of one or more zeroes.

We now introduce algebras of type $n$.
Their name refers to the fact that they are $\N$-graded
Lie algebras of maximal class, generated by
an element of degree $1$ together with an element of degree $n$, but our definition will be more restrictive than that.
The broader definition and the further challenges it poses
are discussed in~\cite[Remark~5]{IMS}.
\begin{definition}
An {\em algebra of type $n$,} with $n>1$, is a graded Lie algebra $L=\bigoplus_{i=1}^{\infty}L_i$
with $\dim L_i=0$ for $1<i<n$ and $\dim L_i=1$ otherwise,
such that $[L_i,L_1]=L_{i+1}$ for all $i\ge n$.
\end{definition}
After choosing nonzero elements $z\in L_1$ and $e_n\in L_n$
we may model a definition of the sequence of $L$
on that for uncovered algebras of type $1$.
We recursively set $e_i=[e_{i-1},z]$ for $i>n$.
Hence $L_i=F e_i$ except for $L_1=F z$, and of course $L_i=0$ for $1<i<n$ by definition.
We define {\em the sequence} $(\beta_i)_{i>n}$ of $L$ by
$[e_i, e_n]=\beta_i e_{i+n}$.
This includes the case of uncovered algebras of type $1$ as a special case, except that $y$ was used for $e_1$ there, and $L_1=F z+F y$.
Note that our element $z$ was denoted by $e_1$
in~\cite{CVL00,CVL03}, but our present usage emphasizes its special
self-centralizing property.
Again, this sequence of elements of $F$ encodes the adjoint action
of $e_n$, while the action of $z$ is actually used to recursively
define the chosen basis elements $e_i$.
It follows that the Lie algebra $L$ is completely determined by this sequence, because the adjoint action of the generators
determines the adjoint action of every element of the Lie algebra.
Note that the equation used to define $\beta_i$ could be used to extend its definition to $\beta_n=0$,
but for various reasons
it is best to leave $\beta_n$ undefined and let the sequence begin with $\beta_{n+1}$.

The sequence $(\beta_i)_{i>n}$ is identically zero for the algebra of type $n$ having multiplication
$[e_i,e_n]=0$ for all $i>n$ (and $e_i:=[e_n,z^{i-n}]$ as postulated above), which has an abelian maximal ideal.
Differently from algebras of type $1$,
of which there is a unique metabelian one,
for type $n>1$ there is precisely one further metabelian algebra,
having an abelian ideal of codimension $2$ but no abelian maximal ideal.
Its multiplication may be taken as
$[e_i,e_n]=e_{i+n}$ for $i>n$, and $[e_i,e_j]=0$ for $i,j>n+1$,
making its sequence have all entries equal to $1$.

Aside from the algebra of type $n$ with an abelian maximal ideal,
the sequence of an algebra of type $n>1$ depends on the choice of
its generators $z$ and $e_n$,
but only up to multiplication by a nonzero scalar.
If one works with a \emph{normalized sequence}
whose first non-zero entry equals $1$,
then two algebras of type $n>1$ are isomorphic if and only if their normalized sequences coincide.

\section{Constituents and their lengths}\label{sec:constituents}

In this section we recall the notion of {\em constituents}
for uncovered algebras of type $1$, we appropriately extend it to algebras
of type $n$, and we state several of their properties.
Because all this material has received a detailed presentation
in~\cite[Sections~3--5]{IMS}, here we limit ourselves
to a concise exposition
and refer to that paper for further details and all the proofs.

Constituents and their lengths were crucial in the investigation of algebras of type $1$ started in~\cite{CMN},
and in their subsequent classification in~\cite{CN,Ju:maximal}.
Consider an uncovered algebra $L$ of type $1$, and make a choice
of $y$ and $z$ in $L_1$, as described in the previous section.
If $L$ is metabelian then its sequence vanishes identically.
Otherwise, we saw in the previous section that the sequence of $L$ consists of isolated occurrences of nonzero scalars $\alpha_j$,
separated by strings of one or more zeroes.
This suggests a natural partition of the sequence into juxtaposed blocks, each made of zeroes except for a final nonzero entry.
Those blocks (or finite subsequences) are called the {\em constituents} of $L$,
and will be numbered in order of occurrence (the {\em first} constituent, the {\em second,} etc.).
The {\em length} of a constituent is simply the number of its entries,
except for the first constituent, where that number needs to be increased by one.

For an algebra $L$ of type $n>1$
we also divide its sequence $(\beta_i)_{i>n}$
into adjacent blocks, and call those its {\em constituents,}
but the appropriate definition is less obvious, because
the nonzero entries of the sequence are usually not isolated.
Assume the sequence of $L$ is not the zero sequence.

\begin{definition}\label{def:first_constituent}
The {\em first constituent} of $L$ is the sequence
$(\beta_{n+1},\ldots,\beta_{\ell})$,
where $\beta_{\ell-n+1}$ is the first nonzero entry of the sequence $(\beta_i)_{i>n}$.
\end{definition}

Thus, $\ell$ is determined by $[e_{\ell-n+1},e_n]\neq 0$ but $[e_i, e_n]=0$ for $n\le i\le\ell-n$.
We stipulate that the first constituent has length $\ell$ (despite actually having only $\ell-n$ entries as a sequence).
We define further constituents recursively.

\begin{definition}\label{def:constituent}
If $(\beta_i,\ldots,\beta_j)$ is a constituent which we have already defined,
and $\beta_{j+m-n+1}$ is the first nonzero entry
past $\beta_j$
(which exists according to Lemma~\ref{lemma:upper_bound} below), then
we define $(\beta_{j+1},\ldots,\beta_{j+m})$ to be the next constituent, of length $m$.
\end{definition}

Thus, the sequence of $L$ gets partitioned into a sequence of adjacent constituents.
The above definitions are consistent with those given in~\cite{CVL03} for algebras of type $2$.
Note that, by definition, every constituent has length at least $n$, and the first constituent has length at least $2n$.
We denote the constituent lengths by $\ell=\ell_1,\ell_2,\ell_3,\ldots$, numbered in order of occurrence.

Some guidance for the above definitions of constituents originates
from the very special setting of an algebra of type $n$
arising as a graded subalgebra of an uncovered algebra of type $1$,
which we recall now.
Let $N=\bigoplus_{i \ge 1} N_i$ be an uncovered algebra of type $1$,
make a choice of $z$ and $e_1$, and set $e_i=[e_1,z^{i-1}]$ as usual,
for $i>1$.
Then fix $n>1$ and consider
the (graded) subalgebra $L$ of $N$  generated by $z$ and $e_n$.
Note that different choices of $z$ in $N$
may lead to non-isomorphic subalgebras $L$.
Then $L=L_1\oplus \bigoplus_{i \ge n} L_i$ is an algebra of type $n$,
where $L_1=F z$, and $L_i=N_i$ for $i\ge n$.
By computing $[e_n,e_i]$ within $N$ (see~\cite{IMS} for details)
one finds that the sequence $(\beta_i)_{i>n}$ of $L$ can be obtained
from the sequence $(\alpha_j)_{j>1}$ of $N$ according to the equation
$\beta_i=\sum_{k=0}^{n-1} (-1)^k \binom{n-1}{k}\alpha_{i+k}$.
If every constituent of $N$ has length at least $n$,
whence consecutive nonzero entries $\alpha_j$ of the sequence of $N$
are spaced at least $n$ apart,
the contribution to each $\beta_i$ in the above equation comes
from a single $\alpha_j$.
It is then easy to see that all constituents of $L$ satisfy
the following definition.

\begin{definition}
Any constituent
$(\lambda_{j-m+1},\ldots,\lambda_j)$ of $L$ having the form
\[
\lambda_{j-i}
=
(-1)^i\binom{n-1}{i}\lambda_j
\qquad\text{for $0\le i<m$,}
\]
is called {\em ordinary,}
and {\em ending in $\lambda_j$} if we need to specify that.
\end{definition}

The following result,
which extends~\cite[Lemma~3.1.2]{Ugo:thesis},
provides the converse implication.

\begin{prop}[\cite{IMS}, Proposition~6]\label{OrdinaryEnding}
If $L$ is an algebra of type $n$
and all constituents of $L$ are ordinary,
then $L$ is isomorphic to a graded subalgebra
of an uncovered algebra of type $1$.
\end{prop}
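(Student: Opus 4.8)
The plan is to construct an uncovered algebra $N$ of type $1$ together with a choice of generators $z$ and $e_1$ so that the subalgebra generated by $z$ and $e_n$ reproduces the given algebra $L$ of type $n$. The natural strategy is to read off the sequence $(\alpha_j)_{j>1}$ of the desired $N$ from the sequence $(\beta_i)_{i>n}$ of $L$ by inverting the relation $\beta_i=\sum_{k=0}^{n-1}(-1)^k\binom{n-1}{k}\alpha_{i+k}$ recorded earlier in the excerpt. Formally, the operator sending $(\alpha_j)$ to $(\beta_i)$ is convolution with the sequence of signed binomial coefficients $(-1)^k\binom{n-1}{k}$, whose generating polynomial is $(1-t)^{n-1}$; since this is a unit in the ring of formal power series (its constant term is $1$), the map is invertible, and one can solve for the $\alpha_j$ uniquely once finitely many initial values are fixed. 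So I would first define a candidate sequence $(\alpha_j)$, then build $N$ from it, and finally verify that $L$ embeds as the subalgebra generated by $z$ and $e_n$.

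First I would set up the target algebra $N$ abstractly from its sequence. Because an uncovered algebra of type $1$ is completely determined (up to isomorphism) by its sequence $(\alpha_j)_{j>1}$, it suffices to produce a sequence of scalars and check that it genuinely comes from an algebra of type $1$. The delicate point here is the structural constraint recalled in Section~\ref{sec:constituents}: the sequence of an algebra of type $1$ must consist of isolated nonzero entries separated by blocks of zeroes, equivalently $[N_i,y,y]=0$ for all $i$. I would therefore choose the $\alpha_j$ so that the nonzero entries are spaced at least $n$ apart, using the constituent structure of $L$: each constituent of $L$ of length $m$ should correspond to a single nonzero $\alpha_j$ sitting at the appropriate position, with the intervening $\alpha_j$ forced to be zero by the inversion. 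Proposition~\ref{OrdinaryEnding}'s own preamble shows that when consecutive nonzero $\alpha_j$ are spaced at least $n$ apart, each $\beta_i$ receives a contribution from a single $\alpha_j$, so the forward map is transparent and the ordinary-constituent formula applies; the content is to check this spacing can always be arranged consistently with the given $(\beta_i)$.

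The main obstacle, I expect, is precisely this consistency: showing that the sequence $(\alpha_j)$ obtained by formal inversion is actually \emph{admissible}, i.e.\ that it has the isolated-nonzero-entries shape required of a genuine type~$1$ algebra, rather than merely solving the linear recursion formally. In general the inversion could produce long strings of nonzero $\alpha_j$, which would violate the $[N_i,y,y]=0$ condition; the given data about $L$ (namely that its constituents satisfy the recursive definition, together with whatever length and spacing bounds are available from Lemma~\ref{lemma:upper_bound} and the definition of constituents) must be exactly what guarantees admissibility. I would argue constituent by constituent, reading off each nonzero $\alpha_j$ from the terminal nonzero entry of the corresponding constituent of $L$ and confirming inductively that all other $\alpha_j$ in that block vanish. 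Once admissibility is secured, the remaining verification is routine: the embedding $L_1=Fz$, $L_i=N_i$ for $i\ge n$ respects the gradings and Lie products by construction, the generators $z$ and $e_n$ generate the correct subalgebra, and the forward convolution formula recovers the prescribed $(\beta_i)$, completing the proof.
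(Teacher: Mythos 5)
The paper offers no proof of this statement: it is quoted verbatim from \cite{IMS} (Proposition~6), so there is no in-paper argument to compare yours against. Before assessing your sketch, note that the statement as reproduced here has evidently dropped its hypothesis. The preceding sentence says the proposition ``provides the converse implication'' to the observation that graded subalgebras of uncovered algebras of type $1$ have all constituents \emph{ordinary}, and the label confirms that the intended hypothesis is that every constituent of $L$ is ordinary. Taken literally, the unconditional statement is false: the exceptional algebras of Theorem~\ref{thm:exceptional} are of type $n$ with first constituent length $q+m$ or $q+m+1$, whereas (as stated in the Introduction) a graded subalgebra of an uncovered algebra of type $1$ has first constituent length $2q$. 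Your sketch never isolates ordinariness as the operative hypothesis; you appeal to ``whatever length and spacing bounds are available,'' but for a general algebra of type $n$ the inverted sequence $(\alpha_j)$ simply does not have isolated nonzero entries, so the constituent-by-constituent induction you propose cannot close without assuming exactly the ordinary form $\beta_{j-i}=(-1)^i\binom{n-1}{i}\beta_j$ for each constituent.

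The more serious gap is existence of $N$. You argue that since an uncovered algebra of type $1$ is ``completely determined (up to isomorphism) by its sequence,'' it suffices to produce a sequence of scalars and check admissibility, and the only admissibility condition you then verify is that the nonzero $\alpha_j$ are isolated. Determination by the sequence is a \emph{uniqueness} statement; what the proposition requires is \emph{existence}, namely that the proposed multiplication table (the $\alpha_j$ together with $[e_i,z]=e_{i+1}$) actually satisfies the Jacobi identity. That verification is the entire mathematical content of the result and is absent from your proposal. Isolated nonzero entries is necessary but nowhere near sufficient: genuine sequences of algebras of type $1$ are severely constrained beyond that shape (for instance, by \cite[Theorem~5.5]{CMN} their first constituent length must equal $2q$ for a power $q$ of $p$), and even a sequence meeting all known necessary conditions does not come for free with a Lie algebra attached. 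A workable repair is to build $N$ concretely as an extension of $L$ by the finitely many new homogeneous components $Fy,Fe_2,\dots,Fe_{n-1}$, place the nonzero $\alpha_j$ at the (nonzero) final entries of the ordinary constituents of $L$ so that the convolution $\beta_i=\sum_k(-1)^k\binom{n-1}{k}\alpha_{i+k}$ reproduces the given sequence, and then check the Jacobi identity for the finitely many kinds of triples involving the new low-degree elements, the triples inside $L$ being already covered. That check, not the formal inversion of the convolution, is where the work lies, and your proposal does not engage with it.
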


While ordinary constituents necessarily have a nonzero last entry,
this is generally not the case for arbitrary constituents,
prompting the following definition.

\begin{definition}\label{def:leading_trailing}
The \emph{leading term} of a constituent is its first nonzero entry,
and the \emph{trailing term} is its last nonzero entry.
When we say that $\beta_t$ is the
leading or trailing term of a constituent, we mean this information to
include the specific index $t$ in the sequence, rather than just the
particular (nonzero) value $\beta_t$ takes in the base field.
\end{definition}

Thus, by definition the leading term of a constituent is precisely the $n$th
term counting from the end.
Consequently, if $(\beta_{i},\ldots,\beta_{j})$ and $(\beta_{j+1},\ldots,\beta_{j+m})$
are two adjacent constituents, then the length $m$ of the latter one equals
the difference between the indices of the respective leading terms
$\beta_{j-n+1}$ and $\beta_{j+m-n+1}$.
In contrast, the position of the trailing term of a constituent
within it depends on the situation.
The importance of trailing terms lies in the fact that many arguments
exploit a maximal subsequence of consecutive zero entries
in the sequence of the algebra.
Some such sequences can be found between the trailing term of
a constituent and the leading term of the following constituent.

We now quote from~\cite[Section 4]{IMS} some general facts on the constituent lengths $\ell=\ell_1,\ell_2,\ell_3,\ldots$ of an algebra $L$ of arbitrary type $n$.
They extend corresponding facts proved in~\cite{CMN} for algebras of type $1$.

\begin{lemma}[\cite{IMS}, Lemma~8]\label{lemma:ell_even}
The length $\ell$ of the first constituent of a non-metabelian algebra $L$ of type $n$ is even.
\end{lemma}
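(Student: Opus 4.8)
The plan is to read off the parity of $\ell$ from a single application of the generalized Jacobi identity, exploiting that $e_n$ self-commutes and that every entry of the sequence strictly before the leading term $\beta_{\ell-n+1}$ of the first constituent vanishes. Recall from Section~\ref{sec:sequence} that $e_i=[e_n,z^{i-n}]$ for $i\ge n$, that $\beta_i=0$ for $n<i\le\ell-n$, and that $\beta_{\ell-n+1}\neq 0$. The non-metabelian hypothesis guarantees that the sequence is not identically zero, so that such a first nonzero entry exists and $\ell$ is finite; moreover $\ell\ge 2n$, so $\ell-2n+1\ge 1$.

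First I would compute $[e_n,e_{\ell-n+1}]$ in two ways. On one hand, by the definition of the sequence and antisymmetry, $[e_n,e_{\ell-n+1}]=-\beta_{\ell-n+1}\,e_{\ell+1}$. On the other hand, writing $e_{\ell-n+1}=[e_n,z^{\ell-2n+1}]$ and applying the generalized Jacobi identity with $a=b=e_n$ and $c=z$ gives
\[
[e_n,[e_n,z^{\ell-2n+1}]]
=\sum_{j=0}^{\ell-2n+1}(-1)^j\binom{\ell-2n+1}{j}\,[e_n,z^j,e_n,z^{\ell-2n+1-j}].
\]
The next step is to evaluate each summand: using $[e_n,z^j]=e_{n+j}$, then $[e_{n+j},e_n]=\beta_{n+j}\,e_{2n+j}$, and finally bracketing with $z^{\ell-2n+1-j}$, one finds that the $j$th term equals $\beta_{n+j}\,e_{\ell+1}$, while the boundary term $j=0$ vanishes because $[e_n,e_n]=0$. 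The crucial observation is then that $\beta_{n+j}\neq 0$ forces $n+j\ge\ell-n+1$, i.e. $j\ge\ell-2n+1$, so within the range of summation only the top index $j=\ell-2n+1$ survives. Hence the right-hand side collapses to $(-1)^{\ell-2n+1}\beta_{\ell-n+1}\,e_{\ell+1}$.

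Comparing the two evaluations yields $-\beta_{\ell-n+1}=(-1)^{\ell-2n+1}\beta_{\ell-n+1}$; since $\beta_{\ell-n+1}\neq 0$ and the characteristic is odd, we may cancel it to obtain $(-1)^{\ell-2n+1}=-1$, so that $\ell-2n+1$ is odd and therefore $\ell$ is even. The main thing to get right is the bookkeeping in the middle step: correctly identifying $[e_n,z^j,e_n,z^{\ell-2n+1-j}]=\beta_{n+j}\,e_{\ell+1}$, disposing of the $j=0$ boundary term via $[e_n,e_n]=0$, and invoking the vanishing of all earlier sequence entries to isolate the single surviving summand. The odd characteristic is exactly what is needed for the final cancellation, as in characteristic two the relation $-\beta_{\ell-n+1}=\beta_{\ell-n+1}$ would be automatic and carry no parity information.
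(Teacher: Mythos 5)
Your computation is correct and is essentially the natural route to this statement: writing $e_{\ell-n+1}=[e_n,z^{\ell-2n+1}]$, expanding $[e_n,e_{\ell-n+1}]$ by the generalized Jacobi identity, and using the vanishing of $\beta_{n+1},\dots,\beta_{\ell-n}$ to collapse the sum to its top term $(-1)^{\ell-2n+1}\beta_{\ell-n+1}e_{\ell+1}$; comparison with $-\beta_{\ell-n+1}e_{\ell+1}$ then gives $(-1)^{\ell+1}=-1$. The bookkeeping of the surviving summand, the disposal of the $j=0$ term via $[e_n,e_n]=0$, and the signs are all right, and since $e_{\ell+1}\neq 0$ the cancellation is legitimate whenever $-1\neq 1$ in the base field. (Note the relevant dichotomy is characteristic two versus not: your final step works equally well in characteristic zero, which is not ``odd''.)

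The one genuine issue is the point you concede in your last sentence. The lemma as stated carries no hypothesis on the characteristic, and the paper does use the evenness of $\ell$ in characteristic two --- see the discussion after Theorem~\ref{thm:first_length} of the case $n=2$, $p=2$, where the admissible values $2q$ and $q+2$ are indeed both even. Your argument is vacuous there, so as written it proves a weaker statement than the lemma. A characteristic-free variant of the same idea closes the gap: if $\ell$ were odd, set $a=(\ell+1)/2$, which satisfies $n<a\le\ell-n+1$ because $\ell\ge 2n+1$; then
$0=[e_a,e_a]=\sum_i(-1)^i\binom{a-n}{i}\beta_{a+i}e_{2a}$,
and every index $a+i$ occurring here lies in the interval $(n,\ell-n+1]$, so only $a+i=\ell-n+1$ (that is, $i=a-n$) contributes, giving $0=(-1)^{a-n}\beta_{\ell-n+1}e_{\ell+1}\neq 0$, a contradiction in any characteristic. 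If you are content to prove the lemma only away from characteristic two --- which is all the main theorems of this paper require --- your proof is complete as written.
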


\begin{lemma}[\cite{IMS}, Lemma~9]\label{lemma:upper_bound}
Let $L$ be a non-metabelian algebra of type $n$, with first constituent of length $\ell$.
Then its sequence $(\beta_i)_{i>n}$ can contain at most $\ell-n$ consecutive zero entries.
Consequently, the constituents lengths $\ell=\ell_1,\ell_2,\ell_3,\ldots$ of $L$ satisfy $\ell_r\le\ell$ for all $r$.
\end{lemma}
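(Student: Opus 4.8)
The plan is to establish the bound on runs of zeros first, since the bound on constituent lengths then follows formally. Indeed, by definition the leading (first nonzero) term of a constituent is its $n$th entry counted from the end, so a constituent $(\beta_{j+1},\ldots,\beta_{j+m})$ of length $m$ begins with the $m-n$ zero entries $\beta_{j+1},\ldots,\beta_{j+m-n}$. Once we know that $(\beta_i)_{i>n}$ cannot contain more than $\ell-n$ consecutive zeros, this gives $m-n\le\ell-n$, that is $\ell_r=m\le\ell$; the same bound also guarantees that the first nonzero entry past any constituent exists, so that Definition~\ref{def:constituent} is well posed.

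To bound the length of a run of zeros I would pass to the type-$1$ picture supplied by Proposition~\ref{OrdinaryEnding}. Embed $L$ as a graded subalgebra of an uncovered algebra $N$ of type $1$, with $L_i=N_i$ for $i\ge n$ and $e_n=[e_1,z^{n-1}]$, and let $(\alpha_j)_{j>1}$ be the sequence of $N$. As recalled before Proposition~\ref{OrdinaryEnding}, expanding $[e_i,e_n]$ by the generalized Jacobi identity gives
\[
\beta_i=\sum_{k=0}^{n-1}(-1)^k\binom{n-1}{k}\alpha_{i+k}
\]
for every $i$, expressing each $\beta_i$ as a weighted sum over the window $\alpha_i,\ldots,\alpha_{i+n-1}$. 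Since $1<n\le p$, all the weights $\binom{n-1}{k}$ with $0\le k\le n-1$ are nonzero in the base field. Comparing the first nonzero entries of the two sequences shows that $L$ and $N$ share the same first constituent length $\ell$.

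The engine of the proof is then the corresponding statement for type $1$ from~\cite{CMN}: the sequence $(\alpha_j)$ has only isolated nonzero entries, and every constituent of $N$ has length at most that of its first, so $(\alpha_j)$ contains at most $\ell-1$ consecutive zeros. In the generic case, where consecutive nonzero entries of $(\alpha_j)$ are at least $n$ apart, every window $\alpha_i,\ldots,\alpha_{i+n-1}$ meets at most one nonzero entry; hence $\beta_i=0$ exactly when that window is entirely zero, and the nonzero $\beta_i$ occur in blocks of length $n$, one for each nonzero $\alpha_j$. A run of zeros of $(\beta_i)$ between two such blocks then has length $d-n$, where $d$ is the spacing of the corresponding nonzero entries of $(\alpha_j)$, namely the length of a constituent of $N$; the type-$1$ bound $d\le\ell$ yields exactly $d-n\le\ell-n$.

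The main obstacle is the remaining case, in which some consecutive nonzero entries of $(\alpha_j)$ are fewer than $n$ apart: then a window of length $n$ can contain several nonzero $\alpha_j$, and a $\beta_i$ may vanish through cancellation rather than through a genuine gap in $(\alpha_j)$, which is precisely the mechanism producing the non-ordinary constituents of the exceptional algebras. Here I would argue directly inside $L$, using the structure constants $[e_i,e_j]=\gamma_{i,j}e_{i+j}$ with $\gamma_{i,j}=\sum_{t=0}^{j-n}(-1)^t\binom{j-n}{t}\beta_{i+t}$ together with their antisymmetry $\gamma_{i,j}=-\gamma_{j,i}$. A run $\beta_a=\cdots=\beta_{a+g-1}=0$ forces $\gamma_{a,j}=0$ for $n\le j\le g+n-1$, so that $e_a$ centralizes $L_n\oplus\cdots\oplus L_{g+n-1}$, and the point is to show that this range cannot reach length $\ell-n+1$. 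The delicate feature is that antisymmetry by itself yields only parity information, consistent with Lemma~\ref{lemma:ell_even}, so one must feed in the exact value of $\ell$ to locate the first nonzero entry following the run. I expect this to be the technical heart of the argument, calling for the binomial/polynomial analysis developed later in the paper rather than a single application of the Jacobi identity.
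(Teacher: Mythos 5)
Your reduction of the bound $\ell_r\le\ell$ to the bound on runs of zeros is correct, but your proof of the latter has a genuine gap. The engine you propose, Proposition~\ref{OrdinaryEnding}, cannot carry the argument: as printed here its statement has lost its hypothesis, and the correct version (this is what ``provides the converse implication'' refers to) assumes that \emph{all constituents of $L$ are ordinary}. Taken literally, the unconditional statement would contradict Theorems~\ref{thm:classification} and~\ref{thm:exceptional}: graded subalgebras of uncovered algebras of type $1$ have $\ell=2q$, while the exceptional algebras of Section~\ref{sec:exceptional} have $\ell=q+m$ or $q+m+1$. So the embedding into a type-$1$ algebra is available exactly in your ``generic case'' and unavailable in the case you correctly identify as the obstacle. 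Moreover, your treatment of that remaining case is not a proof: you defer to ``the binomial/polynomial analysis developed later in the paper,'' but that analysis (Subsection~\ref{subsec:main}) presupposes the present lemma --- it needs the second constituent to exist and uses Lemma~\ref{lemma:constituent_bound}, both of which rest on Lemma~\ref{lemma:upper_bound} --- so the argument would be circular.

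A complete proof is a short, self-contained computation inside $L$, much closer to the ``argue directly'' idea you sketch, and it needs no polynomial machinery. Suppose $\beta_a=\beta_{a+1}=\dots=\beta_{a+\ell-n}=0$ is a run of $\ell-n+1$ consecutive zeros, chosen maximal on the left. A run starting at $a=n+1$ has length only $\ell-2n<\ell-n+1$, because the leading term $\beta_{\ell-n+1}$ of the first constituent interrupts it; hence $a>n+1$ and $\beta_{a-1}\neq 0$. Normalize $\beta_{\ell-n+1}=1$, so that $e_{\ell+1}=[e_{\ell-n+1},e_n]$. Expanding $[e_{a-1},e_{\ell+1}]=[e_{a-1},[e_n,z^{\ell+1-n}]]$ by the generalized Jacobi identity, every coefficient $\beta_{a-1+t}$ with $1\le t\le\ell-n+1$ has index in the run, so $[e_{a-1},e_{\ell+1}]=\beta_{a-1}e_{a+\ell}\neq 0$. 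On the other hand, the ordinary Jacobi identity gives $[e_{a-1},[e_{\ell-n+1},e_n]]=[[e_{a-1},e_{\ell-n+1}],e_n]-[[e_{a-1},e_n],e_{\ell-n+1}]$; here $[e_{a-1},e_{\ell-n+1}]=\beta_{a-1}e_{a+\ell-2n}$ and $[e_{a-1},e_n]=\beta_{a-1}e_{a+n-1}$ (all other coefficients lie in the run), and the outer brackets $[e_{a+\ell-2n},e_n]$ and $[e_{a+n-1},e_{\ell-n+1}]$ involve only coefficients $\beta_j$ with $a\le j\le a+\ell-n$, all zero. Hence $[e_{a-1},e_{\ell+1}]=0$, contradicting the first computation. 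This is exactly the one-step Jacobi argument you suspected would not suffice; it does, because the two expansions leave disjoint sets of surviving coefficients.
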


The upper bound on $\ell_r$ guarantees, in particular, that all constituent lengths are finite,
which we assumed in Definition~\ref{def:constituent}.

\begin{lemma}[\cite{IMS}, Lemma~10]\label{lemma:constituent_bound}
Let $L$ be a non-metabelian algebra of type $n$, with constituent lengths $\ell=\ell_1,\ell_2,\ell_3,\ldots$ in order of occurrence.
Then $\ell_r\ge \ell/2$ for all $r$.
\end{lemma}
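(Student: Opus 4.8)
The engine I would use is a single two-parameter family of relations among the $\beta_i$, obtained by expanding the products $[e_{n+a},e_{n+b}]$ in two ways. Writing $e_{n+b}=[e_n,z^b]$ and applying the generalized Jacobi identity, together with $[e_{n+a+j},e_n]=\beta_{n+a+j}e_{2n+a+j}$ and $[e_{2n+a+j},z^{b-j}]=e_{2n+a+b}$, gives $[e_{n+a},e_{n+b}]=\bigl(\sum_{j=0}^{b}(-1)^j\binom{b}{j}\beta_{n+a+j}\bigr)e_{2n+a+b}$. Comparing this with the expression obtained by interchanging $a$ and $b$ and using antisymmetry of the bracket yields, for all $a,b\ge 0$,
\[
\sum_{j=0}^{b}(-1)^j\binom{b}{j}\beta_{n+a+j}
=-\sum_{j=0}^{a}(-1)^j\binom{a}{j}\beta_{n+b+j}.
\]
The case $a=0$ recovers the basic recurrence $\sum_{j=0}^{b}(-1)^j\binom{b}{j}\beta_{n+j}=-\beta_{n+b}$ (with the convention $\beta_n=0$), whose split by the parity of $b$ already reproves Lemma~\ref{lemma:ell_even}. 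The point of the symmetric form is that its left-hand side is an alternating binomial, finite-difference combination of order $b$ that can be anchored at an arbitrary position $n+a$, so I can slide it to sit on any chosen window of the sequence.

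It suffices to prove $\ell\le 2\ell_r$ for every $r$. Since $\ell=\ell_1$ needs no argument, I fix $r\ge 2$ and let $\beta_{n+i}$ be the leading term of the $r$-th constituent; by the discussion following Definition~\ref{def:leading_trailing} the previous leading term then sits at index $n+i-\ell_r$, and by Definition~\ref{def:constituent} the entry $\beta_{n+i}$ is preceded by a block of zeros whose length is controlled by $\ell_r$ (at least the $\ell_r-n$ zeros lying inside the $r$-th constituent before its leading term). The target inequality $\ell\le 2\ell_r$ amounts to the claim that the initial block of zeros $\beta_{n+1}=\cdots=\beta_{\ell-n}=0$, whose length $\ell-2n$ is exactly what encodes $\ell$ via Definition~\ref{def:first_constituent}, cannot occur unless some constituent length already reaches $\ell/2$.

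The plan is to argue by a minimal counterexample, inducting on $r$: assume $\ell_1,\dots,\ell_{r-1}\ge \ell/2$ but $\ell_r<\ell/2$, and derive a contradiction. With all earlier constituents long, every nonzero entry lying before $\beta_{n+i}$ is far to its left, so I can take $b$ comparable to $\ell_r$ and anchor the order-$b$ operator of the displayed relation on the zero window immediately preceding $\beta_{n+i}$; all but a controlled set of its terms then vanish, and the relation expresses a nonzero scalar multiple of $\beta_{n+i}$ in terms of entries of the following constituents. Feeding this back into the recurrence that governs the initial zero run forces that run to extend past index $\ell-n$, producing a nonzero entry before $\beta_{\ell-n+1}$ and contradicting the minimality built into Definition~\ref{def:first_constituent}. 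Here Lemma~\ref{lemma:ell_even} is used to guarantee that the relevant binomial coefficient is a nonzero scalar (this is where oddness of $p$ enters), while Lemma~\ref{lemma:upper_bound} keeps every zero run, and hence every active window, within controllable size.

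I expect the main obstacle to be controlling the binomial-weighted contributions of the nonzero entries that do \emph{not} lie in the chosen zero window, namely the trailing terms of the $r$-th constituent itself and the leading terms of the constituents that follow it. Isolating the coefficient of $\beta_{n+i}$ so that it is provably nonzero modulo $p$, while ruling out a conspiracy of the remaining terms that would cancel the contradiction, is the delicate point; the freedom in the two parameters $a,b$, combined with the inductive hypothesis that pushes all other nonzero entries at least $\ell/2$ away from the active window, is what I would rely on to keep these interfering terms manageable.
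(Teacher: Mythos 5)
The paper does not prove this lemma itself; it quotes it from \cite{IMS}, so there is no in-paper argument to match yours against. Judged on its own, your proposal correctly identifies the right engine: the identity
$\sum_{j=0}^{b}(-1)^j\binom{b}{j}\beta_{n+a+j}=-\sum_{j=0}^{a}(-1)^j\binom{a}{j}\beta_{n+b+j}$,
obtained from $[e_{n+a},e_{n+b}]=-[e_{n+b},e_{n+a}]$ via the generalized Jacobi identity, is exactly the kind of relation these constituent-length bounds rest on (it is the same mechanism as the $E(i,h)$ relations in Section~\ref{sec:final_contradiction}). But what follows is a plan, not a proof. The decisive steps are all deferred: you never fix the parameters $a,b$, never exhibit which terms of the two sums survive, and never verify that the coefficient multiplying the leading term $\beta_{n+i}$ is nonzero modulo $p$. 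You flag this yourself as ``the delicate point,'' but it is precisely the content of the lemma; without it nothing has been shown.

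Two parts of the plan as stated would moreover fail. First, the claim that the inductive hypothesis $\ell_1,\dots,\ell_{r-1}\ge\ell/2$ makes ``every nonzero entry lying before $\beta_{n+i}$ far to its left'' is false: the trailing term of constituent $r-1$ lies within the last $n$ entries of that constituent, hence at distance between $\ell_r-n+1$ and $\ell_r$ from the leading term of constituent $r$ --- which is \emph{small} exactly under your counterexample assumption $\ell_r<\ell/2$. That nearby nonzero entry is the principal interfering term, and it cannot be pushed away by the lengths of earlier constituents; handling it is where the real work lies (compare how $\beta_M$ must be tracked throughout Section~\ref{sec:final_contradiction}). Second, the proposed contradiction mechanism --- ``feeding back'' into the initial zero run and ``producing a nonzero entry before $\beta_{\ell-n+1}$'' --- is not coherent as described: the entries $\beta_{n+1},\dots,\beta_{\ell-n}$ are fixed zeros encoding $\ell$, and a relation anchored near the $r$-th constituent is an identity among entries in a window around degree $2n+a+b$; the contradiction has to come from a concrete identity equating a provably nonzero quantity with zero, which your sketch does not produce. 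As it stands the proposal is a reasonable research plan with the correct tool, but it does not constitute a proof of the lemma.
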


Finally, we quote from~\cite[Section~5]{IMS} an internal
characterization of the constituents of an algebra $L$
of type $n$, which avoids reference to the sequence $(\beta_i)_{i>n}$,
and which we have used in the Introduction to describe them more succinctly.

\begin{prop}[\cite{IMS}, Proposition~11]\label{prop:constituent_lengths}
Let $L$ be an algebra of type $n>1$ such that $[L^2,L_n]\subseteq L^{n+3}$.
Then the constituent lengths of $L$ are given by
$\ell=\ell_1=\dim\bigl(L^2/(L^2)^2\bigr)+n$
and
$\ell_r=\dim\bigl((L^2)^r/(L^2)^{r+1}\bigr)$ for $r>1$.
\end{prop}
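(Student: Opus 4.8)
The plan is to read everything through the graded structure of $L^2=\bigoplus_{i\ge n+1}L_i$ and to show that each term $(L^2)^r$ of the lower central series of $L^2$ is in fact a graded ideal of the whole of $L$, so that it is forced to have the shape $\bigoplus_{i\ge d_r}L_i$ for a single threshold degree $d_r$. First I would record that $L=Fz\oplus Fe_n\oplus L^2$ and that the hypothesis $[L^2,L_n]\subseteq L^{n+3}$ is equivalent to $\beta_{n+1}=0$, i.e.\ to $\ell>2n$; since $\ell$ is even by Lemma~\ref{lemma:ell_even} this gives $\ell\ge 2n+2$. Because a non-metabelian algebra can have at most $\ell-n$ consecutive zero entries (Lemma~\ref{lemma:upper_bound}), its sequence has infinitely many nonzero entries, hence infinitely many constituents, so the thresholds $d_r$ are defined for all $r$ and each quotient $(L^2)^r/(L^2)^{r+1}$ is finite-dimensional. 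The reduction rests on the remark that $\ad z$ maps $L_i$ isomorphically onto $L_{i+1}$ for $i\ge n$, so any $\ad z$-invariant graded subspace contained in $L^2$ is determined by its minimal degree; and each $(L^2)^r$ is indeed an ideal of $L$ by an easy induction from the Jacobi identity, using $[L^2,L]\subseteq L^2$. Thus the statement reduces to computing $d_r$, because $\dim\bigl((L^2)^r/(L^2)^{r+1}\bigr)=d_{r+1}-d_r$.

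Next I would make the bracket structure explicit. Writing $[e_i,e_j]=c_{i,j}e_{i+j}$, the generalized Jacobi identity applied to $e_j=[e_{j-1},z]$ yields the recursion $c_{i,j}=c_{i,j-1}-c_{i+1,j-1}$ with base $c_{i,n}=\beta_i$, hence the finite-difference formula
\[
c_{i,n+m}=\sum_{t=0}^{m}(-1)^t\binom{m}{t}\beta_{i+t}\qquad(m\ge0).
\]
The qualitative feature I need is that $c_{i,n+m}$ is supported on the window $\beta_i,\dots,\beta_{i+m}$; in particular $[e_i,e_{n+1}]=(\beta_i-\beta_{i+1})e_{i+n+1}$.

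The heart of the argument is then an induction showing $d_{r+1}=a_r+n$, where $a_r$ is the index of the leading (first nonzero) term of the $r$-th constituent; recall $a_1=\ell-n+1$ and $a_s-a_{s-1}=\ell_s$. Assuming $(L^2)^r=\bigoplus_{i\ge d_r}L_i$ with $d_r=a_{r-1}+n$ (and $d_1=n+1$), the minimal degree of $(L^2)^{r+1}=[(L^2)^r,L^2]$ is the least $i+j$ with $i\ge d_r$, $j\ge n+1$ and $c_{i,j}\ne0$. Since $d_r$ is exactly the starting index of the $r$-th constituent, all entries $\beta$ with index in $[d_r,a_r-1]$ vanish, so the support window forces $i+(j-n)\ge a_r$ and hence degree $\ge a_r+n$; conversely $[e_{a_r-1},e_{n+1}]=-\beta_{a_r}e_{a_r+n}\ne0$ attains it. Being a graded ideal, $(L^2)^{r+1}$ must then equal $\bigoplus_{i\ge a_r+n}L_i$, closing the induction. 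Reading off dimensions gives $d_2-d_1=\ell-n$, whence $\ell=\dim\bigl(L^2/(L^2)^2\bigr)+n$, and $d_{r+1}-d_r=a_r-a_{r-1}=\ell_r$ for $r\ge2$.

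I expect the delicate point to be pinning down the minimal degree \emph{exactly}, and this is where the hypothesis is indispensable. To use the cheap bracket $[e_{a_r-1},e_{n+1}]$ I need $a_r-1\ge d_r$ and $a_r-1>n+1$, i.e.\ $\ell_r>n$ and $a_r>n+2$; both follow from $\ell_r\ge\ell/2>n$ (Lemma~\ref{lemma:constituent_bound}) once $\ell>2n$. Without that, the first relevant bracket degenerates into the self-bracket $c_{n+1,n+1}=\beta_{n+1}-\beta_{n+2}$, which is forced to vanish, and the clean link between $d_{r+1}$ and the leading term breaks down --- precisely the phenomenon behind algebras with $\ell=2n$ not admitting a theory of constituents. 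The main bookkeeping obstacle will be the lower bound, namely verifying that no nonzero bracket appears below degree $a_r+n$; this depends on knowing exactly which $\beta$ in each window vanish, and it is the leading run of zeros inside every constituent that makes it go through.
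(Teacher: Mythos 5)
The paper does not prove this proposition itself --- it is quoted from~\cite{IMS} with all proofs deferred there --- so a line-by-line comparison is not possible; judged on its own, your argument is correct and is the natural one, consistent with how the paper frames the result (via minimal degrees of the terms $(L^2)^r$ viewed as graded ideals). The key points all check out: each $(L^2)^r$ is a graded, $\ad z$-invariant ideal and hence of the form $\bigoplus_{i\ge d_r}L_i$; the window formula $c_{i,n+m}=\sum_t(-1)^t\binom{m}{t}\beta_{i+t}$ gives the lower bound $d_{r+1}\ge a_r+n$ because the indices $[d_r,a_r-1]$ are exactly the leading run of zeros of the $r$-th constituent; and the bracket $[e_{a_r-1},e_{n+1}]=-\beta_{a_r}e_{a_r+n}$ attains it, with the hypothesis $\ell>2n$ (plus Lemmas~\ref{lemma:ell_even} and~\ref{lemma:constituent_bound}) guaranteeing $a_r-1\ge d_r$ and $a_r-1>n+1$ so that this bracket is available and not a self-bracket. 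You correctly isolate where the hypothesis $[L^2,L_n]\subseteq L^{n+3}$ enters, matching the paper's remark that it is necessary.
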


The condition
$[L^2,L_n]\subseteq L^{n+3}$
is necessary for Proposition~\ref{prop:constituent_lengths} to hold,
and is equivalent to requiring
$\beta_{n+1}=0$.
In turn, the latter is equivalent to
$\ell$ exceeding its smallest allowed value $2n$.
Because $\ell$ is even according to Lemma~\ref{lemma:ell_even},
the condition actually implies $[L^2,L_n]\subseteq L^{n+4}$.

\section{Polynomial arguments}~\label{sec:polynomials}

In this section we present a result on polynomials
which might be of independent interest,
but will be crucial to prove Theorem~\ref{thm:first_length},
and also provides some intuition for those particular
values of $\ell$ which it allows.

In fact, the main argument in our proof of Theorem~\ref{thm:first_length},
in Subsection~\ref{subsec:main},
will show the vanishing of a certain range of coefficients in the product
$(x-1)^{\ell-n+1}g(x)$,
where $g(x)$ is a polynomial of degree $n-1$ encoding the last $n$ entries of the first constituent of an algebra of type $n$.
Theorem~\ref{thm:polynomials} below shows that
such condition on $(x-1)^{\ell-n+1}g(x)$
restricts the possibilities for $\ell$ to those in the conclusion
of Theorem~\ref{thm:first_length} and a few more,
the latter to be excluded later by different calculations in $L$.

In broad terms, writing $k$ in place of $\ell-n+1$,
and letting $g(x)$ a polynomial of degree $n-1$
over a field $F$ of positive characteristic $p$,
those conditions on $(x-1)^kg(x)$ say
that roughly the upper half of its coefficients
vanish, necessarily excluding the leading term.
Following standard notation we denote by $[x^j] f(x)$ the coefficient of $x^j$ in a polynomial $f(x)$.
To gain some intuition on Theorem~\ref{thm:polynomials}
consider $n=1$ (not included there) as a simple model.
Then the hypothesis of Theorem~\ref{thm:polynomials} would read
$[x^j](x-1)^k=0$ for $(k+1)/2\le j<k$,
and would easily imply that $k$ equals either $q$ or $2q$, for some power
$q$ of $p$.
In fact, because of the binomial identity $\binom{k}{k-j}=\binom{k}{j}$
the assumed condition would extend to the range $0<j<k$ when $k$ is odd,
and the same range with the exception of $j=k/2$ when $k$ is even.
In the former case we would find $(x-1)^k=x^k-1$,
and then there are several ways to deduce $k=q$.
In the latter case we would get $(x-1)^k=x^k+ax^{k/2}+1$,
and then it is not much harder to infer that either $k=q$ or $k=2q$,
for example as in~\cite[Lemma~1]{Mat:chain_lengths}.

The above basic facts can also be proved using Lucas' theorem,
a useful  tool for evaluating a binomial coefficient
$\binom{a}{b}$ modulo a prime $p$:
if $a$ and $b$ are non-negative integers with $p$-adic expansions $a=a_0+a_1p+\cdots+a_rp^r$ and $b=b_0+b_1p+\cdots+b_rp^r$, then
\[
\binom{a}{b}=\prod_{i=0}^{r}\binom{a_i}{b_i} \pmod{p}.
\]

The following preparatory result is akin to the special case $n=2$
of Theorem~\ref{thm:polynomials}, but not quite the same.
It comes in two variants, differing only when $k$ is odd,
both of which will be needed in the proof of Theorem~\ref{thm:polynomials}.

\begin{lemma}[\cite{IMS}, Lemma~16]\label{lemma:polynomials}
Let $F$ be a field of characteristic $p$ and let $x-a \in F[x]$. Suppose that, for a natural number $k>1$, we have
\begin{equation}\label{eq:range_lemma}
[x^j](x-1)^k(x-a)=0 \quad\text{for } k/2+1\le j\le k.
\end{equation}
Then the pair $(k,a)$ is one of the following: either $(2,-2)$, or $(3,-3)$, or $(q-1,1)$, or $(q,0)$, or $(2q-1,1)$, where $q$ is a power of $p$.

Moreover, if Equation~\eqref{eq:range_lemma} holds in the
range $(k+1)/2\le j\le k$, then the pairs $(3,-3)$ and $(2q-1,1)$ can be excluded from the conclusion.
\end{lemma}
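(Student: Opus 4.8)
The plan is to convert \eqref{eq:range_lemma} into relations among binomial coefficients and then bring in Lucas' theorem. Reading off the coefficient of $x^j$ in $(x-1)^k(x-a)$, the hypothesis is equivalent to
\[
\binom{k}{j-1}+a\binom{k}{j}=0\quad\text{in }F,\qquad k/2+1\le j\le k.
\]
The instance $j=k$ gives $k+a=0$, so $a=-k$ is forced and the remaining relations involve $k$ only. For $k\in\{2,3\}$ the range collapses to the single index $j=k$, so $a=-k$ is the only constraint, and we obtain the exceptional pairs $(2,-2)$ and $(3,-3)$. I would therefore assume $k\ge4$ from here on, which guarantees that $j=k-1$ also lies in the range.

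With $a=-k$ substituted, I would clear denominators in each relation using the integer identity $j\binom{k}{j}=(k-j+1)\binom{k}{j-1}$, obtaining the necessary condition
\[
(k+1)(j-k)\binom{k}{j}=0\quad\text{in }F,\qquad k/2+1\le j\le k.
\]
Evaluating this at $j=k-1$ gives $k(k+1)=0$, so either $p\mid k+1$ or $p\mid k$; the two are mutually exclusive. If $p\mid k+1$ then $a=1$ and the polynomial is simply $(x-1)^{k+1}$, whose coefficients are required to vanish throughout the upper half of their range short of the leading term, which is precisely the model case $n=1$ discussed above. The argument recalled there (the symmetry of $(x-1)^{k+1}$ together with Lucas' theorem, as in \cite[Lemma~1]{Mat:chain_lengths}) yields $k+1\in\{q,2q\}$, hence the pairs $(q-1,1)$ and $(2q-1,1)$.

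If instead $p\mid k$, then $a=0$ and the polynomial is $x(x-1)^k$, so the hypothesis becomes $\binom{k}{i}\equiv0\pmod p$ for all $i$ with $\lceil k/2\rceil\le i\le k-1$. Showing that this forces $k$ to be a power $q$ of $p$ is the crux of the argument. Writing $k=\sum_i k_ip^i$ in base $p$ and letting $s$ be the least index with $k_s\ne0$, I would argue that if $k$ is not a power of $p$ then $k\ge 2p^s$, so that $i:=k-p^s$ satisfies $\lceil k/2\rceil\le i\le k-1$; but Lucas' theorem gives $\binom{k}{k-p^s}=\binom{k}{p^s}=k_s\not\equiv0\pmod p$, contradicting the vanishing. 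Conversely every power $k=q$ works, since $\binom{q}{i}\equiv0\pmod p$ for all $0<i<q$. This produces the pair $(q,0)$ and completes the list. The delicate points here are verifying that the chosen index $i$ genuinely lands in the prescribed range for every digit configuration, and treating the borderline value $k=2p^s$, where $i=k/2$ sits exactly at the lower endpoint.

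For the final assertion, enlarging the range to $(k+1)/2\le j\le k$ leaves it unchanged when $k$ is even and, when $k$ is odd, adds exactly the single condition at $j=(k+1)/2$. For odd $p$ the members of the list with $k$ odd are $(3,-3)$, $(2q-1,1)$ and $(q,0)$, so I would simply evaluate the new coefficient on each. For $(2q-1,1)$ it equals $[x^q](x-1)^{2q}=-2$, and for $(3,-3)$ it equals $[x^2]\bigl((x-1)^3(x+3)\bigr)=-6$; both are nonzero (the latter provided $p>3$), so these two pairs fail the enlarged hypothesis and drop out. By contrast, for $(q,0)$ the new coefficient is $[x^{(q-1)/2}](x-1)^q=0$, because $(x-1)^q=x^q-1$, so that pair survives, as it must. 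In characteristics $2$ and $3$ the labels $(3,-3)$ and $(2q-1,1)$ coincide with other entries already on the list and may be discarded without further comment.
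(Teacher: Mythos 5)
The paper does not actually prove this lemma: it is quoted verbatim from \cite{IMS} (Lemma~16 there), so there is no in-paper argument to compare yours against. Judged on its own, your proof is correct and complete. The reduction $a=-k$ from $j=k$, the substitution of $\binom{k}{j-1}=k\binom{k}{j}$ into the integer identity $j\binom{k}{j}=(k-j+1)\binom{k}{j-1}$ to get $(k+1)(j-k)\binom{k}{j}=0$, and the evaluation at $j=k-1$ (legitimately in range once $k\ge4$) to force $p\mid k(k+1)$ are all sound; crucially, in each of the two resulting cases you return to the \emph{original} vanishing conditions rather than the weakened ones, so no information is lost. The case $p\mid k+1$ correctly collapses to the $(x-1)^{k+1}$ model treated in Section~\ref{sec:polynomials} of this paper (giving $k+1\in\{q,2q\}$), and in the case $p\mid k$ your choice $i=k-p^s$ does land in $[\lceil k/2\rceil,k-1]$ whenever $k$ is not a power of $p$, since then $k\ge 2p^s$; the boundary case $k=2p^s$ gives $i=k/2=\lceil k/2\rceil$, which is admissible. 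The treatment of the enlarged range is also right.

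One small inaccuracy in your closing sentence: in characteristic $3$ the pair $(2q-1,1)$ does \emph{not} coincide with any other entry of the list (e.g.\ $(5,1)$ for $q=3$ is not of the form $(q'-1,1)$ or $(q',0)$). This is harmless, because your direct computation $[x^q](x-1)^{2q}=-2\ne0$ already excludes $(2q-1,1)$ for every odd $p$, including $p=3$; only $(3,-3)$ genuinely needs the coincidence argument there (it becomes $(q,0)$ with $q=3$), and in characteristic $2$ both labels are absorbed into $(q-1,1)$ as you say. I would rephrase that last sentence so the two exceptional characteristics are not lumped together.
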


The special case of Lemma~\ref{lemma:polynomials}
where $a=1$ recovers the basic facts on $(x-1)^{k+1}$
recalled in the above discussion
(which was expressed in terms of $(x-1)^k$ there).

\begin{thm}\label{thm:polynomials}
Let $F$ be a field of positive characteristic $p$,
and let $n$ and $k$ be integers with $1<n<p$ and $k>n+1$.
Suppose there is a monic polynomial $g(x)\in F[x]$, of degree $n-1$,
such that
\begin{equation}\label{eq:range}
[x^j](x-1)^kg(x)=0 \quad\text{for } (k+n)/2\le j<k.
\end{equation}
Then the following assertions hold.
\begin{itemize}
\item[(a)]
If $k\ge 4p$, then
$k=2q-n+1$
or $q-n<k<q+n$,
where $q>p$ is a power of $p$.
\item[(b)]
If $k<4p$, then k belongs to one of the following intervals:
$n+1<k<p$,
$2p-n<k<2p$,
$3p-n<k<3p$,
$k=4p-n+1$.

\item[(c)]
If $k=2q-n+1$,
then $g(x)=(x-1)^{n-1}$.
\item[(d)]
If $k=q-p+k_0$, with $p-n<k_0<p$, then $(x-1)^{p-k_0}$ divides $g(x)$.
\item[(e)]
If $k=q+k_0$ with $0<k_0<n$, then $x^{k_0}$ divides $g(x)$.
\end{itemize}
\end{thm}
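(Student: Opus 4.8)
The plan is to translate~\eqref{eq:range} into a statement about binomial coefficients and then exploit the base-$p$ structure of $(x-1)^k$. Writing $g(x)=\sum_{i=0}^{n-1}c_ix^i$ with $c_{n-1}=1$, the coefficient $[x^j](x-1)^kg(x)$ equals, up to the global sign $(-1)^{k-j}$, the convolution $\sum_{i=0}^{n-1}(-1)^ic_i\binom{k}{j-i}$, so that~\eqref{eq:range} asks a sliding length-$n$ window of the binomial sequence $\binom{k}{\cdot}$, weighted by the coefficients of $g$, to vanish throughout $(k+n)/2\le j\le k-1$. The two features I would lean on are that $g$ acts only \emph{locally} (each $[x^j](x-1)^kg(x)$ involving just the $n$ consecutive binomials $\binom{k}{j},\dots,\binom{k}{j-n+1}$), and that $(x-1)^k$ factors along the base-$p$ expansion $k=aq+b$, where $q=p^s$ is the largest power of $p$ with $q\le k$, $a\in\{1,\dots,p-1\}$ and $0\le b<q$. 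Indeed $(x-1)^k=(x^q-1)^a(x-1)^b$, so with $Q(x):=(x-1)^bg(x)$ of degree $b+n-1$ one gets the block decomposition $(x-1)^kg(x)=\sum_{t=0}^a\binom{a}{t}(-1)^{a-t}x^{tq}Q(x)$, the $t$-th block occupying degrees $[tq,\,tq+b+n-1]$, and Lucas' theorem controls the support of each block.

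I would first dispose of the \emph{disjoint} case $b\le q-n$, in which consecutive blocks do not overlap. Here the top block $t=a$ occupies degrees $[k-b,\,k+n-1]$, so on the portion $[k-b,\,k-1]$ of the window (which lies inside it, since $k-b=aq\ge(k+n)/2$ here) one simply reads $[x^j](x-1)^kg(x)=[x^{j-aq}]Q(x)$; thus the hypothesis forces the bottom $b$ coefficients of $Q=(x-1)^bg$ to vanish, hence $x^b\mid g$, and as $\deg g=n-1$ this forces $b\le n-1$ (excluding every larger $b$ in the disjoint range). Moreover $a\ge 2$ is impossible: the block $t=a-1$ then contributes a nonzero leading coefficient $\pm a$ at degree $k-q+n-1$, which lies in the window but which no other disjoint block can cancel. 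So the disjoint case yields exactly $a=1$ and $0\le b\le n-1$, i.e.\ $k=q+b$ with $0\le b<n$; this is the upper half of the $q$-neighbourhood of part~(a) together with part~(e).

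The \emph{overlapping} case $b>q-n$ is the crux and the main obstacle, because now adjacent blocks interfere over a band of width $b+n-q$ and genuine cancellations are possible. Writing $b=q-r$ with $1\le r\le n-1$, I expect the admissible outcomes to reduce to two. If $a=1$, then $k=2q-r$ and $(x-1)^kg(x)=x^qQ(x)-Q(x)$; the window forces a long block of coefficients of $Q$ to vanish, and after subtracting the compelled leading behaviour one is left with $(x-1)^{q-r}\bigl(g-(x-1)^{n-1}\bigr)$ supported away from a long middle interval. This is a self-similar sub-problem with a polynomial of strictly smaller degree, which I would settle by induction on $n$ with Lemma~\ref{lemma:polynomials} as the base case; it should collapse only when $r=n-1$ and $g=(x-1)^{n-1}$, giving $k=2q-n+1$ (part~(c)) and simultaneously excluding the spurious band $2q-n+1<k<2q$. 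The other admissible configuration is $k$ just below a power of $p$, namely the band $q-n<k<q$ of part~(a); here $k=q-p+k_0$ with $p-n<k_0<p$, and a dual coefficient-matching at the high end—reflecting the disjoint argument—should force $(x-1)^{p-k_0}\mid g$, which is part~(d) and is the most delicate boundary case. The genuine difficulty is that the $n-1$ free coefficients of $g$ really can annihilate several window coefficients, so ruling out \emph{all} other values of $k$ requires showing, through Lucas' theorem and the digit pattern of $k$, that some run of nonzero binomials longer than the band width $n$ always survives in the window.

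Finally, parts~(a) and~(b) are assembled by recording which $k$ survive the analysis above. The threshold $k\ge 4p$ guarantees that the relevant power $q$ exceeds $p$ and that the $q$- and $2q$-neighbourhoods, together with the band just below powers of $p$, stay disentangled, yielding the clean dichotomy of part~(a). For $k<4p$ the small powers $q\in\{p,2p,3p\}$ and their doublings overlap, and the same digit bookkeeping produces precisely the short enumerated intervals of part~(b). I expect the overlapping analysis of the previous paragraph—rather than any single clean identity—to be the hardest and longest component, exactly as the text's comparison with~\cite{IMS} anticipates.
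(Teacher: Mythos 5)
Your ``disjoint'' case is essentially sound: with $k=aq+b$, $q$ the largest power of $p$ with $q\le k$ and $b\le q-n$, the blocks $x^{tq}(x-1)^bg(x)$ really do not interfere, the window does force $x^b\mid g$ (hence $b<n$), and for $a\ge 2$ the surviving leading coefficient $-a$ of the block $t=a-1$ at degree $k-q+n-1$ does lie in the window and gives a contradiction. This part is correct, and it uses a different decomposition from the paper, which instead writes $k=k'p+k_0$ with $0\le k_0<p$ and factors out a \emph{single} power of $p$.

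The genuine gap is the overlapping case, which you yourself call ``the crux and the main obstacle'' and then do not prove. Everything there is phrased as ``I expect'', ``should collapse'', ``should force'': the proposed induction on $n$ for $a=1$, $k=2q-r$ is not set up (the sub-problem you describe does not satisfy the hypotheses of the theorem with a smaller $n$, so there is no inductive statement to invoke), the case $a\ge 2$ with $b>q-n$ (e.g.\ $k$ just below $3q$ or $4q$) is not addressed at all, and parts (b), (c), (d) are only gestured at. The idea that ``some run of nonzero binomials longer than $n$ always survives'' is exactly the hard point, and no mechanism is offered to establish it. The paper's device, which your plan lacks and which is what makes the problem finite, is to reduce modulo $p$ rather than modulo the largest $q$: since $\deg\bigl((x-1)^{k_0}g(x)\bigr)=k_0+n-1<2p$, its section in the congruence class $k_0+n-1 \pmod p$ has at most \emph{two} terms, so the hypothesis collapses to a condition on $(x-1)^{k'}(x-a)$ for a single scalar $a$, which is precisely the two-term situation classified by Lemma~\ref{lemma:polynomials}; the exceptional pairs of that lemma are what produce the lists in (a) and (b), and separate short arguments (dividing by $x^q-1$, respectively analysing $(x-1)^{k_0}g(x)=x^pa(x)+b(x)$) give (c), (d), (e). Without this reduction, or a worked-out replacement for it, your outline does not constitute a proof.
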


Theorem~\ref{thm:polynomials} is similar to~\cite[Theorem~17]{IMS},
which deals with the case $n=p$ not covered here.
However, the conclusions are slightly more complicated here,
and the differences with the proof of~\cite[Theorem~17]{IMS}
are significant enough to justify writing out a full proof of
Theorem~\ref{thm:polynomials}.
Among all the possibilities for $k$ in the conclusions we have singled out
the typical ones in Assertion~(a), with those of Assertion~(b)
representing exceptional small values.
The hypothesis $\ell>4p$ in Theorem~\ref{thm:first_length} will serve
precisely to exclude the values in Assertion~(b),
when applying the above polynomial result in its proof.

Our hypothesis $k>n+1$ in Theorem~\ref{thm:polynomials} is due to the condition expressed in Equation~\eqref{eq:range} being void otherwise.
Note that the assumption on $n$ in Theorem~\ref{thm:polynomials}
implies $p>2$.

\begin{proof}[Proof of Theorem~\ref{thm:polynomials}]
Write $k=k'p+k_0$ with $0\le k_0<p$,
whence we may write
\begin{equation*}
(x-1)^k g(x)= (x^p-1)^{k'} (x-1)^{k_0} g(x).
\end{equation*}
Our proof is broadly similar to the proof of~\cite[Theorem~17]{IMS},
except that here we distinguish two cases,
depending on whether $k_0\le p-n$, or not.
The former case is an extension of the simplest case in the proof
of~\cite[Theorem~17]{IMS},
which was the case $k_0=0$ there.
The differences with the proof of~\cite[Theorem~17]{IMS}
are significant enough to justify spelling out our  arguments in full.

Thus, write $g(x)=\sum_{i=0}^{n-1}g_ix^i$, whence $g_{n-1}=1$.
It will be convenient to define $g_i$ as zero when $i$ does not belong
to the range $0\le i<n$.
For a polynomial $f(x)\in F[x]$, and an integer $i$,
denote by $S_i\bigl(f(x)\bigr)$ the polynomial obtained from $f(x)$ by discarding all terms
where $x$ appears with an exponent not congruent to $i$ modulo $p$.
Consequently, $f(x)=\sum_{i=0}^{p-1}S_i\bigl(f(x)\bigr)$.

If $k_0\le p-n$, then
\begin{equation*}
    S_{k_0+n-1} \bigl( (x-1)^{k}g(x) \bigr)=(x^p-1)^{k'} x^{k_0+n-1}.
\end{equation*}
Equation~\eqref{eq:range} then implies
\begin{equation*}
[x^j](x^p-1)^{k'} x^{k_0+n-1}=0 \quad\text{for } (k+n)/2\le j<k,
\end{equation*}
which means
\begin{equation*}
[x^j](x^p-1)^{k'} =0 \quad\text{for } \frac{k' p}{2} - \frac{k_0+n-2}{2} \le  j < k' p + 1 - n.
\end{equation*}
Because $1<n<p$ this implies
\begin{equation*}
[x^j](x-1)^{k'} =0 \quad\text{for } \frac{k'}{2} \le j < k'.
\end{equation*}
As we recalled near the beginning of this section,
this condition forces $k'$ to be a power of $p$
(including possibly $p^0=1$).
We conclude $q\le k\le q+p-n$, with $q>1$ a power of $p$.

Consider now the case $k_0 > p-n$.
Because $p \le k_0 + n -1 < 2p$,
the only powers of $x$ in the product $(x-1)^{k_0} g(x)$
whose exponents are congruent to $k_0+n-1$ modulo $p$
are $x^{k_0+n-1}$ and $x^{k_0+n-1-p}$. Hence
\[
S_{k_0+n-1} \bigl((x-1)^{k_0} g(x)\bigr) = x^{k_0+n-1} - a x^{k_0+n-1-p},
\]
where
\[
a=-\sum_{s}(-1)^{k_0-s} \binom{k_0}{s}g_{k_0+n-p-1-s}.
\]
Consequently, we find
\begin{equation*}
S_{k_0+n-1}\bigl((x-1)^k g(x)\bigr)
=
(x^p-1)^{k'} (x^p-a)x^{k_0+n-p-1}.
\end{equation*}
Equation~\eqref{eq:range} then implies
$[x^j] (x^p-1)^{k'} (x^p-a)=0$
for
\[
\frac{k+n}{2} - k_0-n+p+1 \le j < k - k_0-n+p+1,
\]
which is equivalent to
$[x^j](x-1)^{k'} (x-a) =0$ for
\begin{equation*}
\frac{k'}{2}+1- \frac{k_0+n-2}{2p} \le j < k'+1-\frac{n-1}{p}.
\end{equation*}
Because $1<n<p$ we conclude
\begin{equation*}
[x^j](x-1)^{k'} (x-a) =0 \quad\text{for } \frac{k'}{2}+1 \le j \leq k', \end{equation*}
and even
\begin{equation*}
[x^j](x-1)^{k'} (x-a) =0 \quad\text{for } \frac{k'+1}{2} \le j \leq k' \end{equation*}
in case $k_0>p-n+1$.
Lemma~\ref{lemma:polynomials} then tells us that $k'$, if positive, can only equal $1$, $2$, $3$, $q'-1$, $q'$, or $2q'-1$, where $q'>1$ is a power of $p$.
Moreover, $k'$ cannot equal $3$ or $2q'-1$ if $k_0>p-n+1$.
Consequently, $k$ belongs to one of the following intervals, where $q>1$ is a power of $p$:
\[
n+1<k<p,
\qquad
2p-n<k<2p,
\qquad
3p-n<k<3p,
\qquad
k=4p-n+1,
\]
\[
q-n<k<q,
\qquad
q+p-n<k<q+p,
\qquad
k=2q-n+1.
\]

Together with the intervals $q\le k\le q+p-n$ found earlier
when $k_0\le p-n$,
these give the desired conclusions for $k$ stated in Assertions~(a) and~(b),
except for allowing the
unwanted intervals $q+n\le k<q+p$, for $q>p$, which we exclude next.
If $k=q+k_0$ with $0<k_0<p$,
we may write $(x-1)^k=(x^{q}-1)(x-1)^{k_0}$.
Because
Equation~\eqref{eq:range} holds, in particular, for $q\le j<q+k_0$,
we deduce
\[
[x^j](x-1)^{k_0} g(x)=0 \quad\text{for } 0 \le j< k_0.
\]
Because $x^{k_0}$ and $(x-1)^{k_0}$ are coprime it follows that $x^{k_0}$ divides $g(x)$.
Because $g(x)$ has degree $n-1$, this gives a contradiction
if $q+n\le k<q+p$.
This proves Assertion~(e) as well.

It remains to prove Assertions~(c) and~(d), which provide additional
information when $q-n<k<q$ or $k=2q-n+1$, with $q>p$.
Thus, assume $k=q-p+k_0$ with $p-n<k_0<p$.
Here Equation~\eqref{eq:range} reads
\begin{equation*}
[x^j] (x-1)^{q-p}(x-1)^{k_0}g(x)=0 \quad\text{for }
(q-p+k_0+n)/2\le j<q-p+k_0.
\end{equation*}
Now we expand $(x-1)^{q-p}=(x^q-1)/(x^p-1)$ as $x^{q-p}+x^{q-2p}+\cdots+x^p+1$,
and then restrict the range of $j$ to an interval where at most
the first two terms of this expansion matter.
Note that under our assumptions on $q$, $n$ and $k_0$ we have
$q-2p+k_0\ge (q-p+k_0+n)/2$.
In fact, this means $q+k_0\ge 3p+n$,
which holds when $p>3$ because $q\ge p^2\ge 5p$,
but also when $p=3$ because $n=2$ and $k_0=2$ in that case.
Hence we infer
\begin{equation*}
[x^j] x^{q-2p}(x^p+1)(x-1)^{k_0}g(x)=0 \quad\text{for } q-2p+k_0\le j < q-p+k_0,
\end{equation*}
which amounts to
\[
[x^j] (x^p+1)(x-1)^{k_0}g(x)=0 \quad\text{for } k_0\le j < p+k_0.
\]
If we further restrict the range of $j$ in the above equation,
looking only at terms of degree less than $p$, we find
\begin{equation*}
[x^j] (x-1)^{k_0}g(x)=0 \quad\text{for } k_0\le j < p.
\end{equation*}
Hence $(x-1)^{k_0}g(x)=x^pa(x)+b(x)$, with $a(x)$ and $b(x)$ polynomials of degree less than $k_0$.
Because $1$ is a root of $x^pa(x)+b(x)$ with multiplicity at least $k_0$, the derivatives
of this polynomial up to order $k_0-1$ also have $1$ as a root.
Since $x^p$ has zero derivative the analogous conclusion holds for the corresponding derivatives of $a(x)+b(x)$.
Because $k_0<p$ this implies that $a(x)+b(x)$ has $1$ as a root with multiplicity at least $k_0$, which exceeds its degree,
whence $a(x)+b(x)$ is the zero polynomial.
Consequently,
$(x-1)^{k_0}g(x)=x^pa(x)-a(x)=(x-1)^p a(x)$,
and hence $(x-1)^{p-k_0}$ divides $g(x)$, as desired.
This proves Assertion~(d).

Finally, to prove Assertion~(c),
when $k=2q-n+1$ Equation~\eqref{eq:range} reads
\begin{equation*}
[x^j] (x-1)^{2q-p}(x-1)^{p-n+1}g(x)=0
\quad\text{for }
q<j<2q-n+1.
\end{equation*}
Because $(x-1)^{2q-p}=(x^q-1)\sum_{i=0}^{q/p-1}x^{ip}$,
its restriction to the range $q<j<q+p$ implies
\begin{equation*}
[x^j] (x-1)^{p-n+1}g(x)=0
\quad\text{for }
0<j<p,
\end{equation*}
whence
$(x-1)^{p-n+1}g(x)=x^p-b$
for some $b\in F$.
Because $n<p$ this polynomial has $1$ as a root, hence $b=1$,
and the desired conclusion $g(x)=(x-1)^{n-1}$ follows.
\end{proof}

\section{Proof of Theorem~\ref{thm:first_length}}\label{sec:first_length_proof}

Let $L$ be an algebra of type $n$, hence generated by $z$ and $e_n$ with the usual meaning, and with first constituent length $\ell>4p$. Our main argument, in Subsection~\ref{subsec:main}, will use information on the first and second constituent,
 as made explicit in Equations~\eqref{eq:first_const}--\eqref{eq:second_const_end},
together with Lemma~\ref{lemma:constituent_bound},
to show that the last $n$ two-step centralizers of the first constituent satisfy a linear recurrence (in their range).
This can be interpreted as a condition on the coefficients of a certain polynomial which will allow an application of Theorem~\ref{thm:polynomials}.
However, the conclusions of this argument will allow for some values for $\ell$, namely $q+n <\ell \le q+2n-3$,
which are not genuine possibilities, and require further calculations to exclude.
We do that in the second part of the proof, which begins with Subsection~\ref{subsec:spurious} and is considerably more complicated.
In fact, excluding certain configurations, described in
Theorem~\ref{thm:final_contradiction}, will require
the whole Section~\ref{sec:final_contradiction}.

\subsection{The main argument}\label{subsec:main}
For convenience we state Equations~\eqref{eq:first_const}--\eqref{eq:second_const_end},
which encode the first and second constituent of $L$,
where we have conveniently normalized $\beta_{\ell-n+1}=1$:
\begin{subequations}
\begin{align}
[e_i, e_n]&=0 \qquad\text{for }n<i\le\ell-n=k-1,
 \label{eq:first_const}\\
[e_k, e_n]=[e_{\ell-n+1},e_n] &=e_{\ell+1}=e_{k+n},
 \label{eq:first_const_end}\\
[e_{k+n-1+j},e_n]=[e_{\ell+j},e_n]&=0\qquad\text{ for }0<j\le \ell_2-n,
 \label{eq:second_const}\\
[e_{k+\ell_2},e_n]=[e_{\ell+\ell_2-n+1},e_n]&=\beta_{\ell+\ell_2-n+1}e_{\ell+\ell_2+1}\ne 0.
 \label{eq:second_const_end}
\end{align}
\end{subequations}

Note that at this stage we have no information on
$[e_i, e_n]$ for $\ell-n+1<i\le\ell$,
as we do not know the corresponding entries $\beta_{\ell-n+2},\ldots,\beta_{\ell}$ of the first constituent.
We will use Equations~\eqref{eq:first_const}--\eqref{eq:second_const_end}
to obtain information on the adjoint action of $e_{\ell+1}$.

Our arguments will frequently involve
expanding certain Lie brackets $[e_a, e_b]$ with $a, b \ge n$.
It is then convenient to note once and for all that
\begin{align*}
[e_a, e_b] = \sum_{i} (-1)^i \binom{b-n}{i} \beta_{a+i} e_{a+b}.
\end{align*}
This is because
$[e_a, e_b] = [e_a, [e_n, z^{b-n}]]$ and
\begin{align*}
[e_a, [e_n, z^{b-n}]] & = \sum_{i} (-1)^i \binom{b-n}{i} [e_{a+i}, e_n, z^{b-n-i}]\\
& = \sum_{i} (-1)^i \binom{b-n}{i} \beta_{a+i} e_{a+b}.
\end{align*}
As we have just done here, we will also conveniently
omit the explicit summation range
$0\le i\le b-n$ because the binomial coefficient involved
vanishes outside that range.

Our first observation is that $e_{\ell+1}$ centralizes $e_{j}$ for $n\le j<\ell_2$, namely,
\[
[e_{\ell+1},e_{j}]
=\sum_{i} (-1)^i\ \binom{j-n}{i} \beta_{\ell+1+i} e_{\ell+1+j}
=0,
\]
because $\beta_{\ell+1+i}=0$ for $0\le i<\ell_2-n$.
Consequently, after interchanging the entries of $[e_{\ell+1},e_{j}]$ and expanding by the generalized Jacobi identity we find
\begin{align*}
0=[e_{j},e_{\ell+1}]
=\sum_{i} (-1)^i \binom{\ell+1-n}{i} \beta_{j+i} e_{j+\ell+1}.
\end{align*}
Now note that $j\le j+i\le j+\ell-n+1\le\ell+\ell_2-n$.
Hence $\beta_{j+i} = 0$ in the above summation except possibly for
$\ell-n<j+i\le\ell$,
that is,
$\ell-n-j<i\le\ell-j$.
After an appropriate change of summation index, we conclude
\begin{equation*}\label{linear recurrence}
\sum_{i} (-1)^{j+i} \binom{\ell-n+1}{\ell-j-i} \beta_{\ell-i}=0
\qquad \text{for } n\le j<\ell_2.
\end{equation*}

The left-hand side of the above equation
equals the coefficient of $x^{\ell-j}$ in the product $(1-x)^{\ell-n+1}g(x)$, where
\[
g(x)=\beta_{\ell-n+1}x^{n-1}+\beta_{\ell-n+2}x^{n-2}+\cdots+\beta_{\ell},
\]
a polynomial of degree $n-1$, which is actually monic due to our normalization $\beta_{\ell-n+1}=1$.
Thus, we have found that $g(x)$ satisfies
\begin{equation}\label{fromLinRecurToZeroCoef}
[x^j] (x-1)^{\ell-n+1} g(x)=0 \qquad \text{for } \ell-\ell_2<j\le\ell-n.
\end{equation}
Because $\ell_2\ge \ell/2$ according to Lemma~\ref{lemma:constituent_bound}, and setting $k=\ell-n+1$, this implies
\begin{equation*}
[x^j] (x-1)^k g(x)=0 \text{\quad for } (k+n+1)/2\le j<k.
\end{equation*}
Because $k+n$ is odd according to Lemma~\ref{lemma:ell_even}, the range is equivalent to $(k+n)/2\le j<k$,
and so this condition is equivalent to Equation~\eqref{eq:range} in Theorem~\ref{thm:polynomials}.
In addition, here $k>4p-n+1$ because of our hypothesis $\ell>4p$.
Thus, Theorem~\ref{thm:polynomials} yields that
either $k=2q-n+1$ or $q-n<k<q+n$, where $q>p$ is a power of $p$.
Thus, in terms of $\ell=k+n-1$ we conclude that either $\ell=2q$ or $q\le\ell\le q+2n-2$.
Because $\ell$ must be even according to
Lemma~\ref{lemma:ell_even},
these inequalities effectively mean
$q<\ell< q+2n-2$.

\subsection{Excluding the spurious possibilities for $\ell$}\label{subsec:spurious}
In order to complete a proof of Theorem~\ref{thm:first_length} it remains to rule out the possibilities $q+n+1 \le \ell\le q+2n-3$
for the length of the first constituent,
which means $q+2 \le k\le q+n-2$ in terms of $k$.

Thus, assume $k=q+k_0$, where $q>p$ is a power of $p$ and $k_0$ is an integer with $2 \le k_0 \le n-2$.
This easily implies $\ell_2\le q$. In fact, for $\ell_2 > q$ we get
\[
\ell - \ell_2 = q + k_0 + n-1 - \ell_2 < k_0 + n -1.
\]
Since $(x-1)^k=(x^q-1)(x-1)^{k_0}$ we have
\[
[x^{k_0+n-1}](x-1)^k g(x)=-[x^{k_0+n-1}](x-1)^{k_0}g(x)=-1,
\]
in contradiction with the fact that $[x^{k_0+n-1}](x-1)^k g(x) = 0$, according to Equation~\eqref{fromLinRecurToZeroCoef}.

With some harder work we will show that $\ell_2=q$ or $q-1$,
and then show how either possibility leads to a contradiction.

Theorem~\ref{thm:polynomials} provides further information on $g(x)$, namely, that it is a multiple of $x^{k_0}$.
That means
\begin{equation*}
\beta_{q+n}=\cdots=\beta_{q+k_0+n-1}=0,
\end{equation*}
or, in words, that the last $k_0$ entries of the first constituent vanish.
Put differently, under our assumption on $\ell$, Equation~\eqref{eq:second_const} holds over an extended range, namely,
\begin{equation}\label{eq:second_const_plus}
[e_{\ell+j},e_n]=0\quad\text{ for }-k_0<j\le \ell_2-n.
\end{equation}
In the rest of the proof we need to refine this partial information on $g(x)$.
Recall that the leading term of the second constituent is $\beta_{q+k_0+\ell_2}$,
and consider its trailing term $\beta_s$,
hence $k_0\le s<n$ and $s$ is largest such that $\beta_{q+s}\neq 0$.
In terms of $g(x)$ this means that
the highest power of $x$ dividing $g(x)$ is $x^{k_0+n-1-s}$.

Because of our hypothesis $\ell>4p$ and $p > n$, and because we know $\ell_2\ge\ell/2$ from Lemma~\ref{lemma:constituent_bound},
we have $\ell_2+k_0 > 2p > s+n$. Hence we may write $e_{\ell_2+k_0-s} = [e_n, z^{\ell_2+k_0-s-n}]$ and compute
\begin{align*}
[e_{\ell_2+k_0-s},e_{q+s}]
&=-[e_{q+s},e_{\ell_2+k_0-s}] \\
& = -\sum_{j} (-1)^j \binom{\ell_2+k_0-s-n}{j} \beta_{q+s} e_{q+k_0+\ell_2} = -\beta_{q+s} e_{q+k_0+\ell_2},
\end{align*}
where all remaining terms in the summation vanish because of our choice of $s$.

Consequently,
\begin{align*}
\beta_{q+s}[e_{\ell_2+k_0-s},e_{q+s+n}]
&=
[e_{\ell_2+k_0-s},[e_{q+s},e_n]]
\\&=
[e_{\ell_2+k_0-s},e_{q+s},e_n]-[e_{\ell_2+k_0-s},e_n,e_{q+s}]
\\&=
[e_{\ell_2+k_0-s},e_{q+s},e_n]
\\&=
-\beta_{q+s}[e_{q+k_0+\ell_2},e_n],
\end{align*}
where the second term in the second line vanishes because
$\ell_2+k_0-s\le q<q+k_0$,
due to $\ell_2\le q$ as we proved earlier.
Because $\beta_{q+s}\neq 0$ we deduce
\begin{equation}\label{eq:s_even}
[e_{\ell_2+k_0-s},e_{q+s+n}]
=-[e_{q+k_0+\ell_2},e_n] = - [e_{k+\ell_2}, e_n] \neq 0.
\end{equation}
However, we also have
\begin{align*}
[e_{\ell_2+k_0-s},e_{q+s+n}]
&=
\sum_{j} (-1)^j \binom{q+s}{j}[e_{\ell_2+k_0-s+j},e_n,z^{q+s-j}].
\end{align*}
The binomial coefficients in the above summation vanish modulo $p$ except for the terms with $0 \le j \le s$ or $q\le j\le q+s$.
Because of the length of the second constituent all terms in the latter range vanish except when $j=q+s$. In fact
\[
q + n < \ell_2+k_0-s+j < \ell_2 + k_0 + q \quad \text{for $q \le j < q+s$.}
\]
Because $\ell_2\le q$, in the former range we have $\ell_2+k_0-s+j \le q+k_0$, whence the corresponding Lie product vanishes,
except possibly for $j=s$ in case $\ell_2=q$.
Thus,
\[
[e_{\ell_2+k_0-s},e_{q+s+n}]
=
(-1)^s[e_{\ell_2+k_0},e_n,z^{q}]
+(-1)^{q+s}[e_{q+\ell_2+k_0},e_n].
\]
Combined with Equation~\eqref{eq:s_even}, if $s$ is odd this yields
\[
[e_{\ell_2+k_0},e_n,z^{q}]
=
2[e_{q+\ell_2+k_0},e_n]\neq 0,
\]
whence $\ell_2=q$ as observed above.

To deal with the case where $s$ is even we need a slightly different calculation, in degree one higher.
Similar calculations as in the previous case yield
\[
[e_{\ell_2+k_0-s+1},e_{q+s}]
=-[e_{q+s},e_{\ell_2+k_0-s+1}]
=-\beta_{q+s} e_{q+\ell_2+k_0+1},
\]
and, since $\ell_2 + k_0 - s + 1 \leq q + 1 < q + k_0$, also
\[
\beta_{q+s}[e_{\ell_2+k_0-s+1},e_{q+s+n}]
=
[e_{\ell_2+k_0-s+1},e_{q+s},e_n]
=
-\beta_{q+s}[e_{q+\ell_2+k_0+1},e_n],
\]
whence
\begin{equation}\label{eq:s_odd}
[e_{\ell_2+k_0-s+1},e_{q+s+n}]
=-[e_{q+\ell_2+k_0+1},e_n].
\end{equation}
We also have
\begin{align*}
[e_{\ell_2+k_0-s+1},e_{q+s+n}]
&=
\sum_{j} (-1)^j \binom{q+s}{j}[e_{\ell_2+k_0-s+1+j},e_n,z^{q+s-j}]
\end{align*}
As in the previous case the binomial coefficients in the above summation vanish modulo $p$ except for the terms with $0\le j\le s$ or $q\le j\le q+s$.
However, here Lie products in the latter range vanish except when $j=q+s-1$ and possibly $q+s$.
Also, Lie products in the former range vanish as long as $\ell_2+k_0-s+1+j<q+k_0$.
This holds with the possible exceptions of $j=s-1$ or $s$, which may only arise when $\ell_2=q$ (in both cases) or $q-1$ (in the latter case).
Thus,
\begin{align*}
[e_{\ell_2+k_0-s+1},e_{q+s+n}]
&=
-(-1)^{s}s[e_{\ell_2+k_0},e_n,z^{q+1}]
+(-1)^s[e_{\ell_2+k_0+1},e_n,z^{q}]
\\&\quad
+(-1)^{s}s[e_{q+\ell_2+k_0},e_n,z]
-(-1)^{s}[e_{q+\ell_2+k_0+1},e_n]
\\&=
-s[e_{\ell_2+k_0},e_n,z^{q+1}]
+[e_{\ell_2+k_0+1},e_n,z^{q}]
\\&\quad
+s[e_{q+\ell_2+k_0},e_n,z]
-[e_{q+\ell_2+k_0+1},e_n]
\end{align*}

Combined with Equation~\eqref{eq:s_odd},  this yields
\[
0\neq s[e_{q+\ell_2+k_0},e_n,z]
=
s[e_{\ell_2+k_0},e_n,z^{q+1}]
-[e_{\ell_2+k_0+1},e_n,z^{q}],
\]
whence at least one of the terms at the right-hand side is nonzero,
implying $\ell_2=q$ or $\ell_2=q-1$, as claimed.

To complete a proof of Theorem~\ref{thm:first_length}
it remains to prove the following statement.

\begin{thm}\label{thm:final_contradiction}
There is no algebra of type $n$ such that $k = q+k_0$,
with $2 \leq k_0 \leq n-2$, and $\ell_2 = q$ or $\ell_2 = q-1$.
\end{thm}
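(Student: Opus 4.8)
The plan is to first convert the two surviving identities into scalar constraints, then to run the recurrence device of Subsection~\ref{subsec:main} one constituent further, and finally to force a numerical contradiction in $F$. Since $[e_a,e_n,z^m]=\beta_a e_{a+n+m}$, the identities \eqref{eq:s_odd} and \eqref{eq:s_even} are purely scalar: for $s$ odd they give $\beta_{\ell_2+k_0}=2\beta_{q+\ell_2+k_0}$, and for $s$ even $s\beta_{q+\ell_2+k_0}=s\beta_{\ell_2+k_0}-\beta_{\ell_2+k_0+1}\ne0$. The indices $\ell_2+k_0$ and $\ell_2+k_0+1$ lie at the very start of the first constituent, where the entry is either the normalized $\beta_k=1$ or a forced zero just before it; so these equations pin the leading term $\beta_{q+k_0+\ell_2}$ of the second constituent in terms of $s$ and at most one tail entry of the first constituent. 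This pinned value is the target against which everything else must be made consistent.

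Next I would repeat the argument of Subsection~\ref{subsec:main} with $e_{\ell+1}$ replaced by $w=e_{\ell+\ell_2+1}$, the element launching the third constituent. Expanding $[e_j,w]=0$ for $n\le j<\ell_3$ (valid because $\beta_{\ell+\ell_2+1+i}=0$ for $0\le i<\ell_3-n$) by the generalized Jacobi identity, and invoking $\ell_3\ge\ell/2$ from Lemma~\ref{lemma:constituent_bound} to secure a full range of indices, produces a vanishing condition on the upper coefficients of $(x-1)^{q+k_0+\ell_2}\bigl(x^{\ell_2}g(x)+g_2(x)\bigr)$, where $g_2$ is the degree-$(n-1)$ polynomial encoding the last $n$ entries of the second constituent. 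The essential new feature, absent in the first-constituent case and responsible for the difficulty, is that this recurrence couples the two constituents: both the first-constituent tail $g$ (already constrained by Theorem~\ref{thm:polynomials}, in particular $x^{k_0}\mid g$) and the unknown $g_2$ occur. The plan is to use the known factor of $g$ to decouple the condition far enough to recover further entries of $g_2$, and in particular to compute the leading term of the second constituent a second, independent way.

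For $\ell_2=q$ the coupled polynomial is $(x^q-1)^2(x-1)^{k_0}\bigl(x^{q}g(x)+g_2(x)\bigr)$, in which both the factor $(x-1)^{2q}$ and the shift $x^{q}$ are $q$-th powers; the vanishing of $\binom{q}{j}$ modulo $p$ for $0<j<q$ then aligns the residue classes and collapses the condition to a short relation, contradicting the pinned leading term for either parity of $s$. The genuine obstacle is the configuration $\ell_2=q-1$ with $s$ even. Here the shift $x^{\ell_2}=x^{q-1}$ is not a power of $p$, so it misaligns the Frobenius periodicity of $(x^q-1)^2$ and the coupled condition no longer collapses. For this case I would abandon the polynomial route and instead carry out the boundary computation \eqref{eq:s_odd} one degree higher, as that equation already anticipates, retaining every correction term that the shift by one introduces into the Jacobi expansion and controlling which of them survive, using $p>n>s$ to keep the relevant small integers invertible modulo $p$. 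The goal is to isolate a single scalar identity relating the second-constituent leading term to $\beta_{\ell_2+k_0+1}$ that contradicts the value fixed at the outset. The entire difficulty is this combinatorial control of high-degree expansions, which is why ruling out this last configuration requires the whole of Section~\ref{sec:final_contradiction}.
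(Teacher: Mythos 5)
Your opening observation is sound: \eqref{eq:s_even} and \eqref{eq:s_odd} do reduce to scalar identities that express the leading term $\beta_m=\beta_{q+k_0+\ell_2}$ of the second constituent in terms of one or two entries at the start of the first constituent. But the engine you propose for the contradiction does not deliver. Running the Subsection~\ref{subsec:main} recurrence with $w=e_{\ell+\ell_2+1}$ gives, as you say, the vanishing of $[x^{j'}](x-1)^{k+\ell_2}\bigl(x^{\ell_2}g(x)+g_2(x)\bigr)$ for $\ell+\ell_2-\ell_3<j'\le\ell+\ell_2-n$. Take $\ell_2=q$ and write this as $(x^{2q}-2x^q+1)\,\bigl(x^qA(x)+B(x)\bigr)$ with $A=(x-1)^{k_0}g$ and $B=(x-1)^{k_0}g_2$, both of degree $k_0+n-1<2n<q/2$. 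The four shifted copies $x^{3q}A$, $x^{2q}(B-2A)$, $x^q(A-2B)$, $B$ have disjoint supports separated by long gaps, and the admissible range $j'\le 2q+k_0-1$ with $j'>\ell+\ell_2-\ell_3\ge 3q/2$ (since you only control $\ell_3\ge\ell/2$) meets the support of exactly one copy, namely $x^{2q}(B-2A)$ in degrees $2q,\dots,2q+k_0-1$. The condition therefore does not ``collapse to a short relation contradicting the pinned leading term'': it collapses to $x^{k_0}\mid(g_2-2g)$, hence (using $x^{k_0}\mid g$ from Theorem~\ref{thm:polynomials}(e)) to $x^{k_0}\mid g_2$. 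That only says the last $k_0$ entries of the second constituent vanish; since $\deg g_2=n-1>k_0$, it places no constraint whatsoever on the leading coefficient $\beta_m$ of $g_2$, so there is nothing for the pinned value to contradict. The same degeneration occurs for either parity of $s$, and for $\ell_2=q-1$ you concede you have no polynomial argument at all.

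The missing idea is that linear recurrences of this type (vanishing brackets $[e_j,w]$ expanded by the generalized Jacobi identity) are exhausted after the first application; the extra leverage has to come from \emph{quadratic} relations in the $\beta_i$. The paper's proof obtains these from $[e_i,[e_{n+h},e_n]]=0$ (the relations $E(i,h)$ of Equation~\eqref{eq:eih}), choosing $i$ near the trailing term $\beta_M=\beta_{q+s}$ of the first constituent and $h$ calibrated so that the expansion straddles the long run of zeros between $\beta_M$ and $\beta_m$. Dividing out the nonzero factors $\beta_M$, $\beta_m$, $\beta_{m+1}$ turns these quadratic relations into the linear system of Lemma~\ref{lemma:linear_system}, which determines the entire tail $\beta_{q+1},\dots,\beta_{q+s}$ as explicit binomial coefficients $\binom{(q-t)/2}{\cdot}\beta_M$; the contradiction then comes from the hypothesis $\beta_{q+k_0-1}=0$ against the non-vanishing of $\binom{(q-t)/2}{s-k_0+1}$ modulo $p$ (Lemma~\ref{lemma_t} guarantees $(q-t)/2<p$). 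You also misplace the residual hard case: it is not $\ell_2=q-1$ with $s$ even (that configuration falls to the same binomial argument), but $k_0=2$ with $s$ odd and $\ell_2=q$, and within that the subcase $s=n-1$, which consumes the bulk of Section~\ref{sec:final_contradiction}.
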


Because the proof of Theorem~\ref{thm:final_contradiction}
is quite intricate, we postpone it to Section~\ref{sec:final_contradiction}.

\section{Construction of the exceptional algebras}\label{sec:exceptional}

In this section we construct the algebras of the exceptional family $\mathcal{E}$ in Theorem~\ref{thm:classification},
whose first constituent length $\ell$ satisfies
$q<\ell\le q+n$.
This extends a construction
in~\cite[Section 10]{IMS} for algebras of type $p$.

There will be one algebra for each value of a parameter $m$ in the range $0<m<n$.
Our conclusions are summarized in Theorem~\ref{thm:exceptional}.
This section constitutes a proof of that result,
but supplements it with the additional information that all constituents past the first of those algebras are ordinary and end with the same constant.
In this section we work under a less restrictive hypothesis on $n$
than the rest of the paper.

Before we start, we recall a known symmetry property of binomial coefficients modulo a prime.
A proof can be found in~\cite[Section~4]{Mat:binomial}
within a broader discussion,
and another one in~\cite[Lemma~4]{Mat:chain_lengths}.

\begin{lemma}\label{lucas_cor}
Let $p$ be any prime and let $q$ be a power of $p$.
Then for all non-negative integers $a$ and $b$ smaller than $q$
we have
\begin{equation*}
\binom{a}{q-1-b} \equiv  (-1)^{a+b} \binom{b}{q-1-a}\pmod{p}.
\end{equation*}
\end{lemma}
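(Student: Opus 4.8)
The plan is to reduce the congruence to the single-digit case $q=p$ by means of Lucas' theorem, and to settle that case by a direct manipulation of falling factorials modulo $p$.

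First I would fix $q=p^r$ and write the base-$p$ expansions $a=\sum_{i=0}^{r-1}a_ip^i$ and $b=\sum_{i=0}^{r-1}b_ip^i$, with each digit in $\{0,\ldots,p-1\}$. Since $q-1=\sum_{i=0}^{r-1}(p-1)p^i$ and $0\le b_i\le p-1$, the subtraction produces $q-1-b=\sum_{i=0}^{r-1}(p-1-b_i)p^i$ with no borrowing, so the right-hand side is exactly the base-$p$ expansion of $q-1-b$; likewise for $q-1-a$. Lucas' theorem then yields
\[
\binom{a}{q-1-b}\equiv\prod_{i=0}^{r-1}\binom{a_i}{p-1-b_i},\qquad \binom{b}{q-1-a}\equiv\prod_{i=0}^{r-1}\binom{b_i}{p-1-a_i}\pmod p.
\]
Thus everything reduces to the digitwise claim $\binom{a_i}{p-1-b_i}\equiv(-1)^{a_i+b_i}\binom{b_i}{p-1-a_i}\pmod p$, which is precisely the asserted identity in the special case $q=p$.

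For that base case, with $0\le a,b\le p-1$, I would first dispose of $a+b<p-1$, where $p-1-b>a$ and $p-1-a>b$ force both binomial coefficients to vanish. When $a+b\ge p-1$ I set $t=a+b-(p-1)\ge 0$, so that $0\le t\le a$ and $0\le t\le b$, and rewrite $\binom{a}{p-1-b}=\binom{a}{t}$, $\binom{b}{p-1-a}=\binom{b}{t}$ using $\binom{m}{j}=\binom{m}{m-j}$. Since $t\le p-1<p$, the factor $t!$ is invertible modulo $p$, so I would compare the falling factorials: from $a\equiv t-1-b\pmod p$ one gets, after reindexing, $\prod_{k=0}^{t-1}(a-k)\equiv(-1)^t\prod_{k=0}^{t-1}(b-k)\pmod p$, whence $\binom{a}{t}\equiv(-1)^t\binom{b}{t}\pmod p$.

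It remains to reconcile the exponents of $-1$, which is the only delicate bookkeeping. In the base case $(-1)^t=(-1)^{a+b-(p-1)}=(-1)^{a+b}$ for odd $p$, while for $p=2$ every sign is $\equiv 1\pmod 2$ and the four digit pairs are checked directly. Reassembling over the digits produces $\prod_i(-1)^{a_i+b_i}=(-1)^{\sum_i(a_i+b_i)}$, and the final point is that $\sum_i(a_i+b_i)\equiv a+b\pmod 2$: for odd $p$ each $p^i$ is odd, so $a\equiv\sum_i a_i$ and $b\equiv\sum_i b_i$ modulo $2$, while for $p=2$ the signs are again trivial modulo the characteristic. This matches the global factor $(-1)^{a+b}$ and completes the argument. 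I expect the main obstacle to be exactly this sign bookkeeping, together with verifying the no-borrow condition that legitimizes Lucas' theorem; the remaining manipulations are routine.
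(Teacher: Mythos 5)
Your proof is correct. Note first that the paper itself does not prove this lemma: it quotes it as known and refers to \cite[Section~4]{Mat:binomial} and \cite[Lemma~4]{Mat:chain_lengths} for proofs, so there is no in-paper argument to match against. Your route is a natural self-contained one, and it meshes well with the Lucas machinery the paper already sets up in Section~\ref{sec:polynomials}: the reduction to single digits is legitimate because $q-1=\sum_{i}(p-1)p^i$ has all digits equal to $p-1$, so $q-1-b$ and $q-1-a$ are computed digitwise with no borrowing, exactly as you say. The base case is also handled correctly: when $a+b<p-1$ both sides vanish; when $a+b\ge p-1$, setting $t=a+b-(p-1)$ gives $0\le t\le\min(a,b)\le p-1$, the symmetry $\binom{a}{p-1-b}=\binom{a}{t}$ and $\binom{b}{p-1-a}=\binom{b}{t}$ is valid since $p-1-b=a-t$ and $p-1-a=b-t$, and the congruence $a\equiv t-1-b\pmod p$ indeed yields $\prod_{k=0}^{t-1}(a-k)\equiv(-1)^t\prod_{k=0}^{t-1}(b-k)$, with $t!$ invertible because $t<p$. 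Your sign bookkeeping is the one genuinely delicate point and you resolve it properly: $(-1)^t=(-1)^{a+b}$ for odd $p$ since $p-1$ is even, the case $p=2$ is trivial as all signs are $1$ in characteristic $2$ (and the four digit pairs check out), and the reassembly uses $\sum_i(a_i+b_i)\equiv a+b\pmod 2$, which holds for odd $p$ because every $p^i$ is odd. An alternative, slightly slicker proof (closer in spirit to the cited sources) runs through the upper-negation identity $\binom{a}{c}\equiv(-1)^c\binom{c-a-1}{c}\pmod p$ together with $q\equiv 0\pmod p$, avoiding the explicit falling-factorial manipulation, but your argument is complete as it stands and buys full self-containment at the cost of only modest digit bookkeeping.
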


\subsection{A construction}
Let $K=F(t)$ be the field of rational functions on an indeterminate $t$ over $F$, and consider the ring of divided powers $K[x;c]$, where $q=p^c$.
Thus, a basis of $K[x;c]$ consists of the monomials $x^{(i)}$, with $0\le i<q$ (writing $1$ for $x^{(0)}$), with multiplication given by
$x^{(i)}x^{(j)}=\binom{i+j}{i}x^{(i+j)}$
and extended linearly.
Note that the binomial coefficient vanishes modulo $p$ when $i,j<q$ but $i+j\ge q$, and so the right-hand side may be read as zero in that case.
Also, it will be both safe and convenient for our calculations to allow divided power $x^{(j)}$ with negative $j$ and interpret them as zero.

Let $\partial$ be the standard derivation of $K[x;c]$, hence $\partial x^{(i)}=x^{(i-1)}$ for $0<i<q$ and $\partial 1=0$,
and let $I$ denote the identity map of $K[x;e]$.
View $K[x;c]$ as an abelian Lie algebra, and $\partial$ as an element of the general Lie algebra $\mathrm{gl}(K[x;c])$.
We first look at the Lie subalgebra of $\mathrm{gl}(K[x;c])$ generated by
$Z=-\partial-tx^{(q-1)}I$
and $tx^{(q-n)}I$.
Thus,
$Zx^{(i)}=-x^{(i-1)}$ for $0<i<q$, but $Z1=-tx^{(q-1)}$.
The reason for the negative signs here is to avoid some awkward alternating signs later, due to our use of left-normed long Lie products
conflicting with the prevalent notation where derivations act on the left of their arguments.

For $0<n\le q$ we have
$[tx^{(q-n)}I,Z]
=-tx^{(q-n)}I\partial+\partial(tx^{(q-n)}I)
=tx^{(q-n-1)}I$,
and then, by an easy induction,
\[
[tx^{(q-n)}I,Z^j]
=tx^{(q-n-j)}I
\]
for $j>0$.
Here $Z^j$ refers to our long Lie product convention, not to a compositional power of $Z$.
In particular, for $j=q-n$ we get $tI$, and hence
$tx^{(q-n)}I$ and $Z$ generate a Lie algebra of dimension $q-n+2$, metabelian and of maximal class.
Also, if we give the vector space (or abelian Lie algebra) $K[x;c]$ a cyclic grading over the integers modulo $q$ by assigning degree $j$ (modulo $q$) to $x^{(q-j)}$ ,
then $Z$ and $tx^{(q-n)}I$ are graded derivations of $K[x;c]$, of degrees $1$ and $n$, respectively.

We will now use this to construct a certain algebra
of type $n$, which depends on a certain parameter $m$ with $0<m<n\le q$.
Our construction will be more transparent if we initially look at the special case where $n=m+1$.
Thus, fix $0<m<q$,
and consider the semidirect sum of $K[x;c]$ and $\mathrm{gl}(K[x;c])$, hence with product
$[(f,A),(f',A')]=(Af'-A'f,[A,A'])$.
Inside that, consider the $F$-Lie subalgebra $\tilde L$ generated by
\[
z=(0,Z)
\qquad\text{and}\qquad
e_{m+1}=(x^{(q-1)},tx^{(q-m-1)}I).
\]
Then setting $e_j:=[e_{m+1},z^{j-m-1}]$ for $j>m+1$, we
inductively find
\[
e_j=(x^{(q+m-j)},tx^{(q-j)}I)
\]
for $m< j\le q+m$,
where the second entry is to be read as zero for $q<j\le q+m$ according to our stipulation on divided powers with negative exponents.
In particular, we have
$e_{q+m}=(1,0)$,
whence
$e_{q+m+1}=(tx^{(q-1)},0)$.
More generally, we find
\[
e_{rq+m+j}=(t^rx^{(q-j)},0)
\]
for $r>0$ and $0<j\le q$.

When we view the algebra $K[x;c]$ of divided powers
as an abelian Lie algebra over $F$ we can refine a shift
of its $\Z/q\Z$-grading over $K$ described earlier
to a $\Z$-grading over $F$,
by assigning degree $rq+m+j$ to $(t^rx^{(q-j)},0)$.
Then it can be easily checked that
$z$ and $e_{m+1}$ act on $K[x;c]$ as graded derivations
of degree $1$ and $m+1$ in this $\Z$-grading.
In particular, before doing any explicit calculation we see that
$[e_{m+1},e_j]$ is an $F$-scalar multiple of $e_{m+1+j}$ for all $j\ge q$.
Together with previous information on the $\Z/q\Z$-grading we see that this extends to any $j>m$.
Consequently, $\tilde L$ is a $\Z$-graded Lie algebra of maximal class, generated by elements of degree $1$ and $m+1$,
hence an algebra of type $m+1$ according to our terminology.

For any $n$ with $m<n\le q$, the subalgebra $L$ of $\tilde L$ generated by
$z=(0,Z)$
and
$e_n=(x^{(q+m-n)},tx^{(q-n)}I)$
is an algebra of type $n$.
We will now focus on that subalgebra $L$,
and determine its constituents
by finding the action of $e_n$ on it.
For $n<j\le q-n+m$ we have
\begin{align*}
[e_j,e_n]
&=
[(x^{(q+m-j)},tx^{(q-j)}I),(x^{(q+m-n)},tx^{(q-n)}I)]
\\&=
(tx^{(q-j)}x^{(q+m-n)}-tx^{(q-n)}x^{(q+m-j)},0)
=0.
\end{align*}
Next, for $0\le j<n$ we have
\begin{align*}
[e_{q+m-j},e_n]
&=
[(x^{(j)},tx^{(j-m)}I),(x^{(q+m-n)},tx^{(q-n)}I)]
\\&=
\left(\binom{q-n+j}{j-m}-\binom{q-n+j}{j}\right)
(tx^{(q-n+j)},0).
\end{align*}
We can write the coefficient of $(tx^{(q-n+j)},0)$, that is,
of $e_{q+m+n-j}$,
in a more convenient form by noting
\begin{align*}
\binom{q-n+j}{j-m}
=
\binom{q-n+j}{q-n+m}
&\equiv
(-1)^{j-m}\binom{n-1-m}{n-1-j}
\pmod{p}
\\&=
(-1)^{j-m}\binom{n-1-m}{j-m},
\end{align*}
where we have applied Lemma~\ref{lucas_cor} for the second step,
and similarly for the other binomial coefficient involved.
In conclusion, for $0\le j<n$ we have
\begin{equation}\label{eq:exceptional_first_const}
[e_{q+m-j},e_n]
=
(-1)^{j}\left((-1)^m\binom{n-1-m}{j-m}-\binom{n-1}{j}\right)
e_{q+m+n-j}.
\end{equation}

The above equation provides information on the first constituent
of the algebra of type $n$ under consideration, and we now determine
its length.
For $j=n-1$ the above equation reads
\[
[e_{q+m-n+1},e_n]=
(-1)^{n-1}\bigl((-1)^m-1\bigr)e_{q+m+1},
\]
and hence does not vanish if $m$ is odd.
However, if $m$ is even we have
\[
[e_{q+m-n+2},e_n]=
(-1)^{n+1}me_{q+m+2}.
\]
At this point we introduce the further assumption $m<p$
(although $m<2p$ would suffice for $p$ odd).
Under that assumption this Lie product does not vanish.
In conclusion, the highest $j$ in the range
$0\le j<n$ for which $[e_{q+m-j},e_n]$
does not vanish is $j=n-1$ when $m$ is odd,
and $j=n-2$ when $m$ is even.
Consequently, the first constituent of $L$ has length $q+m$ when $m$ is odd, and $q+m+1$ when $m$ is even.

\subsection{Further details on the structure of $L$}
Now we investigate $L$ further by looking at constituents past the first,
still under the assumptions $0<m<p$ and $m<n\le q$.
For $r>0$ we have
\begin{align*}
[e_{rq+m+j},e_n]
&=
[(t^rx^{(q-j)},0),(x^{(q+m-n)},tx^{(q-n)}I)]
\\&=
-\binom{2q-n-j}{q-n}
(t^{r+1}x^{(2q-n-j)},0),
\end{align*}
whence
\[
[e_{rq+m+j},e_n]=0
\qquad\text{for $0<j\le q-n$}
\]
and, after replacing $j$ with $q-j$,
\begin{equation}\label{eq:exceptional_later_const}
[e_{rq+q+m-j},e_n]=(-1)^{j+1}\binom{n-1}{j}e_{rq+q+m+n-j}
\qquad\text{for $0\le j<n$,}
\end{equation}
where we have used
\[
\binom{q-n+j}{q-n}
\equiv
(-1)^j\binom{n-1}{n-1-j}\pmod{p},
\]
due to to Lemma~\ref{lucas_cor}.
Note that the sequence $(\beta_i)_{i>n}$ of the algebra $L$
is periodic starting from $\beta_{q+1}$, with period length $q$.
We will return to this point after Proposition~\ref{prop:gen_func}
below.

We have just discovered that when $m$ is odd all constituents past the first
are ordinary of length $q$, ending in $-1$
(with our non-normalized choice of generators).
However, when $m$ is even all constituents past the first are ordinary,
ending in $-1$, but only those from the third on have length $q$,
with the second constituent having length $q-1$.
In reality, this discrepancy according to the parity of $m$
is less substantial than it may appear, and is due to the way we have
defined constituents: when $m$ is even the second constituent
has the same entries as an ordinary constituent, except for the
omission of an initial zero, which becomes
the last entry of the first constituent.

To further clarify the structure of the algebras
constructed above it will be helpful to introduce
generating functions.
More generally, the sequence $(\beta_i)_{i>n}$ of any algebra $L$ of type $n$ can be conveniently encoded
by its {\em generating function}
$\sum_{i>n}\beta_iX^i$,
a formal power series in $F[[X]]$,
where $X$ is an indeterminate.
Because
\[
[e_i,e_{n+1}]=[e_i,[e_n,z]]
=[e_i,[e_n,z]]-[e_i,z,e_n]
=(\beta_i-\beta_{i+1})e_{i+n+1},
\]
the coefficients of the series
$(1-1/X)\sum_{i>n}\beta_iX^i$
encode the adjoint action of $e_{n+1}$
on the basis elements $e_i$ for $i\ge n$.
Iterating this is a convenient way to find
the adjoint action of any basis element $e_j$, with $j\ge n$,
again on all basis elements $e_i$ for $i\ge n$:
that will be represented by the coefficients of
$(1-1/X)^{j-n}\sum_{i>n}\beta_iX^i$.
This device can be used to obtain the sequence
$(\tilde\beta_i)_{i>n+1}$ of the subalgebra $\tilde L$ of type $n+1$ of our algebra $L$ of type $n$,
from the sequence $(\beta_i)_{i>n}$:
\[
\sum_{i>n+1}\tilde\beta_iX^i=
(1-1/X)\biggl(\sum_{i>n}\beta_iX^i\biggr)+\beta_{n+1}X^n.
\]
Adding the term $\beta_{n+1}X^n$ is required to disregard
the action of $e_{n+1}$ on $e_n$.
Note that $\beta_{n+1}$ vanishes in many cases of interest,
and precisely as long as the
first constituent length $\ell$ of $L$ satisfies $\ell>2n$.

Now we produce the generating functions associated to the sequences
of the exceptional algebras $L$ constructed in this section,
as a more compact way of recording their multiplication.

\begin{prop}\label{prop:gen_func}
Assume $0<m<p$ and $m<n\le(q+m)/2$.
The generating function $\sum_{i>n}\beta_iX^i$
of the exceptional algebra $L$ of type $n$ constructed in this section, according to the parameter $m$, is
\[
\sum_{i>n}\beta_iX^i
=X^{q+m+1-n}(X-1)^{n-m-1}
-\frac{X^{q+m+1-n}(X-1)^{n-1}}{1-X^q}.
\]
\end{prop}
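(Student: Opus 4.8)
The plan is to assemble the generating function directly from the constituent data already computed in this section, namely Equations~\eqref{eq:exceptional_first_const} and~\eqref{eq:exceptional_later_const}, rather than to re-derive anything about the algebra $L$. First I would record the two pieces. The later constituents, governed by \eqref{eq:exceptional_later_const}, contribute $\beta_{rq+q+m-j}=(-1)^{j+1}\binom{n-1}{j}$ for $r\ge 0$ and $0\le j<n$, and these are periodic with period $q$ starting from index $q+1$. The first constituent, governed by \eqref{eq:exceptional_first_const}, contributes $\beta_{q+m-j}=(-1)^j\bigl((-1)^m\binom{n-1-m}{j-m}-\binom{n-1}{j}\bigr)$ for $0\le j<n$. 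The key observation is that the $\binom{n-1}{j}$ part of the first-constituent coefficient is exactly what the periodic pattern would predict if extended one period back, so the first constituent differs from a ``virtual'' earlier periodic constituent precisely by the extra term $(-1)^{j+m}\binom{n-1-m}{j-m}$.

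Second, I would translate each piece into a power series in $X$. For a single ordinary constituent ending at a given index with trailing coefficients $(-1)^{j+1}\binom{n-1}{j}$, the associated polynomial in $X$ is, up to sign and an overall power of $X$, a copy of $(X-1)^{n-1}$; summing the periodic family over $r\ge 0$ introduces the geometric factor $1/(1-X^q)$. Concretely I expect the periodic part to assemble as
\[
-\frac{X^{q+m+1-n}(X-1)^{n-1}}{1-X^q},
\]
where the numerator encodes one block of trailing coefficients and the shift $X^{q+m+1-n}$ places the leading term of the second constituent (at index $q+m+1$) correctly. The correction term from the first constituent, built from the coefficients $(-1)^{j+m}\binom{n-1-m}{j-m}$, assembles into $X^{q+m+1-n}(X-1)^{n-m-1}$, the shift by $m$ inside the binomial producing the reduced exponent $n-m-1$ and the same overall placement factor.

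Third, I would verify that adding these two pieces reproduces every $\beta_i$. The cleanest way is to expand the claimed right-hand side as a power series: write $1/(1-X^q)=\sum_{r\ge 0}X^{rq}$, extract $[X^i]$ of each summand using the binomial expansions of $(X-1)^{n-1}$ and $(X-1)^{n-m-1}$, and match against the computed $\beta_i$ in the two index regimes (the single first-constituent block $q+m-n+1\le i\le q+m$, and the periodic blocks $rq+q+m-n+1\le i\le rq+q+m$ for $r\ge 0$). In the first block both summands contribute and one checks the coefficients combine to the expression in \eqref{eq:exceptional_first_const}; in the later blocks only the geometric summand contributes, giving \eqref{eq:exceptional_later_const}. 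One must also confirm that all coefficients $\beta_i$ with $i$ not in these ranges vanish, which follows because the binomial coefficients in $(X-1)^{n-1}$ and $(X-1)^{n-m-1}$ have support of width exactly $n$ and $n-m$ respectively, matching the computed runs of zeros between constituents.

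The main obstacle I anticipate is bookkeeping of the index shifts and signs, especially reconciling the parity-dependent description of the first constituent length (length $q+m$ for odd $m$, $q+m+1$ for even $m$) with a single uniform generating function. The resolution, already flagged in the text preceding the statement, is that the generating function does not see the somewhat arbitrary boundary between first and later constituents: the potentially-vanishing trailing entry $\beta_{q+m}$ (the $j=0$ term, which is $(-1)^m-1$ and so vanishes exactly when $m$ is even) is produced automatically by the correction term $X^{q+m+1-n}(X-1)^{n-m-1}$ combined with the periodic part, so no case distinction on the parity of $m$ is needed in the verification. The hypothesis $n\le(q+m)/2$ guarantees $q+m+1-n\ge n>0$, so the shift exponent is positive and the blocks do not overlap, which is what makes the term-by-term matching unambiguous.
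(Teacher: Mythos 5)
Your proposal is correct in substance but takes a genuinely different route from the paper. You verify the formula directly for general $n$ by splitting the sequence into a fully periodic part (Equation~\eqref{eq:exceptional_later_const} extended back to $r=0$, whose geometric sum over $r$ produces the $-X^{q+m+1-n}(X-1)^{n-1}/(1-X^q)$ term) plus a one-block correction $(-1)^{j+m}\binom{n-1-m}{j-m}$ coming from Equation~\eqref{eq:exceptional_first_const}, which packages into $X^{q+m+1-n}(X-1)^{n-m-1}$; this is exactly the ``read it straight off the two displayed equations'' route that the paper explicitly mentions and then declines to follow. The paper instead establishes the formula only in the easy case $n=m+1$, where the binomial $\binom{n-1-m}{j-m}$ collapses and the generating function is visibly $X^q-X^q(X-1)^m/(1-X^q)$, and then obtains the general case by multiplying by $(1-1/X)^{n-m-1}$, using the device (set up earlier in the section) that this passes to the sequence of the subalgebra of the next type; the hypothesis $n\le(q+m)/2$ enters there to guarantee $\ell>2n$ at each step so that no correction term $\beta_{n+1}X^n$ is needed. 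Your approach buys a self-contained coefficient-by-coefficient check with no reliance on the subalgebra device; the paper's buys a shorter computation, since only the degenerate case of \eqref{eq:exceptional_first_const} is ever expanded.

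One detail in your final paragraph is garbled, though it does not affect the verification itself: the entry whose value is $(-1)^{n-1}\bigl((-1)^m-1\bigr)$, and which vanishes precisely when $m$ is even, is $\beta_{q+m-n+1}$ (the $j=n-1$ term, i.e.\ the \emph{first} entry of the computed block, whose vanishing or not is what makes $\ell$ equal $q+m$ or $q+m+1$). The $j=0$ term $\beta_{q+m}$ equals $-1$ for every $m$ in the allowed range, since $\binom{n-1-m}{-m}=0$. Your underlying point stands --- the generating function is blind to where one draws the boundary between the first and second constituents, so no parity case split is needed --- but for the reason just stated, not because the trailing entry vanishes.
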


\begin{proof}
It is not difficult to read the generating function of $L$
straight off Equations~\eqref{eq:exceptional_first_const}
and~\eqref{eq:exceptional_later_const},
which describe the action of $e_n$.
Instead, we deal with the easier special case
where $n=m+1$, and then deduce the generating function
in the general case by considering subalgebras of higher types.

Thus, when $n=m+1$ Equation~\eqref{eq:exceptional_first_const}
yields
\[
\beta_{q+j}
=
(-1)^{{m-j}}\left((-1)^m\binom{0}{j}-\binom{m}{j}\right)
\qquad\text{for $0\le j\le m$,}
\]
and for $r>0$ Equation~\eqref{eq:exceptional_later_const}
yields
\[
\beta_{rq+q+j}=-(-1)^{m-j}\binom{m}{j}
\qquad\text{for $0\le j\le m$.}
\]
Together they show that the generating function of $L$ in this special case is
\[
\sum_{i>m+1}\beta_iX^i
=X^q
-\frac{X^{q}(X-1)^{m}}{1-X^q}.
\]

Multiplying this by $(1-1/X)^{n-m-1}$ we find
the generating function which encodes the action of $e_n$
in this algebra $L$, for any $n>m$:
\[
X^{q+m+1-n}(X-1)^{n-m-1}
-\frac{X^{q+m+1-n}(X-1)^{n-1}}{1-X^q}.
\]
According to a previous discussion,
this is the the generating function $(\tilde\beta_i)_{i>n}$
of the subalgebra $\tilde L$ of type $n$ of $L$,
as long as $n\le (q+m)/2$.
\end{proof}

As an application, we use the generating function
of Proposition~\ref{prop:gen_func} to find
the (unique) maximal abelian ideal
of the soluble algebras $L$ under consideration.
This could of course be deduced directly from our construction of $L$
in terms of the algebra of divided powers and its derivations,
but here we wish to emphasize how it can be quickly identified
from the generating function itself.
Since that maximal abelian ideal does not depend on $n$
(in the range $m<n\le(q+m)/2$ considered),
we may take $n=m+1$.
Thus, let $L$ be the exceptional algebra of type $m+1$
constructed in this section (which contains those of larger types
as subalgebras).
Then the adjoint action of a basis element $e_j$ on $L$, for any $j>m$,
is represented by the generating function
of Proposition~\ref{prop:gen_func} with $h$ in place of $n$.
In particular, the adjoint action of $e_{q+1}=(x^{(m-1)},0)$
on $L$ is represented by
$
X^{m}(X-1)^{q-m}
+X^{m}
$,
a polynomial of degree $q$.
This means that $[e_i,e_{q+1}]=0$ for $i>q$, and tells us that
the ideal of $L$ generated by $e_{q+1}$ is abelian, because
$[e_i,e_j]=0$ for all $i,j>q$
follows inductively via
$[e_i,e_{j+1}]=[e_i,e_j,z]-[e_i,z,e_j]$.
By contrast, the adjoint action of $e_{q}=(x^{(m)},tI)$
is represented by the generating function
$
X^{m+1}(X-1)^{q-m-1}
-X^{m+1}/(1-X)
$,
which is not a polynomial.
In fact, from that we can read off at once, in particular, that
$[e_i,e_q]=-e_{i+q}$ for $i>q$.

\section{Proof of Theorem~\ref{thm:final_contradiction}}\label{sec:final_contradiction}

Recall that, in general, the first constituent of
an algebra $L$ of type $n$
is the sequence
$\beta_n,\ldots,\beta_{\ell}$,
whose leading term  is $\beta_k$, where $k=\ell-n+1$,
and that the second constituent of $L$ is
$\beta_{\ell+1},\ldots,\beta_{\ell+\ell_2}$,
whose leading term is $\beta_{\ell+\ell_2-n+1}$.
Thus, the second constituent starts with precisely
$\ell_2-n$ zeroes.
However, the first constituent may, in general, end with some zeroes.
In this proof we will need to make the most use of this joint sequence of consecutive zeroes.

In order to prove Theorem~\ref{thm:final_contradiction}, assume now
$L$ is an algebra of type $n$ with
$k=q+k_0$, where $2\le k_0\le n-2$,
and $\ell_2 = q$ or $\ell_2 = q-1$.
We intend to obtain a contradiction from this.
Under such assumptions we have shown in
Subsection~\ref{subsec:spurious}
that the first constituent has at least
$k_0$ trailing zeroes.
(This came from the conclusion in Theorem~\ref{thm:polynomials}
that $x^{k_0}$ divides $g(x)$.)
However, the first constituent may have more than $k_0$ trailing zeroes.
As in Subsection~\ref{subsec:spurious}
let $\beta_{q+s}$ be the trailing term of the first constituent,
whence $k_0\le s<n$.

To keep notation more compact we introduce abbreviations
for a frequently occurring couple of indices and a related quantity.
Thus, let $\beta_M$ and $\beta_m$ be the trailing term
of the first constituent and the leading term of the second constituent,
respectively.
Hence $M=q+s$ and $m=q+k_0+\ell_2$, and we have
\begin{equation}\label{eq:betas}
\begin{aligned}
&&\beta_{i} &=  0  \text{ for $n\le i<k$},&
\beta_k &\ne 0,
\\
\beta_{M} &\ne 0,&
\beta_{i}  &=  0 \text{ for $M<i<m$},&
\beta_m  &\ne 0.
\end{aligned}
\end{equation}
So far we have no information on $\beta_i$
for $k<i<M$.
The strategy of our proof is to first fill this gap
by completely determining the first
constituent of $L$, and then to use this for a contradiction
against the second constituent.
A crucial quantity to this end is $m-M-1$, the
number of zeroes between $\beta_M$ and $\beta_m$, but it is more convenient to assign a name $t$ to the closely related quantity
\[
t  := m - M - n + 1
= k_0 + \ell_2 - s - n +1.
\]
Because the number of zeroes between $\beta_M$ and $\beta_m$
is at least $k_0+\ell_2-n$, and at most $\ell_2-1$, we have
\begin{equation}\label{eq:t_bounds}
\ell_2-2n+2+k_0\le t\le\ell_2-n+1.
\end{equation}
In particular, because our assumptions include $\ell_2 = q$ or $\ell_2 = q-1$, Equation~\eqref{eq:t_bounds} implies $q-2p<t<q$.
We will refine this crude information in Lemma~\ref{lemma_t}.

Before doing that we note the following generic application
of the Jacobi identity, which will occur repeatedly
throughout the proof, for $i>n$ and $h>0$:
\begin{align*}
[e_i,[e_{n+h},e_n]]
& = [[e_i, e_{n+h}], e_n] - [[e_i, e_n], e_{n+h}] \\
& = [[e_i, e_{n+h}], e_n] - \beta_i [e_{i+n}, e_{n+h}] \\
& = \sum_{g} (-1)^g \binom{h}{g} \beta_{i+g} [e_{i+h+n}, e_n] - \beta_i \sum_{g} (-1)^g \binom{h}{g} \beta_{i+n+g} e_{i+h+2n} \\
& = \left(\beta_{i+h+n} \sum_{g} (-1)^g \binom{h}{g} \beta_{i+g}  - \beta_i  \sum_{g} (-1)^g \binom{h}{g} \beta_{i+n+g} \right) e_{i+h+2n}.
\end{align*}
Hence whenever $\beta_{n+h}=0$, which means $[e_{n+h},e_n]=0$,
happens to hold for a certain $h$, its consequence
$[e_i,[e_{n+h},e_n]]=0$,
for any $i$,
is equivalent to the following quadratic equation in the
coefficients $\beta_j$, which we name for ease of reference:
\begin{equation}\label{eq:eih}
    E (i,h) : \qquad  \beta_{i+h+n} \sum_{g} (-1)^g \binom{h}{g} \beta_{i+g}  - \beta_i  \sum_{g} (-1)^g \binom{h}{g} \beta_{i+n+g} = 0.
\end{equation}
When applying Equation $E(i,h)$ we will choose $i$ and $h$
so that all coefficients $\beta_j$ involved vanish
aside from a few.

\begin{lemma}\label{lemma_t}
The integer $t$ is odd,
and $q - 2 p < t<  q - 1 $ with $t\neq q-p$.
Consequently, $(q-t)/2$ is a positive integer.
\end{lemma}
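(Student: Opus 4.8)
The plan is to dispatch the two easy assertions, the bounds on $t$ and the final divisibility, by direct arithmetic, and to concentrate the real work on the parity of $t$. The two-sided bound $q-2p<t<q-1$ is immediate from Equation~\eqref{eq:t_bounds}: the upper estimate $t\le\ell_2-n+1$ gives $t\le q-n+1<q-1$ since $n\ge 4$ (as $2\le k_0\le n-2$), while the lower estimate $t\ge\ell_2-2n+2+k_0\ge(q-1)-2n+4=q-2n+3$ gives $t>q-2p$ since $n<p$. Once $t$ is known to be odd, the value $q-p$ is automatically excluded, being even (both $q$ and $p$ are odd), and since $t<q$ we get that $q-t$ is a positive even integer, so $(q-t)/2$ is a positive integer. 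Thus everything reduces to the oddness of $t$.

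For the parity I would first use Lemma~\ref{lemma:ell_even}: as $L$ is non-metabelian, $\ell=q+k_0+n-1$ is even, and since $q$ is odd this forces $k_0\equiv n\pmod 2$. Substituting into $t=k_0+\ell_2-s-n+1$ gives $t\equiv\ell_2+s+1\pmod 2$, so $t$ is odd precisely when $\ell_2\equiv s\pmod 2$. Now Subsection~\ref{subsec:spurious} already shows that $s$ odd forces $\ell_2=q$; hence $\ell_2=q-1$ can occur only with $s$ even, and both of the surviving combinations $(s\text{ odd},\ \ell_2=q)$ and $(s\text{ even},\ \ell_2=q-1)$ satisfy $\ell_2\equiv s$, as required. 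The only parity combination not yet excluded, and which would make $t$ even, is $s$ even together with $\ell_2=q$. Ruling this out is the main obstacle.

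To eliminate that residual case I would exploit the long run of consecutive zeros $\beta_i=0$ for $M<i<m$ recorded in Equation~\eqref{eq:betas}, which under $\ell_2=q$ spans roughly $q$ indices. The idea is to feed indices $i$ and offsets $h$ into the quadratic relations $E(i,h)$ of Equation~\eqref{eq:eih}, chosen so that $n+h$ lands inside this zero block (making $E(i,h)$ available) while the two sums retain only the distinguished nonzero coefficients $\beta_k$, $\beta_M$, $\beta_m$ and the as-yet-undetermined entries of the first constituent; in effect one transfers information from the trailing term $\beta_M$ of the first constituent, across the block, to the leading term $\beta_m$ of the second. Combined with the relation already produced by the $s$-even computation of Subsection~\ref{subsec:spurious}, which in the sub-case $\ell_2=q$ reads $\beta_{k+1}=s(1-\beta_m)$ after the normalization $\beta_k=1$, this should over-determine the trailing portion of the first constituent and yield a contradiction when $s$ is even. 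I expect the delicate point to be exactly this transfer: the offsets $h$ are of size about $q$, so the binomial coefficients $\binom{h}{g}$ must be controlled modulo $p$ via Lucas' theorem, and one must track precisely which few terms in each sum fail to vanish. With the sub-case $(s\text{ even},\ \ell_2=q)$ excluded, $\ell_2\equiv s\pmod 2$ holds unconditionally, whence $t$ is odd and the remaining assertions follow as in the first paragraph.
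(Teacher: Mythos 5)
Your handling of the bounds and of the final ``consequently'' is fine, and your parity bookkeeping is correct: granting $\ell$ even, $t$ is odd exactly when $\ell_2\equiv s\pmod 2$, and the only surviving combination that would make $t$ even is $s$ even together with $\ell_2=q$. But that is precisely where your argument stops being a proof. You acknowledge that ruling out this case is ``the main obstacle'' and then only describe a strategy you \emph{would} pursue --- feeding suitable $(i,h)$ into $E(i,h)$, controlling binomial coefficients by Lucas' theorem, hoping to over-determine the tail of the first constituent. None of this is carried out, no specific choice of $h$ is exhibited, and no contradiction is derived. Since the entire content of the lemma beyond routine arithmetic is the oddness of $t$, the proposal has a genuine gap at its core.

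For comparison, the paper obtains the parity in one stroke, with no case analysis on $s$ or $\ell_2$: it applies $E(M,t-1)$, which is legitimate because $n+(t-1)=m-M\le q\le k-1$ puts $\beta_{m-M}$ in the initial zero block of the sequence. With this choice of $h$ the index $i+h+n$ equals $m$ exactly, so by~\eqref{eq:betas} the first sum in~\eqref{eq:eih} collapses to the single term $\beta_M$ and the second to $(-1)^{t-1}\beta_m$, giving $\bigl(1+(-1)^t\bigr)\beta_M\beta_m=0$ and hence $t$ odd since $p$ is odd. This is the ``transfer across the zero block'' you gesture at, but with the offset tuned so that only $\beta_M$ and $\beta_m$ survive --- no Lucas' theorem, no undetermined entries of the first constituent, and no reliance on Lemma~\ref{lemma:ell_even} or on the dichotomy $\ell_2\in\{q,q-1\}$. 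If you want to salvage your route, you would have to actually produce the contradiction in the case $s$ even, $\ell_2=q$; as written, the claim is not established.
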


\begin{proof}
Noting $[e_{n+t-1}, e_n] = [e_{m-M}, e_n ] = 0$
because $m - M \leq k_0 - s + q \le q$,
we may use
$[e_M,[e_{m-M},e_n]]=0$,
which amounts to equation $E(M,t-1)$ in terms of the sequence of $L$.
Because all entries between $\beta_M$ and $\beta_m$ vanish,
equation $E(M,t-1)$ simply reads
\begin{align*}
0 & = \beta_{m} \beta_M - (-1)^{t-1} \beta_{M} \beta_{m} =
\bigl(1 + (-1)^{t}\bigr)\beta_{M} \beta_{m}.
\end{align*}
Since $\beta_M \beta_{m} \ne 0$ we find $1+(-1)^t\equiv 0\pmod{p}$,
and hence $t$ is odd because $p$ is odd.

From the fact that $s+n < 2 n < 2 p$ we infer
\begin{align*}
q - 2 p & < \ell_2 - s - n < t \leq \ell_2 - s \leq q - 2.
\end{align*}
Moreover $t \ne q -p$ because of the parity of $t$. It also follows that $(q-t)/2$ is a positive integer because $q-t$ is even and $1 < q-t$.
\end{proof}

\subsection{Computing $\beta_j$ for $q+2 \le j\leq q+s$}\label{subsec:k0_3}
Now we use
$[e_M,[e_{m-M+1},e_n]]=0$,
which means equation $E(M,t)$ in terms of the sequence of $L$.
Again because all entries between $\beta_M$ and $\beta_m$ vanish,
equation $E(M,t)$ reads
\begin{align*}
0 & = \beta_{M} \beta_{m+1} - \beta_{M} (t \beta_m - \beta_{m+1}) = \beta_M (2 \beta_{m+1} - t \beta_m).
\end{align*}
Since $\beta_M \ne 0$ we obtain
\begin{displaymath}
\beta_m =(2/t) \beta_{m+1}.
\end{displaymath}
In particular, $\beta_{m+1}$ is also nonzero.

In this subsection we will use a number of instances of the equation
$[e_{M-d},[e_{n+h},e_n]]=0$,
for a range of small nonnegative integers $d$,
to find linear relations between $\beta_M$ and
a bunch of preceding entries of the sequence.
This will then allow us to determine those entries in terms of $\beta_M$ alone.
However, because of alternating signs in the corresponding equations
$E(M-d,h)$, we will need to proceed in two ways
according to the parity of $d$, thus writing $d=2j$ or $d=2j+1$.
In each case we wish to choose $h$ so that
$\beta_{M-d+h+n}$, equivalently written $\beta_{m+1-t-d+h}$,
which is the last entry of the sequence
involved in the corresponding equations $E(M-d,h)$, is either $\beta_m$
or $\beta_{m+1}$.
For this reason we have just related those
to each other in the previous paragraph.
The following lemma determines the range of values of $j$
for which this will work.

\begin{lemma}\label{lemma_t_2}
We have
$[e_{n+t+2j}, e_n] =0$ for $0 \le 2j \le s-2$.
If $\ell_2 = q-1$
then this equation holds over the extended range
$0 \le 2j \le s-1$.
\end{lemma}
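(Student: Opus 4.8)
The plan is to reduce the lemma to a position count within the first constituent. Recall from the bracket formula $[e_a,e_b]=\sum_i(-1)^i\binom{b-n}{i}\beta_{a+i}e_{a+b}$, specialized to $b=n$, that $[e_{n+h},e_n]=\beta_{n+h}e_{2n+h}$, an equivalence already recorded in the text. Consequently the asserted identities $[e_{n+t+2j},e_n]=0$ are equivalent to $\beta_{n+t+2j}=0$, and the cleanest route to the latter is to show that each index $n+t+2j$ in the prescribed range lies in the interval $[n,k)$ of leading zeros of the first constituent, where $\beta_i=0$ by~\eqref{eq:betas}. Since such an index is bounded above by $k-1<k\le M$, it lands strictly before the leading term $\beta_k$ and so cannot collide with $\beta_M$ or any later nonzero entry.

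First I would substitute the definition $t=k_0+\ell_2-s-n+1$ to obtain $n+t+2j=k_0+\ell_2-s+1+2j$, and then split according to the two admissible values of $\ell_2$ supplied by Lemma~\ref{lemma_t}. For $\ell_2=q$ this index equals $q+k_0-s+1+2j$, while for $\ell_2=q-1$ it equals $q+k_0-s+2j$. The lower bound is immediate in either case: because $t>q-2p>0$ by Lemma~\ref{lemma_t}, we have $n+t+2j>n$, so the entry $\beta_{n+t+2j}$ genuinely sits inside the leading block of zeros rather than at an undefined index below it.

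The heart of the matter is the upper endpoint, and this is where the two ranges $s-2$ and $s-1$ are forced. For $\ell_2=q$ the constraint $2j\le s-2$ yields $n+t+2j\le q+k_0-1=k-1<k$, whence $\beta_{n+t+2j}=0$; for $\ell_2=q-1$ the constraint $2j\le s-1$ yields the same bound $n+t+2j\le q+k_0-1=k-1<k$. The one-unit downward shift of the index when $\ell_2=q-1$ is precisely what creates room for the additional value $2j=s-1$: in the case $\ell_2=q$ one could not push to $2j=s-1$, since the index would then equal exactly $k=q+k_0$, where $\beta_k\neq 0$ and the conclusion would fail. I do not anticipate any genuine obstacle here beyond keeping the boundary arithmetic and the even parity of $2j$ straight; once the index has been located in $[n,k)$, the lemma follows at once from~\eqref{eq:betas}.
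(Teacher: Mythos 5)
Your proposal is correct and follows essentially the same route as the paper's proof: substitute the definition of $t$ to rewrite $n+t+2j$ as $k_0+\ell_2+2j-s+1$, check that the stated ranges for $2j$ place this index at most $k-1$ in each of the two cases $\ell_2=q$ and $\ell_2=q-1$, and conclude from the vanishing of $\beta_i$ for $n\le i\le k-1$. The extra remarks (the lower bound $n+t+2j>n$ and the observation that $2j=s-1$ would hit the index $k$ exactly when $\ell_2=q$) are harmless additions to the same argument.
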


\begin{proof}
Because $t=k_0 - s + \ell_2-n+1$, we have
\begin{align*}
n+t+2j =  k_0 + \ell_2 + (2 j - s+1).
\end{align*}
Consequently, $n+t+2j \le k-1$ if $\ell_2 = q-1$
and $2j \le s-1$ or $\ell_2 = q$ and $2j \le s-2$.
Because $[e_i, e_n] = 0$ for $n \le i \leq k-1$,
we reach the desired conclusion.
\end{proof}

We start with the case of odd $d$, where the resulting linear equation
for the sequence of $L$ will be slightly simpler.
According to Lemma~\ref{lemma_t_2} we have
$[e_{n + t+2j},  e_n] = 0$ for $0 \le 2j \le s-2$.
(To avoid confusion we will consider only later the possibly
extended range in case $\ell_2=q-1$.)
Hence for $j$ in this range we can use
$[e_{M-2j-1},[e_{n+t+2j},e_n]]=0$.
Because all entries between $\beta_M$ and $\beta_m$ vanish,
the corresponding equation $E(i,h)$ with $i=M-2j-1$ and $h=t+2j$ reads
\begin{equation*}
\beta_{m} \sum_{g=0}^{2j+1} (-1)^g \binom{h}{g} \beta_{i+g}  - (-1)^{h} \beta_{i} \beta_m = 0.
\end{equation*}
Because $\beta_m \not = 0$ and $h$ is odd we conclude
\begin{equation}\label{ex_eq_1}
 \sum_{g=0}^{2j+1} (-1)^g \binom{t+2j}{g} \beta_{M-(2j+1)+g} = - \beta_{M-(2j+1)} \qquad \text{for $0 \le 2j \le s-2$}.
\end{equation}

Similarly, in case of even $d$ we can use
$[e_{M-2j},[e_{n+t+2j},e_n]]=0$,
again for $0\le 2j\le s - 2$ according to Lemma~\ref{lemma_t_2}.
Here, however, we assume $j>0$ because $j=0$
gives us an equation which we have already used
to find $\beta_m = (2/t)\beta_{m+1}$.
Thus, for $0<2j  \le s - 2$, the corresponding relation $E(i,h)$
with $i=M-2j$ and $h=t+2j$ reads
\begin{align*}
\beta_{m+1} \left( \sum_{g=0}^{2j} (-1)^g \binom{h}{g} \beta_{i+g} \right) - \beta_{i} \left(h \beta_m - \beta_{m+1} \right) = 0.
\end{align*}
Because $\beta_m = (2/t)\beta_{m+1}$ we have
\begin{align*}
\beta_{m+1} \left( \sum_{g=0}^{2j} (-1)^g \binom{h}{g} \beta_{i+g}  \right)
 - \beta_{m+1} \beta_{i} \left( 1 + \frac{4j}{t} \right)  = 0
\end{align*}
and because $\beta_{m+1} \ne 0$ we find
\begin{equation}\label{ex_eq_2}
\sum_{g=0}^{2j} (-1)^g \binom{t+2j}{g} \beta_{M-2j+g} = \left( 1 + \frac{4j}{t} \right) \beta_{M-2j}  \quad \quad \text{for $0<2j \le s-2$.}
\end{equation}

We will now show that Equations~\eqref{ex_eq_1} and~\eqref{ex_eq_2}
completely determine
$\beta_{q+1}, \dots, \beta_{M}$
if $s$ is even, and
$\beta_{q+2}, \dots, \beta_{M}$
if $s$ is odd,
in terms of $\beta_M$ alone.
This is because those equations form a
homogeneous linear system of rank one less than the
number of its unknowns.
We state this fact in a more general form,
so as to be able to apply it again later
to a different setting.

\begin{lemma}\label{lemma:linear_system}
The system of homogeneous linear equations
\begin{equation*}
\begin{cases}
\displaystyle
\sum_{g=0}^{2j+1} (-1)^g \binom{t+2j}{g} \gamma_{2j+1-g} = -\gamma_{2j+1}
&\text{for $0 \le 2j \le s-2$},
\\[1ex]
\displaystyle
\sum_{g=0}^{2j} (-1)^g \binom{t+2j}{g} \gamma_{2j-g} = \left( 1 + \frac{4j}{t} \right) \gamma_{2j}
&\text{for $0<2j \le s-2$},
\end{cases}
\end{equation*}
over the field of $p$ elements and in the indeterminates
$\gamma_0,\ldots,\gamma_{2\lfloor s/2\rfloor-1}$,
is equivalent to the linear system
\begin{equation*}
\gamma_i=(-1)^i \binom{(q-t)/2}{i}\gamma_0
\qquad\qquad\text{for $0\le i\le 2\lfloor s/2\rfloor-1$.}
\end{equation*}
\end{lemma}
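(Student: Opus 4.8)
The plan is to prove, by induction on $i$, that every solution of the first system necessarily satisfies $\gamma_i=(-1)^i\binom{r}{i}\gamma_0$, where I abbreviate $r=(q-t)/2$; by Lemma~\ref{lemma_t} this is a positive integer, and since $q\equiv 0\pmod p$ we have $2r\equiv -t\pmod p$. Because the system is homogeneous I may treat $\gamma_0$ as a free parameter throughout. The key structural observation is that the two families of equations are \emph{triangular}: the equation indexed by $j$ in the first family reads $2\gamma_{2j+1}=(\text{a combination of }\gamma_0,\ldots,\gamma_{2j})$, while the one indexed by $j$ in the second reads $(4j/t)\gamma_{2j}=(\text{a combination of }\gamma_0,\ldots,\gamma_{2j-1})$. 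Both pivots are invertible modulo the odd prime $p$: the factor $2$ obviously, and $4j/t$ because $0<j<p$ and $t\not\equiv 0\pmod p$ (indeed $q-2p<t<q-1$ with $t\ne q-p$ forces $t-q\not\equiv 0\pmod p$). Hence each equation determines one new $\gamma_i$ from the previously determined ones, and a quick count shows the number of equations equals the number of non-free unknowns $\gamma_1,\ldots,\gamma_{2\lfloor s/2\rfloor-1}$. Consequently no consistency condition is left over, so the induction simultaneously shows that the displayed formula is a genuine solution and the unique one extending a given $\gamma_0$, which is exactly the asserted equivalence of the two systems.

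For the inductive step I would substitute the inductive hypothesis into whichever equation has $\gamma_i$ as its pivot. After isolating the pivot term, the remaining sum becomes $\gamma_0\sum_{g\ge 1}\binom{t+2j}{g}\binom{r}{i-g}$, which by Vandermonde's identity equals $\gamma_0\bigl(\binom{t+2j+r}{i}-\binom{r}{i}\bigr)$. Since every index appearing is at most $s-1<n<p$, each binomial coefficient modulo $p$ depends only on its upper argument modulo $p$, so I may replace $t+2j+r$ by $u+2j$ and $r$ by $-u$, where $u\equiv t/2\pmod p$. It then remains to verify two elementary identities over $\F_p$: the negation identity $\binom{u+2j}{2j+1}=-\binom{-u}{2j+1}$, which handles the odd-index (first-family) step and yields $\gamma_{2j+1}=-\binom{r}{2j+1}\gamma_0$; and the ratio identity $\binom{u+2j}{2j}=\tfrac{u+2j}{u}\binom{u+2j-1}{2j}$, equivalently $\binom{u+2j}{2j}-\binom{-u}{2j}=\tfrac{2j}{u}\binom{-u}{2j}$, which handles the even-index (second-family) step and yields $\gamma_{2j}=\binom{r}{2j}\gamma_0$ once one recalls $2j/u=4j/t$. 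Both identities hold as polynomial identities in $u$ because the relevant factorials are units modulo $p$.

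The main obstacle I anticipate is largely bookkeeping: keeping the $p$-adic reductions consistent, in particular confirming that the integer upper arguments $t+2j+r$ and $r$ reduce to $u+2j$ and $-u$, and that the alternating signs $(-1)^g(-1)^{i-g}$ collapse correctly in each parity case. The one genuinely substantive point is the invertibility of the even-index pivot $4j/t$, which rests on the sharpened bounds on $t$ from Lemma~\ref{lemma_t}; everything else is the two binomial identities together with Vandermonde. I would present the induction with the two parity cases side by side, flagging the single sign change that distinguishes them.
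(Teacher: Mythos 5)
Your proof is correct, and its skeleton --- the triangular structure of the system (which gives linear independence and uniqueness up to the free parameter $\gamma_0$), followed by Vandermonde convolution to collapse the sums --- is the same as the paper's. The differences are organizational and in the final binomial identity. The paper counts equations against unknowns to conclude the solution space is one-dimensional and then verifies the closed form by direct substitution of all the $\gamma_i$ at once, converting $\binom{(q+t)/2+2j}{2j+1}$ and $\binom{(q+t)/2+2j}{2j}$ into binomials with upper argument $(q-t)/2$ via the reflection congruence of Lemma~\ref{lucas_cor}; you instead run an induction in which each pivot ($2$ for odd indices, $4j/t$ for even ones, both correctly shown to be units since $t\not\equiv 0\pmod p$ by Lemma~\ref{lemma_t} and $0<j<p$) determines the next $\gamma_i$, and you convert the Vandermonde output using the elementary upper-negation identity $\binom{-u}{k}=(-1)^k\binom{u+k-1}{k}$ together with the observation that $\binom{a}{b}\bmod p$ depends only on $a\bmod p$ once $b<p$. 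These are equivalent in substance, but your reduction is marginally more robust: it needs only that the lower indices are at most $s-1<n<p$, whereas Lemma~\ref{lucas_cor} as stated requires both arguments below $q$, a hypothesis that is not entirely immediate for the upper argument $(q+t)/2+2j$ when $s$ and $j$ are large. Your two closing identities, the equation/unknown count, and the sign bookkeeping all check out.
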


\begin{proof}
The given linear system can be put into row-echelon form by
rearranging its equations, choosing alternately from either set,
so one sees that the equations are linearly independent.
They are in number of $2\lfloor s/2\rfloor-1$
(hence $s-1$ if $s$ is even, and $s-2$ if $s$ is odd).
Because that is precisely one less than the number
$2\lfloor s/2\rfloor$ of indeterminates,
the solutions form a one-dimensional space,
so it will be sufficient to check that setting
$\gamma_i=(-1)^i \binom{(q-t)/2}{i}$
for $0\le i\le 2\lfloor s/2\rfloor-1$
gives a particular solution.

Starting with the first set of equations,
for $0\le 2j\le s-2$ we find
\begin{align*}
\sum_{g=0}^{2j+1} (-1)^g \binom{t+2j}{g} \gamma_{2j+1-g}
&=
-\sum_{g=0}^{2j+1}\binom{t+2j}{g}
\binom{(q-t)/2}{2j+1-g}
\\=&
-\binom{(q+t)/2+2j}{2j+1}
=
-\binom{(q+t)/2+2j}{(q+t)/2-1}
\\\equiv&
\binom{(q-t)/2}{(q-t)/2-2j-1}
=
\binom{(q-t)/2}{2j+1}
=
- \gamma_{2j+1},
\end{align*}
where we have applied Vandermonde convolution
in the second line of the calculation,
and Lemma~\ref{lucas_cor}
in the third line
(where the congruence is modulo $p$).

The verification is similar
for the second set of equations, but with an
additional manipulation at the end.
Thus, for $0<2j\le s-2$ we find
\begin{align*}
\sum_{g=0}^{2j} (-1)^g \binom{t+2j}{g} \gamma_{2j-g}
&=
\sum_{g=0}^{2j} \binom{t+2j}{g}
\binom{(q-t)/2}{2j-g}
\\=&
\binom{(q+t)/2+2j}{2j}
=
\binom{(q+t)/2+2j}{(q+t)/2}
\\\equiv&
\binom{(q-t)/2-1}{(q-t)/2-2j-1}
=
\binom{(q-t)/2-1}{2j},
\end{align*}
where again we have applied Vandermonde convolution
in the second line of the calculation, and
Lemma~\ref{lucas_cor} in the third line.
Finally,
\[
\binom{(q-t)/2-1}{2j}
=
\frac{(q-t)/2-2j}{(q-t)/2}
\binom{(q-t)/2}{2j}
\equiv
\left( 1 + \frac{4j}{t} \right)
\gamma_{2j}
\]
completes our proof.
\end{proof}

Applying Lemma~\ref{lemma:linear_system}
to Equations~\eqref{ex_eq_1} and~\eqref{ex_eq_2},
with $\gamma_i=\beta_{M-i}$, gives us
$
\beta_{M-i}= (-1)^i \binom{(q-t)/2}{i} \beta_M
$
for $0 \le i \le 2\lfloor s/2\rfloor-1$.
Replacing $i$ with $s-i$,
these can be more conveniently written as
\begin{equation}\label{eq:system_sol_M}
\beta_{q+i}= (-1)^{s-i} \binom{(q-t)/2}{s-i} \beta_M
\end{equation}
for $1<i\le s$, and for $i=1$ as well if $s$ is even.
Thus, we have shown that
$\beta_{q+1}, \dots, \beta_{M}$
if $s$ is even, and
$\beta_{q+2}, \dots, \beta_{M}$
if $s$ is odd, are given by Equation~\eqref{eq:system_sol_M}
in terms of $\beta_M$ alone, which we know to be nonzero.
In the next subsection we will use this
to reach a contradiction,
under our hypothesis $2\le k_0\le n-2$.
The argument will be insufficient in a subcase of $k_0=2$,
for which we will need further considerations in following subsections.

\subsection{The general argument for a contradiction}
\label{subsec:k_large}

The coefficients $\beta_{q+i}$
determined by Equation~\eqref{eq:system_sol_M}
include $\beta_{q+k_0-1}$
except when $k_0=2$ and $s$ is odd.
However, $\beta_{q+k_0-1}=0$ by assumption,
and we will use this to reach a contradiction.
Thus, if $k_0 >2$, or if $k_0=2$ and $s$ is even,
then we have found
$\beta_{q+k_0-1} = \binom{(q-t)/2}{s-k_0+1} \beta_{q+s}$,
and this will give us the desired contradiction
provided that the binomial coefficient is not a multiple of $p$.

Because $(q-t)/2<p$ according to Lemma~\ref{lemma_t},
no $\binom{(q-t)/2}{s-i}$ is a multiple of $p$
for $0\le s-i\le(q-t)/2$.
Hence $\binom{(q-t)/2}{s-k_0+1}$ is not a multiple of $p$
as long as
$s-k_0+1\le(q-t)/2$, that is,
$2s \le q-t+2k_0-2$.
Because
$q-t+2k_0-2=q+k_0-\ell_2+n+s-3$
this condition reads
\begin{equation}\label{eq:sk03}
s \le q + k_0 - \ell_2 + n  -3.
\end{equation}
Because $s \le n-1$ and $\ell_2 \le q$,
Equation~\eqref{eq:sk03} is satisfied if $k_0 >2$,
or if $k_0=2$ and $s$ is even.
Thus, we obtain a contradiction in those cases.

If $k_0=2$, $s$ is odd, and $\ell_2 = q-1$,
then according to Lemma~\ref{lemma_t_2}
the ranges for $j$ in Equation~\eqref{ex_eq_1} and Equation~\eqref{ex_eq_2} extend to $0 \le 2j \le s-1$
and $0<2j \le s-1$, respectively.
Applying Lemma~\ref{lemma:linear_system}
with $s-1$ (which is even) in place $s-2$
gives us the values of $\gamma_i$ for $0\le i\le s$.
Consequently, in this case Equation~\eqref{eq:system_sol_M}
holds for $0\le i\le s$, and hence determines
$\beta_q$ and $\beta_{q+1}$ as well.
In particular, it implies that the latter is not zero
(because Equation~\eqref{eq:sk03} is satisfied),
and this again provides a contradiction.

In conclusion, we have found a contradiction
in all desired cases except for $k_0=2$
in case $s$ is odd and $\ell_2=q$.
To deal with that we need to extend our knowledge
of the coefficients $\beta_i$
to include the second constituent.

\subsection{Computing an initial portion of the second constituent}\label{subsec:second_const}

Recall that conditions~\eqref{eq:betas} hold.

Consider $j$ with $0 \le 2j \le s-2$.
According to Lemma~\ref{lemma_t_2} we have
$[e_{n+t+2j}, e_n] =0$.
Therefore, $E(i,h)$ with $i=M$ and $h=t+2j$ reads
\begin{align*}
   0 & = \beta_{m+2j+1} \sum_{g} (-1)^g \binom{h}{g} \beta_{M+g} - \beta_M \sum_{g} (-1)^g \binom{h}{g} \beta_{M+n+g} \\
   & = \beta_{m+2j+1} \beta_{M} - \beta_M \sum_{g} (-1)^{h-g} \binom{h}{h-g} \beta_{M+n+h-g}\\
   & = \beta_{m+2j+1} \beta_{M} + \beta_M \sum_{g} (-1)^{g} \binom{h}{g} \beta_{m+2j+1-g}.
\end{align*}
Because $\beta_M \ne 0$ we obtain
\begin{equation}\label{ex_eq_3}
    \sum_g (-1)^g \binom{t+2j}{g} \beta_{m+2j+1-g} = - \beta_{m+2j+1} \quad \text{for $0 \le 2j \le s-2$}.
\end{equation}

Now consider $j$ with $0 < 2j \le s-2$.
Then $E(i,h)$ with $i=M-1$ and $h=t+2j$ gives
\begin{align*}
   0 & = \beta_{m+2j} \sum_{g} (-1)^g \binom{h}{g} \beta_{M-1+g} - \beta_{M-1} \sum_{g} (-1)^g \binom{h}{g} \beta_{M-1+n+g} \\
   & = \beta_{m+2j} (\beta_{M-1} - h \beta_{M}) - \beta_{M-1} \sum_{g} (-1)^{h-g} \binom{h}{h-g} \beta_{M-1+n+h-g} \\
   & = \beta_{m+2j} \beta_{M-1} \left(1 + \frac{2h}{q-t} \right) + \beta_{M-1} \sum_{g} (-1)^g \binom{h}{g} \beta_{m+2j-g}.
\end{align*}
As in the previous case, because $\beta_{M-1} \ne 0$ we find
\begin{align*}
    0 & = \beta_{m+2j} \left( 1 + \frac{2t+4j}{-t} \right) + \sum_{g} (-1)^g \binom{t+2j}{g} \beta_{m+2j-g} \\
    & = \beta_{m+2j} \left( -1 - \frac{4j}{t} \right) + \sum_{g} (-1)^g \binom{t+2j}{g} \beta_{m+2j-g},
\end{align*}
namely
\begin{equation}\label{ex_eq_4}
    \sum_g (-1)^g \binom{t+2j}{g} \beta_{m+2j-g} = \beta_{m+2j} \left( 1 + \frac{4j}{t} \right) \quad \text{for $0 < 2j \le s-2$}.
\end{equation}

Applying Lemma~\ref{lemma:linear_system}
to Equations~\eqref{ex_eq_3} and~\eqref{ex_eq_4},
with $\gamma_i=\beta_{m+i}$, gives us
\begin{equation}\label{eq:system_sol_m}
\beta_{m+i}= (-1)^i \binom{(q-t)/2}{i} \beta_m \quad \text{for $0 \le i \le 2\lfloor s/2\rfloor-1$.}
\end{equation}

\subsection{Excluding $k_0=2$ with $s\neq n-1$.}\label{sec:2_n_1}
Recall from Subsection~\ref{subsec:k_large}
that we have reached a contradiction for $k_0>2$,
and also in some further cases.
Thus, from now on we assume $k_0=2$, $s$ odd and $\ell_2 = q$.
Note that this may only occur when $n$ is even,
because the first constituent length
$\ell=k+n-1=q+n+1$ is always even.
Our argument will leave out the extreme case where $s=n-1$,
which causes difficulties and will require separate
consideration in the final subsection.

Besides $M=q+s$ as before, here we have
$m=2q+2$ and
$t=q+3-s-n$.
Thus, $\beta_{q+s}$ is the trailing term of the first constituent, while $\beta_{2q+2}$ is the leading term of the second constituent.

To deal with this situation we expand vanishing
Lie brackets of the form $[e_r,e_r]$,
but we will need to distinguish two cases
according to the value of $s$.
If $2s \leq n$, then expanding the
right-hand side of the equation
$0 = [e_{q+s}, e_{q+s}]$ we get
\begin{align*}
0 & = \sum_{i} (-1)^i \binom{q+s-n}{i} \beta_{q+s+i} = \beta_{q+s},
\end{align*}
which contradicts our assumption $\beta_{q+s} \ne 0$.
This argument will not work for larger $n$
because the above sum will involve also $\beta_{2q+2}$ and
further, possibly nonzero entries of the second constituent.
Instead, if $2 s \ge n+2$ we expand the right-hand side of
$0 = [e_{q+s+1}, e_{q+s+1}]$ and find
\[
0 =\sum_{i} (-1)^i \binom{q+s+1-n}{i} \beta_{q+s+1+i}
=\pm \sum_i (-1)^i \binom{q+s+1-n}{q-s+1+i} \beta_{2q+2+i}.
\]
Here the sum does not involve the end of the first constituent,
but entries of the second, of which we have computed some
in the previous subsection in terms of $\beta_m$ alone.
Thus, using Equation~\eqref{eq:system_sol_m} we find
\[
0= \sum_i \binom{q+s+1-n}{2s-n-i}
\binom{(q-t)/2}{i} \beta_m.
\]
Note that here we are within the range of
validity $0\le i\le s-2$
of Equation~\eqref{eq:system_sol_m}
because the former binomial coefficient in the sum vanishes
unless $0\le i\le 2s-n$:
the latter range is contained the former
provided that $s\le n-2$, which is satisfied here because
we are assuming
$s\neq n-1$ in this subsection.
By Vandermonde convolution we find
\[
0= \binom{q+s+1-n+(q-t)/2}{2s-n} \beta_m,
\]
and because $\beta_m\neq 0$ a contradiction will follow once
we have checked that the binomial coefficient is not
a multiple of $p$.
In fact, that can be written as
\[
\binom{q+(3s-n-1)/2}{2s-n}
\equiv
\binom{(3s-n-1)/2}{2s-n}
\not\equiv 0
\pmod{p},
\]
where we have used Lucas' theorem after noting
$0<2s-n\le(3s-n-1)/2<p$.

\subsection{Excluding $k_0=2$ with $s = n-1$.}
In Subsection~\ref{sec:2_n_1} we dealt with the case $k_0=2$ and $s \ne n-1$.
In this section we will exclude the remaining case
where $k_0=2$ and $s =n-1$.
Recall that $n$ is even, and $n \ge 4$ because $2=k_0\le n-2$.
Moreover, $(q-t)/2 = n-2$.

Recall that $\beta_{2q+2}$ is the leading term of the second constituent, and that
\begin{equation}\label{eq:2q+2+i}
\beta_{2q+2+i}= (-1)^i \binom{n-2}{i} \beta_{2q+2} \quad \text{for $0 \le i \le n-3$}
\end{equation}
according to Equation~\eqref{eq:system_sol_m}.
We will now extend this information and prove the vanishing of a number of entries of the sequence past $\beta_{2q+n-1}$,
aiming at a final contradiction.

Note that the third constituent begins with $\beta_{2q+n+2}$.
According to Lemma~\ref{lemma:constituent_bound},
its length satisfies
$\ell_3 \ge\ell/2= (q+n+1)/2$,
which means $\beta_{2q+2+i} = 0$ for $n \le i < (q+n+1)/2$.
However, $2n$ falls within this range,
because $2n \le (q+n+1)/2-1$ is equivalent to $q-3n-1 \ge 0$,
which holds because
\begin{align*}
q-3n-1 & \geq p^2-3n-1  \ge (n+1)^2-3n-1 = n^2-n \ge 0.
\end{align*}
Consequently, we have found
\begin{equation}\label{2q2j2n}
\beta_{2q+2+i} = 0 \quad \text{for $n \le i \le 2n$.}
\end{equation}

We now show that the last two entries of the second constituent vanish, which means
$\beta_{2q+n} = \beta_{2q+n+1} = 0$.
In fact, because $\beta_{q+n}=0$ we have
\begin{align*}
0 =   [e_{q+n}, [e_n, z^{q}]] & = (\beta_{q+n} - \beta_{2q+n}) e_{2q+2n} = -\beta_{2q+n} e_{2q+2n},
\end{align*}
which implies $\beta_{2q+n} = 0$.
Similarly,
\begin{align*}
0  = [e_{q+n+1}, [e_n, z^{q+1}]] & = (-\beta_{2q+n+1} + \beta_{2q+n+2}) e_{2q+2n+2} = -\beta_{2q+n+1} e_{2q+2n+2}
\end{align*}
implies $\beta_{2q+n+1} = 0$ as well.

Now we prove by induction  that
\begin{equation}\label{2qn122i}
\beta_{2q+2n-1+2j} = \beta_{2q+2n+2j} = 0 \quad \quad \text{for $1 \le j \le \frac{q-2n+1}{2}$,}
\end{equation}
where we have already established the case $j=1$
within Equation~\eqref{2q2j2n}.
For $1 < j \le (q-2n+1)/2$ we have
$[e_{n+2j+1}, e_n]=0$. Hence, expanding Equation~\eqref{eq:eih} with $i=2q+n-1$ and $h=2j+1$, we get
\begin{align*}
\beta_{2q+2n+2j} \beta_{2q+n-1} - \beta_{2q+n-1} ((2j+1) \beta_{2q+2n-1+2j} - \beta_{2q+2n+2j}) = 0.
\end{align*}
Since $\beta_{2q+n-1}\neq 0$, we obtain
\begin{equation}\label{eq:2q56i_1}
2 \beta_{2q+2n+2j} - (2j+1) \beta_{2q+2n-1+2j}=0.
\end{equation}
Now we expand the equation $0 = [e_{q+h+n}, e_{q+h+n}]$ with $h = (n+2j)/2$. We get
\begin{equation}\label{eq:eqhn}
\begin{aligned}
    0 & = \sum_r (-1)^r \binom{q+h}{r} \beta_{q+h+n+r} \\
    & = \sum_{i}  (-1)^i \binom{h}{i} \left( \beta_{q+h+n+i} - \beta_{2q+h+n+i} \right)\\
    & = \pm (h \beta_{2q+2n+2j-1} - \beta_{2q+2n+2j}).
\end{aligned}
\end{equation}
In the course of this calculation we have split the first
summation range into two portions and used Lucas' theorem.
For the last step we have used
$\beta_{q+h+n+i}=0$  for $0 \le i \le h$,
due to
$q + n  \le  q+h+n+i
\le q+2h+n
=q+2n+2j
\le 2q+1$.
Thus, Equation~\eqref{eq:eqhn} gives us
$2\beta_{2q+2n+2j} = (n+2j) \beta_{2q+2n-1+2j}$.
Substituting this into Equation~\eqref{eq:2q56i_1} we find
$(n-1) \beta_{2q+2n-1+2j} = 0$,
whence $\beta_{2q+2n-1+2j} = 0$.
In turn, this yields $\beta_{2q+2n+2j} = 0$.
This completes the induction step, and proves
Equation~\eqref{2qn122i}.

So far we have proved
$\beta_{2q+n} = \dots = \beta_{3q+1}=0$.
We will need to compute two more entries of the sequence.
Because the above induction argument does not easily extend any further,
we resort to a slightly different calculation, expanding Equation $E(i,h)$ with $i=2q+n-2$ and $h=q-2n+4$:
\begin{align*}
0 & = \beta_{3q+2} (\beta_{2q+n-2} - (q-2n+4) \beta_{2q+n-1}) - \beta_{2q+n-2} (- \beta_{3q+2}) \\
& = 2  (\beta_{2q+n-2} + (n-2) \beta_{2q+n-1}) \beta_{3q+2}.
\end{align*}
Then, using Equation~\eqref{eq:2q+2+i} we get
\begin{align*}
0 & = 2  \left(\binom{n-2}{n-4} \beta_{2q+2}  - (n-2) \binom{n-2}{n-3} \beta_{2q+2} \right) \beta_{3q+2}\\
& = (n-2)(1-n) \beta_{2q+2} \beta_{3q+2}.
\end{align*}
Because $4 \le n < p$ we have $(n-2)(1-n)\beta_{2q+2} \ne 0$,
and $\beta_{3q+2} = 0$ follows.
Now note that the proof of Equation~\eqref{eq:2q56i_1} remains valid
when $j=(q-2n+3)/2$, in which case it reads
\[
2\beta_{3q+3}-(q-2n+4)\beta_{3q+2}=0,
\]
and $\beta_{3q+3} = 0$ follows as well.
In conclusion, we have shown
$\beta_{2q+n} = \dots = \beta_{3q+3} = 0$.

Now we are ready to produce a contradiction,
by expanding the equation $0 = \left[e_{q+h+n}, e_{q+h+n} \right]$ with
$h=(q+3-n)/2$.
Since this latter equation is analogous to Equation~\eqref{eq:eqhn} but with a different value for $h$,  we find
\begin{align*}
0 & = \sum_{i}  (-1)^i \binom{h}{i} \left( \beta_{q+h+n+i} - \beta_{2q+h+n+i} \right) \\
& = \pm \left( \binom{h}{h-1} \beta_{2q+2} - \beta_{2q+3} \right) \\
& = \pm \left(\frac{q+3-n}{2} \beta_{2q+2} + (n-2) \beta_{2q+2} \right) = \pm \frac{n-1}{2} \beta_{2q+2},
\end{align*}
where we have used Equation~\eqref{eq:2q+2+i} in the last line.
Regarding the summation in the second line
note that $\beta_{2q+h+n+i}=0$
for $0 \le i \le h$ due to
$2q + n  \le  2q+h+n+i \le 3q+3$.
In conclusion, the above calculation shows $\beta_{2q+2}=0$,
contradicting the fact that $\beta_{2q+2}$ is the leading term of the second constituent.
This contradiction completes the proof of
Theorem~\ref{thm:final_contradiction}.

\bibliography{References}

\def\cprime{$'$} \def\polhk#1{\setbox0=\hbox{#1}{\ooalign{\hidewidth
  \lower1.5ex\hbox{`}\hidewidth\crcr\unhbox0}}}
\begin{thebibliography}{CMN97}

\bibitem[AM07]{AviMat:A-Z}
M.~Avitabile and S.~Mattarei.
\newblock Thin loop algebras of {A}lbert-{Z}assenhaus algebras.
\newblock {\em J. Algebra}, 315(2):824--851, 2007.

\bibitem[CMN97]{CMN}
A.~Caranti, S.~Mattarei, and M.~F. Newman.
\newblock Graded {L}ie algebras of maximal class.
\newblock {\em Trans. Amer. Math. Soc.}, 349(10):4021--4051, 1997.

\bibitem[CN00]{CN}
A.~Caranti and M.~F. Newman.
\newblock Graded {L}ie algebras of maximal class. {II}.
\newblock {\em J. Algebra}, 229(2):750--784, 2000.

\bibitem[CVL00]{CVL00}
A.~Caranti and M.~R. Vaughan-Lee.
\newblock Graded {L}ie algebras of maximal class. {IV}.
\newblock {\em Ann. Scuola Norm. Sup. Pisa Cl. Sci. (4)}, 29(2):269--312, 2000.

\bibitem[CVL03]{CVL03}
A.~Caranti and M.~R. Vaughan-Lee.
\newblock Graded {L}ie algebras of maximal class. {V}.
\newblock {\em Israel J. Math.}, 133:157--175, 2003.

\bibitem[IMS21]{IMS}
Valentina Iusa, Sandro Mattarei, and Claudio Scarbolo.
\newblock Graded {L}ie algebras of maximal class of type {$p$}.
\newblock {\em J. Algebra}, 588:77--117, 2021.

\bibitem[Jur05]{Ju:maximal}
Giuseppe Jurman.
\newblock Graded {L}ie algebras of maximal class. {III}.
\newblock {\em J. Algebra}, 284(2):435--461, 2005.

\bibitem[Mat06]{Mat:binomial}
Sandro Mattarei.
\newblock Modular periodicity of binomial coefficients.
\newblock {\em J. Number Theory}, 117(2):471--481, 2006.

\bibitem[Mat21]{Mat:chain_lengths}
Sandro Mattarei.
\newblock Constituents of graded {L}ie algebras of maximal class and chains of
  thin {L}ie algebras.
\newblock {\em Comm. Algebra}, 2021.
\newblock http://dx.doi.org/10.1080/00927872.2021.1967368, in press,
  arXiv:2011.04110.

\bibitem[Sca14]{Sca:thesis}
Claudio Scarbolo.
\newblock {\em Graded {L}ie algebras of maximal class in characteristic $p$,
  generated by two elements of degree $1$ and $p$}.
\newblock PhD thesis, Trento, March 2014.

\bibitem[Sha94]{Sha:max}
A.~Shalev.
\newblock Simple {L}ie algebras and {L}ie algebras of maximal class.
\newblock {\em Arch. Math. (Basel)}, 63(4):297--301, 1994.

\bibitem[SZ97]{ShZe:narrow-Witt}
A.~Shalev and E.~I. Zelmanov.
\newblock Narrow {L}ie algebras: a coclass theory and a characterization of the
  {W}itt algebra.
\newblock {\em J. Algebra}, 189(2):294--331, 1997.

\bibitem[Ugo10]{Ugo:thesis}
Simone Ugolini.
\newblock {\em Graded {L}ie algebras of maximal class in positive
  characteristic, generated by two elements of different weights}.
\newblock PhD thesis, Trento, April 2010.

\end{thebibliography}

\end{document}